\documentclass[12pt,reqno,hidelinks]{amsart}
\usepackage{amssymb}
\usepackage{amsmath, amssymb, verbatim, url, mathrsfs} 
\usepackage{stmaryrd}
\usepackage{dutchcal}
\usepackage{tabularx}
\usepackage{mathtools}
\usepackage{verbatim}
\usepackage{amsthm}
\usepackage{framed}
\usepackage{cite}
\usepackage{wasysym}
\usepackage{upgreek}
\usepackage{color}
\usepackage[dvipsnames]{xcolor}
\usepackage{array}
\usepackage{tensor}
\usepackage{accents} 
\usepackage[T1]{fontenc}
\usepackage{dsfont}
\usepackage[colorlinks,linkcolor=blue,citecolor=blue]{hyperref}
\usepackage{enumerate}
\usepackage{enumitem}
\usepackage[normalem]{ulem}
\usepackage{longtable}
\usepackage{mathtools}
\usepackage{subcaption}  
\usepackage{ulem}
\usepackage{float}

\numberwithin{equation}{section}

\theoremstyle{definition} 

\newtheorem{proposition}{Proposition}[section]
\newtheorem{lemma}[proposition]{Lemma}
\newtheorem{corollary}{Corollary}[section]
\newtheorem{theorem}{Theorem}[section]

\newtheorem{remark}{Remark}[section]

\newtheorem*{theorem*}{Theorem}
\newtheorem*{mquestion*}{Main Question}
\newtheorem*{claim*}{Claim}

\newtheorem*{intuition*}{Intuition}

\newcommand{\vertiiii}[1]{{\left\vert\kern-0.25ex\left\vert\kern-0.25ex\left\vert\kern-0.25ex\left\vert #1 \right\vert\kern-0.25ex\right\vert\kern-0.25ex\right\vert\kern-0.25ex\right\vert}}
\newcommand{\vertiii}[1]{{\left\vert\kern-0.25ex\left\vert\kern-0.25ex\left\vert #1 \right\vert\kern-0.25ex\right\vert\kern-0.25ex\right\vert}}

\newcommand{\Rbb}{\mathbb{R}}

\newcommand{\AND}{{\quad\text{and}\quad}}

\newcommand{\p}[1]{
	\begin{pmatrix}
		#1
	\end{pmatrix}
}

\newcommand{\be}{\begin{equation}}
\newcommand{\ee}{\end{equation}}

\allowdisplaybreaks

\setlength{\hoffset}{-20mm}
\setlength{\voffset}{-17mm}

\setlength{\textwidth}{17cm}
\setlength{\textheight}{23.5cm}

\setlength{\marginparwidth}{25mm}

\begin{document}

	\title[Nonlinear Singularity-free cosmological solutions]{Existence and bounds of nonlinear singularity-free cosmological solutions in a string-inspired gravity}
	
	\author{Chihang He and Chao Liu}

	\address[Chihang He]{School of Mathematics and Statistics, Huazhong University of Science and Technology, Wuhan 430074, Hubei Province, China.}
	\email{hechihang@foxmail.com}

	\address[Chao Liu]{Center for Mathematical Sciences and School of Mathematics and Statistics, Huazhong University of Science and Technology, Wuhan 430074, Hubei Province, China.}
	\email{chao.liu.math@foxmail.com}

	\begin{abstract}  
		We provide a rigorous proof for the existence of  homogeneous,  isotropic and globally singularity-free cosmological solutions in Einstein-dilaton-Gauss-Bonnet (EdGB) gravity with exponential coupling. While numerical studies suggested such solutions exist, a formal proof remained elusive.  By employing a novel ``power identity method'' and overcoming significant challenges posed by the strong nonlinearities of the exponential coupling, which are not present in the quadratic coupling analyzed in our companion paper \cite{he2025proofssingularityfreesolutionsscalarization},  we establish a FLRW solution valid for all time $t\in(-\infty,+\infty)$, where the Hubble parameter remains positive and vanishes asymptotically, while the scalar field evolves monotonically. This result align with numerical simulations and offer a firm mathematical foundation for singularity-free cosmology in a string-inspired setting.

		\vspace{2mm}
		
		{{\bf Keywords:} FLRW spacetimes, Einstein-scalar system, singularity-free solution, Einstein-dilaton-Gauss-Bonnet cosmology, string-inspired gravity}
		
	\end{abstract}

	\maketitle

	\section{Introduction}
	
	In recent years, numerous modified gravity theories have been proposed to address unresolved problems in cosmology, such as the nature of dark energy and dark matter. Among them, the Einstein–dilaton–Gauss–Bonnet (EdGB) gravity is a string-inspired theory that arises explicitly from the low-energy effective action of superstring theory \cite{Clifton2012,Gross1987,NOVELLO2008}, and it has demonstrated success in several areas, including explaining cosmic acceleration \cite{Nojiri2005,Nojiri2006} and modeling cosmic inflation \cite{Kanti2015,Kanti2015a,Hikmawan2016}.

	A major goal of modified gravity theories is to construct cosmological models that evade singularities, which are unavoidable in classical general relativity and mark the theory’s breakdown (see, e.g.,  \cite{NOVELLO2008,Kawai1998,KalyanaRama1997,Wang2019a}). In the Einstein–scalar–Gauss–Bonnet (EsGB) framework, which describes the universe’s past and future evolution, only a few exact analytical solutions are currently known \cite{he2025proofssingularityfreesolutionsscalarization,Kanti2015,Kanti2015a,Hikmawan2016,NOVELLO2008}. The companion paper \cite{he2025proofssingularityfreesolutionsscalarization} and the present work together establish families of singularity-free solutions within the EsGB and EdGB frameworks. Specifically, the companion paper addresses the \textit{quadratic coupling} system, while this paper focuses on the \textit{exponential coupling} system, corresponding to dilaton-like scalar fields. \textit{We point out that the exponential coupling introduces several new analytical challenges that are absent in the quadratic coupling system.}

	This work is significant in three main aspects. 
	\begin{enumerate}[leftmargin=*]
		\item It provides a rigorous proof of the existence of singularity-free solutions in EdGB cosmology with exponential coupling, with results consistent with numerical simulations;
		\item It introduces a novel analytical technique, referred to as the \textit{power identity}, which is developed and applied in both the companion paper \cite{he2025proofssingularityfreesolutionsscalarization} and this work;
		\item The use of an exponential coupling function is motivated by the form of the dilaton coupling that appears in string-inspired effective actions. In this context, the scalar field in the EdGB model can be viewed as playing the role of a dilaton, which provides a well-grounded theoretical basis for constructing and analyzing singularity-free solutions in EdGB cosmology. 
	\end{enumerate}
	
	To the best of our knowledge, prior studies on this topic have been primarily numerical, and no rigorous analytical proof has been provided. The companion paper and the present one constitute the first rigorous mathematical demonstrations of singularity-free solutions in string-inspired cosmology, with results in full agreement with numerical findings.

	\subsection{Main Theorem}
	
	We consider the action (see, e.g., \cite{he2025proofssingularityfreesolutionsscalarization,Rizos1994,Kanti1999,Kanti2015,Kanti2015a,Kawai1999,Sberna2017a,Hikmawan2016})
	\begin{equation}\label{e:S}
		S_{\text{EsGB}} = \frac{1}{16\pi} \int d^4x \sqrt{-g} \left(\frac{1}{2} R - \frac{1}{2}\partial_\mu\phi\partial^\mu\phi - V_\phi - \lambda \frac{f(\phi)}{8} R^2_{\text{GB}} \right),
	\end{equation}
	where $R$ is the scalar curvature, $V_\phi$ the scalar potential,   $\lambda$ the coupling constant, $\phi$ the dilaton-like field, and $f(\phi)$ the coupling function.
	
	In this work, we focus on a free scalar field with positive coupling constant and an \textit{exponential coupling function} (i.e., EdGB gravity), normalized as
	\begin{equation*}
		V_\phi=0,\quad \lambda=1 \quad \text{and} \quad f(\phi) = e^\phi.
	\end{equation*}
	
	We consider a homogeneous and isotropic universe described by the \textit{Friedmann-Lemaître-Robertson-Walker (FLRW)} metric,
	\begin{equation}\label{e:FLRW}
		g(t) = -dt^2 + a^2(t) \left(  dr^2 + r^2(d\theta^2 + \sin^2\theta \, d\varphi^2) \right),
	\end{equation}
	where  
	\begin{equation}\label{e:aH}
		a(t):=a_0 e^{\int^t_0H(s)ds}
	\end{equation}
	is the scale factor, $H=\dot{a}/a$ denotes the Hubble parameter, and the dot represents a derivative with respect to time. Substituting the metric \eqref{e:FLRW} into the action \eqref{e:S} yields the system (see \S\ref{sec:2} for details).
	\begin{align}
		&3H^2-3e^\phi\dot \phi H^3=\frac{\dot \phi^2}{2},
		\label{eq:2.5} \\ 
		& 2\dot{H}+3H^2=-\frac{\dot{\phi}^2}{2}+2e^\phi \dot{\phi}H(H^2+\dot H)+e^\phi H^2(\dot \phi ^2+\phi \ddot{\phi}),
		\label{eq:2.4} \\
		&\ddot{\phi}=-3H\dot{\phi}-3e^\phi H^2(H^2+\dot{H}).
		\label{eq:2.6}
	\end{align} 
	Our analysis focuses on this nonlinear ODE system.

	\begin{theorem} \label{theorem:2.2}
		Suppose the initial data
		\begin{equation}\label{initial-data!!}
			(a_0,\beta,\alpha) := (a,H,\phi)|_{t=0} 
		\end{equation}
		satisfy 
		\begin{gather}
			a_0\in(0,+\infty) , \quad \alpha=0,\quad  \beta  \in \biggl(0,\frac{\sqrt{6}}{6}\biggr),  \label{eq:2.12!!}
		\end{gather} 
		and the condition\footnote{Note that \eqref{eq:2.5}, known as the \textit{Hamiltonian constraint}, is quadratic in 
			$\dot \phi$, giving two algebraic solution branches of $\dot\phi$. Only the negative branch \eqref{eq:2.11!!} leads to singularity-free solutions.}
		\begin{equation}
			\dot{\phi}(0) < 0, \label{eq:2.11!!}
		\end{equation} 
		then there exists a unique \textit{globally singularity-free, homogeneous, and isotropic FLRW solution} $(g,\phi)\in C^2((-\infty, +\infty))$, where $g$ is defined by \eqref{e:FLRW}, solving the EdGB field equations \eqref{eq:2.5}--\eqref{eq:2.6} with initial data \eqref{initial-data!!}.  
		Moreover, the Hubble parameter $H$ and dilaton-like field $\phi$ of this FLRW solution satisfy the following estimates (a schematic diagram is shown in Fig. \ref{fig:bound})
		\begin{enumerate}[leftmargin=*]
			\item For $t \in (-\infty,0)$, 	$H$ satisfies
		\begin{align*}
			H(t)< & \min\left\{ \left(e^{6 \gamma  t} \left(\frac{1}{\beta ^2}-\frac{4 \beta }{5 \gamma }+\frac{2 \theta }{15 \gamma ^2}\right)-\frac{2 (-6 \beta  \gamma +\theta +6 \gamma  \theta  t)}{15 \gamma ^2}\right)^{-\frac{1}{2} },  \;\frac{\gamma}{-3\beta^3 t+1} \right\}   , \\
			H(t)> &   \left[ \beta^{-1/4} - \frac{\beta^{-1/4}}{4(m+1)} \left( 1 - (-3\beta^3 t+1)^{m+1} \right) \right]^{-4}>0    , 
		\end{align*}
		and $\phi$ satisfies
		\begin{equation*} 
			\ln (-3\beta^3t+1)^2 < \phi < \frac{\sqrt{6} \gamma}{3\beta^3} \ln(-3\beta^3 t+1) +   \frac{2\beta^3}{4\gamma + \beta^3} \left[ (-3\beta^3 t+1)^{\frac{4\gamma + \beta^3}{\beta^3}} - 1 \right]  , 
		\end{equation*} 
			where 		\begin{equation*}
			\theta:= \frac{2\big(9\beta^{6}+\sqrt{9\beta^{6}+12}\,\beta^{3}\big)}{\big(\sqrt{9\beta^{6}+12}+3\beta^{3}\big)^{2}} , \quad 			\gamma =  \frac{3\beta^3+\sqrt{9\beta^6+12}}{2} \AND 	m =\frac{107 \sqrt{9 \beta ^6+12}+309 \beta ^3}{60 \beta ^3}  . 
		\end{equation*} 
			\item For $t \in (0,+\infty)$, $H$ satisfies
			\begin{gather*}
				\frac{1}{5t+\frac{1}{\beta}} < H(t) < \frac{1}{\frac{1}{2}t+\frac{1}{\beta}} 
				\intertext{and $\phi$ satisfies}
				-(12\beta^2+2\sqrt{6})\ln{\biggl(\frac{1}{2}\beta t+1\biggr)}<\phi(t)<-\ln{\left(1+\frac{3}{5}\left(\beta^2-\frac{1}{(5t+\frac{1}{\beta})^2}\right)\right)}.
			\end{gather*}
		\end{enumerate} 
	\end{theorem}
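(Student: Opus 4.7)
The plan is to recast \eqref{eq:2.5}--\eqref{eq:2.6} as an autonomous first-order ODE system in the pair $(H,\phi)$, establish local well-posedness, and then derive the global a priori bounds through a sequence of carefully chosen ``power identity'' comparison estimates. First I would solve the Hamiltonian constraint \eqref{eq:2.5}, which is quadratic in $\dot\phi$, for $\dot\phi$ as an explicit function of $(H,\phi)$; the sign condition \eqref{eq:2.11!!} selects the branch
\[
\dot\phi \;=\; -3e^{\phi}H^{3}-\sqrt{9e^{2\phi}H^{6}+6H^{2}}.
\]
Substituting this into \eqref{eq:2.4} yields $\dot H$ as a smooth rational expression in $(H,\phi)$, with denominators bounded away from zero for initial data $(\beta,0)$ with $\beta\in(0,\sqrt{6}/6)$. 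The Picard--Lindel\"of theorem then provides local $C^{2}$ existence and uniqueness, and a direct differentiation confirms that \eqref{eq:2.5} is propagated by the reduced flow, so the reduction is genuinely equivalent to the original EdGB system.

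The forward-in-time analysis ($t>0$) is the gentler regime. The target bounds $1/(5t+1/\beta)<H<1/(t/2+1/\beta)$ suggest working with the reciprocal power $H^{-1}$: computing $d(H^{-1})/dt=-H^{-2}\dot H$ and inserting the reduced expression for $\dot H$, I would aim to sandwich the right-hand side between the constants $1/2$ and $5$. Because $\dot\phi<0$ drives $\phi$ monotonically negative in forward time, the factors $e^{\phi}$ in the Gauss--Bonnet corrections decay and can be absorbed into the leading $H^{2}$ contribution via a standard barrier argument. With $H$ pinched, integrating the explicit formula for $\dot\phi$ against these two-sided bounds produces the stated bounds on $\phi$.

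The backward-in-time analysis ($t<0$) is where the exponential coupling genuinely bites. The preferred scale $\gamma=(3\beta^{3}+\sqrt{9\beta^{6}+12})/2$ is precisely the positive root of $\gamma^{2}-3\beta^{3}\gamma-3=0$, which is the characteristic equation one obtains by linearizing the reduced system about a power-law ansatz of the form $H\sim\gamma/(1-3\beta^{3}t)$ and $\phi\sim(\sqrt{6}\gamma/(3\beta^{3}))\ln(1-3\beta^{3}t)$. I would build comparison ODEs by forming weighted power identities of the type $d(H^{-q}\,w(t,\phi))/dt$ for suitable weights $w$ built from powers of $(-3\beta^{3}t+1)$ and of $e^{\phi}$, chosen so that the leading nonlinear terms collapse to a constant multiple of the weight itself, yielding explicitly integrable differential inequalities. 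The constants $\theta$ and $m$ in the statement are precisely those that render the sub/supersolution problem algebraically closed; the two families of bounds on $\phi$ and on $H$ must be run simultaneously in a bootstrap loop, since each enters the other's comparison ODE through the factor $e^{\phi}$.

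The main obstacle I expect is exactly closing this bootstrap for $t<0$: any overestimate of $\phi$ amplifies $e^{\phi}$, which through the Gauss--Bonnet coupling feeds the growth of $H$, which in turn drives $|\dot\phi|$ (whose size scales with $H$) and hence $\phi$ to grow still more. This self-reinforcing feedback is absent in the quadratic-coupling companion paper \cite{he2025proofssingularityfreesolutionsscalarization} and is the essential new difficulty of the exponential coupling. Taming it forces the comparison functions to have an algebraic form dictated by the feedback itself, which is the origin of the somewhat intricate constants $\theta$ and $m$ and of the transcendental lower bound on $H(t)$ for $t<0$ involving the exponent $m+1$. Once the a priori bounds are in place on the maximal interval of existence, a standard continuation argument extends the solution to all of $\mathbb{R}$, and the strict positivity of the lower bound on $H$ certifies the absence of singularities.
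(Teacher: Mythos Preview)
Your high-level architecture is right: reduce to a first-order system, local existence, forward/backward split, bootstrap for $t<0$, continuation. You also correctly identify the feedback loop in the $t<0$ regime as the essential difficulty and correctly interpret $\gamma$ as the root of $\gamma^2-3\beta^3\gamma-3=0$. But there is a genuine gap: what you call ``power identity comparison estimates'' (derivatives of weighted powers $d(H^{-q}w)/dt$) is not the paper's power identity. The paper's identity is the purely \emph{algebraic} relation
\[
\mathcal{P}:=H\Bigl(\bigl(1-H^2e^\phi+\tfrac{\dot H}{3H^2}\bigr)\dot\phi^{2}+4H^3\dot\phi\,e^\phi+3H^6e^{2\phi}\bigl(1+\tfrac{\dot H}{H^2}\bigr)\Bigr)=0,
\]
obtained by differentiating the constraint and eliminating $\ddot\phi$. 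This is then combined with a \emph{first-hit argument}: one defines auxiliary quantities such as $D_1=\dot H+5H^2$, $D_2=\dot H+\tfrac12 H^2$ (for $t>0$) and $D_3=\dot H-3H^4e^\phi+H^2$, $D_4=\dot H-\tfrac{12}{5}H^4e^\phi+3H^2$, $D_5=\dot H-H^2+3e^{-\phi}$ (for $t<0$), checks their sign at $t=0$, and then supposes for contradiction that some $D_\ell$ first vanishes at a time $T$. Substituting $D_\ell(T)=0$ into $\mathcal P(T)=0$ forces a strict sign on $\mathcal P(T)$, a contradiction.

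Your direct ``sandwich $d(H^{-1})/dt$ between $1/2$ and $5$'' does not obviously close even for $t>0$. The lower bound ($D_1>0$) can indeed be obtained by a direct algebraic manipulation after using the constraint, but the upper bound ($D_2<0$) requires knowing $H^2e^\phi<1/6$, which in the paper is itself a consequence of $D_2\le 0$ on $(0,T_{\max}]$; the first-hit framework is exactly what breaks this circularity. For $t<0$ the situation is worse: the sign-preservation of $D_3,D_4,D_5$ is the input that launches the hierarchical chain (in the variables $z=H^3e^\phi$, $v=He^\phi$, $p=H^{11/4}e^\phi$, etc.), and there is no evident way to obtain those three differential inequalities by a bare comparison/barrier argument of the type you sketch. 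Your bootstrap description is morally correct about the \emph{second} half of the $t<0$ analysis, but it presupposes the differential inequalities that only the power-identity/first-hit mechanism supplies.
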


	\begin{figure}[H]
		\centering
		\begin{subfigure}[b]{0.48\textwidth}
			\includegraphics[width=\linewidth]{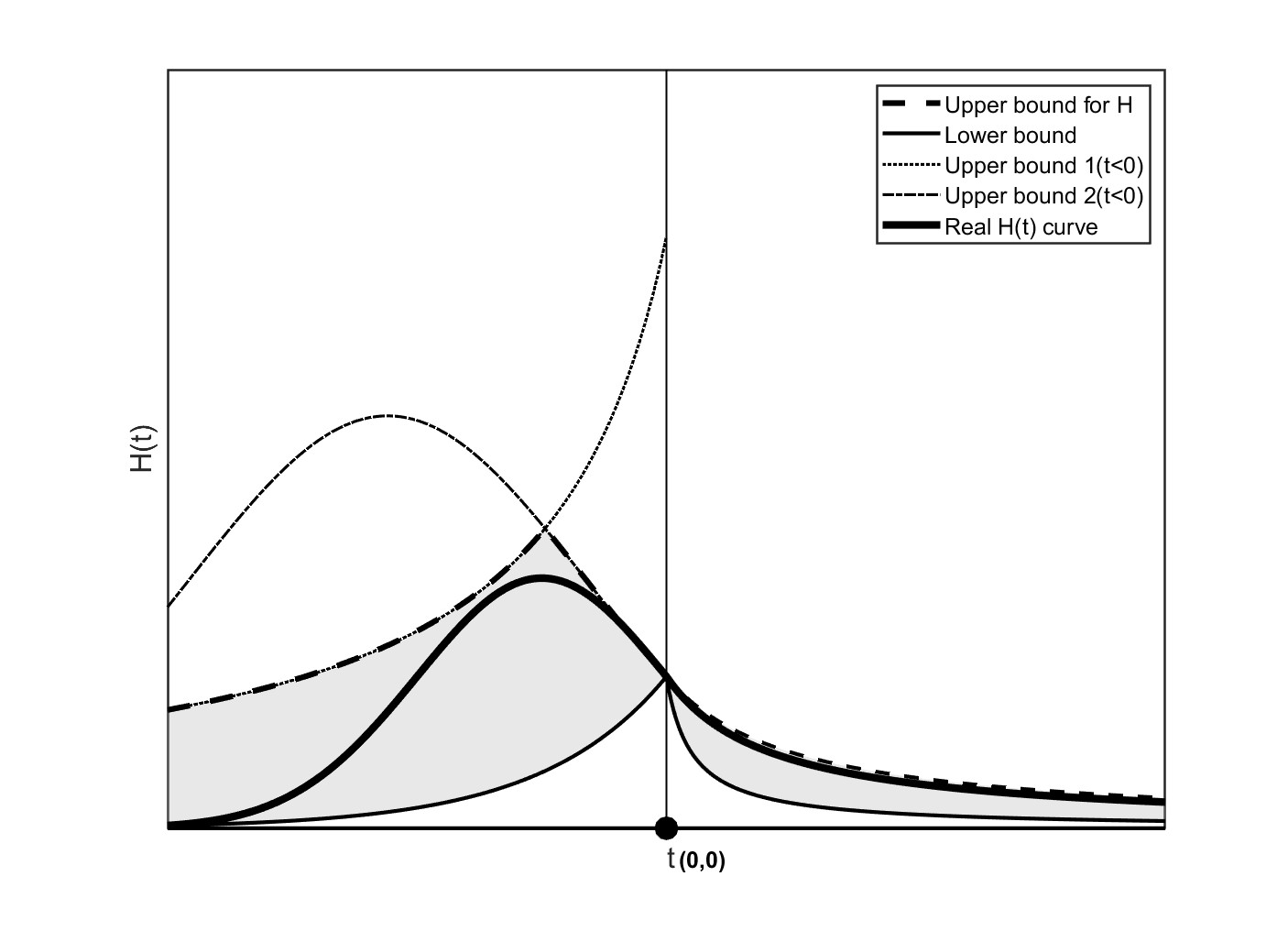}
			\caption{Bounds for $H$ } \label{fig:bound1}
		\end{subfigure}
		\begin{subfigure}[b]{0.48\textwidth}
			\includegraphics[width=\linewidth]{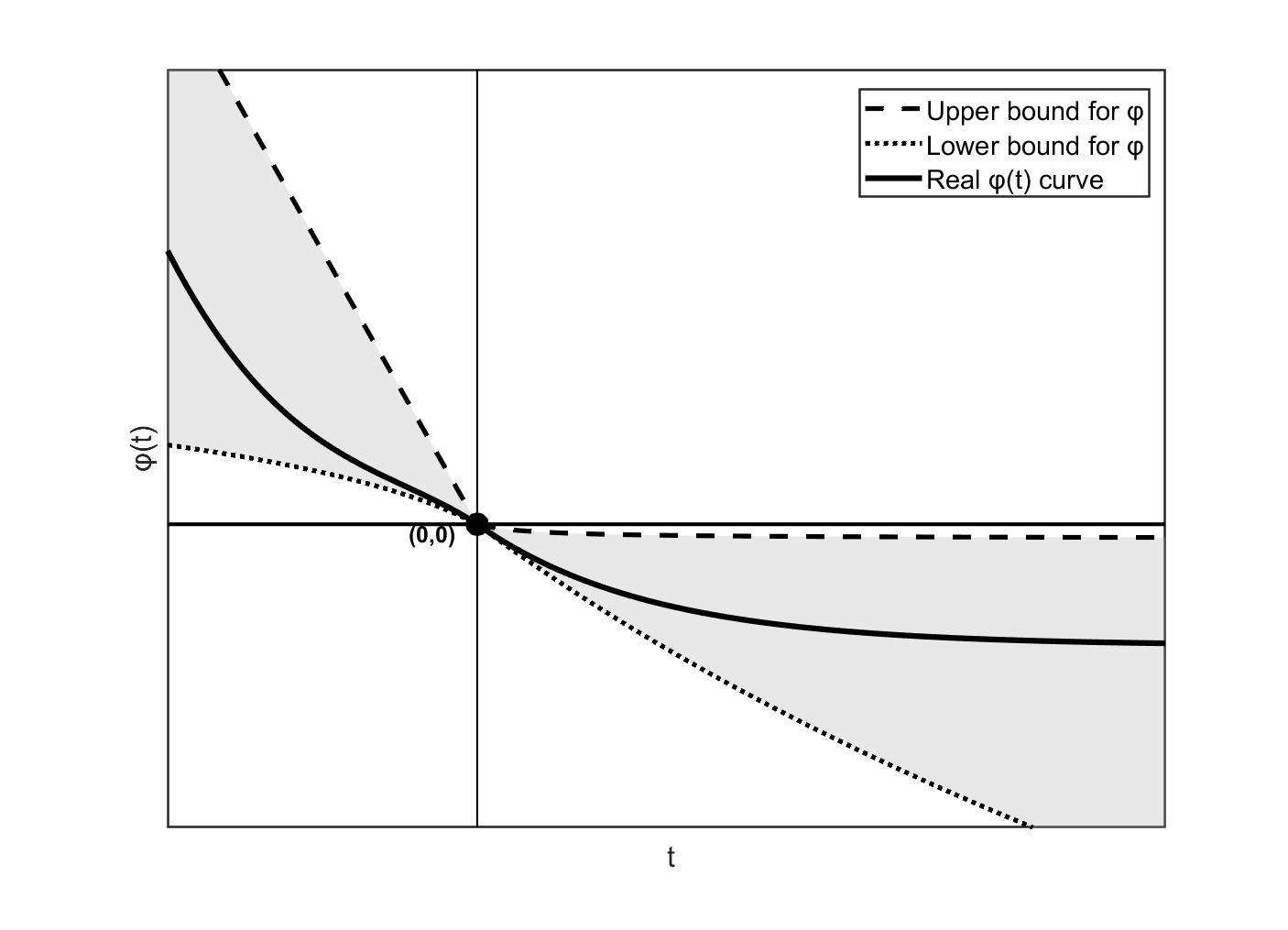}
			\caption{Bounds for $\phi$ }   \label{fig:bound2}
		\end{subfigure}
		\caption{Bounds for $H$ and $\phi$} \label{fig:bound}
	\end{figure}

	\begin{remark}
		In the schematic diagram shown in Fig.~\ref{fig:bound}, the shaded region corresponds to the estimates on $H$ and $\phi$ established in Theorem~\ref{theorem:2.2}, while the solid curves display the numerical solutions obtained with initial conditions $\alpha = 0$, $\beta = \tfrac{1}{5}$, and $\dot{\phi}(0) < 0$. The numerical results suggest that
		\begin{enumerate}
			\item solutions exist for all time $t \in (-\infty, +\infty)$;
			\item the Hubble parameter $H$ remains positive and asymptotically approaches zero as $t \to \pm\infty$;
			\item the dilaton-like field $\phi$ decreases monotonically and crosses zero at $t = 0$.
		\end{enumerate}
		The similar numerical findings can also be found in \cite[p.~81, Fig.~5.3]{Sberna2017a}.
	\end{remark}

	\subsection{Related Works}
	This work runs parallel to our companion article on singularity-free solutions and spontaneous scalarization in quadratic coupling theories \cite{he2025proofssingularityfreesolutionsscalarization}, but the case of exponential coupling presents additional technical challenges. While both studies use the power identity method to bound the Hubble parameter, the companion article focuses on scalarization in the quadratic regime, where the method allows \textit{decoupling the differential inequality for 
		$H$}. In contrast, the exponential coupling introduces more complicated nonlinearities, \textit{preventing such decoupling} and leading to more intricate nonlinear interactions.

	The exponential coupling in FLRW spacetime has been investigated in both previous numerical and linearized analytical studies.  For instance, numerical simulations for EdGB gravity in cosmological settings were presented in \cite{Sberna2017a}, and analytical investigations of string-inspired actions with coupling functions of the form 
	$e^{|\phi|}$
	(in contrast to our $e^\phi$ model) have established asymptotic solutions via linearization techniques \cite{Alexeyev2000, Kanti1999, Easther1996}.

	In the current work, we present a rigorous analytical proof addressing the fully nonlinear regime under exponential coupling.

	In addition, we point out that Eq. \eqref{eq:2.6} reveals a connection to the phenomenon of tachyonic instability. The linear tachyonic instability is also known as the Jeans instability in certain physical contexts. At the linear level, both instabilities arise from the same underlying mechanism, namely an effective negative mass squared in the perturbation equations. However, their nonlinear evolutions are dictated by the respective physical models, leading to distinct behaviors due to the differing nonlinear terms. A series of works on the nonlinear Jeans instability have been carried out by one of the authors (see \cite{Liu2022, Liu2022b, Liu2023a, Liu2023b, Liu2024}).

	\subsection{Outlines and Overview of Methods} 
		
	The main goal of this paper is to prove the existence of cosmological solutions in EdGB gravity that are free from the ``Big Bang'' singularity. This means the universe can be extended infinitely backward in time (as $t \rightarrow -\infty $) without the curvature becoming infinite.

	\subsubsection{Core Strategies and Key Techniques}\label{s:trategy}
	The proof starts with the equations describing cosmic evolution \eqref{eq:2.5}--\eqref{eq:2.6}. A key initial finding is that the rate of change of the scalar field, $\dot{\phi} $, can be directly expressed in terms of the Hubble parameter $H $ and the dilaton-like field $\phi$ itself (see \eqref{e:dotphi}). 
	This results in two possible initial branches, but the paper focuses on the ``negative branch'' where $ \dot{\phi} < 0 $, that is,\begin{equation}\label{e:dotphi0}
		\dot{\phi} = -3 H^3 e^{\phi} - \sqrt{(3 H^3 e^{\phi})^2 + 6 H^2}, 
	\end{equation}
	as it is promising for avoiding a singularity.
    
	\underline{Key tool: The power identity.}
	We derive a crucial structural identity (called the \textit{power identity})
	\begin{equation}\label{e:PI0} 
		\mathcal{P} := H\left( \left(1 - H^2 e^{\phi} + \frac{\dot{H}}{3 H^2}\right) \dot{\phi}^2 + 4 H^3 \dot{\phi} e^{\phi} + 3 H^6 e^{2\phi} \left(1 + \frac{\dot{H}}{H^2}\right) \right) = 0.
	\end{equation}
	This identity is not viewed as a differential equation but an algebraic relationship that must be satisfied by the solutions at all times. Its primary role is to act as a ``\textit{litmus test}'': at critical steps in the proof, by assuming that a desired property (e.g., $D_1:=\dot{H} + 5H^2>0$ on the interval of the existence) fails and substituting this assumption into the identity, one arrives at a contradiction like $0 < \mathcal{P} = 0$. This proves the initial assumption was false, meaning the good property must hold globally.
	
	\underline{The first hit argument.}
	This is the central technique used throughout the proof to demonstrate that certain quantities (like $ H(t) $) remain bounded for all time. The logic is straightforward, 
	\begin{enumerate}[label=\textbf{Step \arabic*:}, leftmargin=*, align=left]
		\item[\textit{Goal:}] Prove a quantity $ D_\ell(t) $ ($\ell=1,\cdots,5$) (e.g., $D_1:=\dot{H} + 5H^2$) is always \textit{positive} (or always negative).
		\item[\textit{Starting Point:}] First, show that $ D_\ell(0) > 0$ at the initial time (by $\dot{H}(0)$ or using the power identity).
		\item[\textit{Proof by Contradiction:}] Assume $ D_\ell(t)$ does not stay positive forever. Because these quantities change continuously, there must be a \textit{``first time''} $ T_{\text{max}} $ when $ D_\ell(t) $ hits zero.
		\item[\textit{Derive a Contradiction:}]  Evaluating the \textit{power identity} at $ t = T_{\text{max}} $ (i.e., $\mathcal{P}=0$) and plugging $D_\ell(T_{\text{max}} )=0$ into the \textit{power identity} (e.g., $D_1(T_{\text{max}} )=0$ implies $\dot H(T_{\text{max}})=-5H^2$) lead to an impossible conclusion (e.g., $ 0 < \mathcal{P} = 0 $).
		\item[\textit{Conclusion:}] Therefore, the ``first hit'' time $T_{\text{max}} $ cannot exist. So, $ D_\ell(t) $ must remain positive for all time. Then, for example, $D_1(t)>0$ implies a differential inequality that $\dot H>-5H^2$. Solving these differential inequalities derived from $D_\ell>0$, we are possible to conclude the estimates of $H$ in some situations (e.g. in the companion article \cite{he2025proofssingularityfreesolutionsscalarization} for $f(\phi)\sim \phi^2$, the quadratic coupling). However, there might be some new challenges in the current exponential coupling $f(\phi)=e^\phi$. 
		\item[\textit{New Challenges:}] 
		In the companion article \cite{he2025proofssingularityfreesolutionsscalarization}, the quadratic coupling allows us, via the power identity and the first-hit argument, to derive decoupled differential inequalities for $H$. This enables us to solve for $H$ first and subsequently for $\phi$. In contrast, the exponential coupling studied here presents a greater challenge. While a similar decoupling for $H$ remains possible in the future evolution ($t>0$), it fails for the past evolution ($t<0$), where the quantities $D_\ell$
		inherently depend on both $H$ and $\phi$. Consequently, a novel idea is required to analyze the backward-in-time behavior.
		 
	\end{enumerate}

	\subsubsection{The Two Regions of the Proof} 
\S\ref{sec:3} presents the main estimates for the solution. Since the behaviors in the future and past regimes differ substantially, we analyze them separately. Our aim is to prove the solution remains non-singular in the past and future direction by establishing upper and lower bounds for $H(t)$ and $\phi(t)$ for all $t\in(\mathcal{T}_-,\mathcal{T}_+)$. 

	\underline{(I) Analysis for $ t \in (0, \mathcal{T}_+) $ (Future Evolution)}
	The proof for the future is relatively direct, which is similar to the analysis in the companion \cite{he2025proofssingularityfreesolutionsscalarization}.
	\begin{enumerate}[leftmargin=*]
		\item Define two key quantities depending only on $H$
		\begin{equation*}
			D_1  = \dot{H} + 5H^2 \AND
			D_2  = \dot{H} + \frac{1}{2}H^2.
		\end{equation*}
		The motivation of selecting these definitions is they are engineered to interact productively with the power identity \eqref{e:PI0}. Their signs collectively control the growth of the solution.
		\item Use the power identity and the ``first hit argument'' to prove $ D_1(t) > 0 $ and $D_2(t) < 0 $ for all $ t \in (0, \mathcal{T}_{+}) $.
		\item By solving $ D_1(t) > 0 $ and $D_2(t) < 0 $ (Riccati-type), this directly implies that $ H(t) $ is ``sandwiched'' between two simple functions:
		\begin{equation*}
			\frac{1}{5t + \frac{1}{\beta}} < H(t) < \frac{1}{\frac{t}{2} + \frac{1}{\beta}}.
		\end{equation*}
		As $ t \rightarrow +\infty $, $ H(t) $ approaches zero smoothly, with no singularity.
		\item Once $H$ is determined, we can estimate $\phi$ using \eqref{e:dotphi0}.
	\end{enumerate}
	
	\underline{(II) Analysis for $ t \in (\mathcal{T}_-, 0) $ (Past Evolution)}
		The analysis for negative times is significantly more complex and constitutes the main technical challenge. The primary difficulty arises because $ \phi(t) \to +\infty $ as $ t \to -\infty $, making the exponential term $ e^{\phi} $ unbounded. The goal is to prove that despite this, $ H(t) $ remains positive and also vanishes asymptotically ($H \to 0 $ as $t \to -\infty $).
		
	\begin{enumerate}[leftmargin=*]
		\item \textit{Auxiliary Quantities:} Three more intricate quantities are defined, each intrinsically \textit{coupled} and involving both $H$ and $\phi$
		\begin{align*}
			D_3 &= \dot{H} - 3H^4 e^{\phi} + H^2, 
			\\
			D_4 &= \dot{H} - \frac{12}{5}H^4 e^{\phi} + 3H^2, 
			\\
			D_5 &= \dot{H} - H^2 + 3e^{-\phi}.  
		\end{align*}
In contrast, $D_1$ and $D_2$ depend \textit{only} on $H$. This \textit{additional coupling} in $D_3$, $D_4$, and $D_5$ makes the corresponding differential inequalities significantly more challenging to analyze.

		\item \textit{Sign Preservation of Auxiliary Quantities:} The power identity and the ``first hit  argument'' is again used to prove that these retain their specific signs ($ D_3 < 0 $, $ D_4 > 0 $, $ D_5 > 0 $) for all past time $ t \in (\mathcal{T}_{-}, 0) $.  This step is crucial as these inequalities provide the differential constraints necessary for the subsequent analysis (see Lemmas \ref{2}--\ref{3} for details).

		 \item \textit{Derivation of Differential Inequalities.} 
		 Using $ D_3 < 0 $, $ D_4 > 0 $, and $ D_5 > 0 $, we first obtain differential inequalities for $\dot{H} $ (e.g., 	$D_3  = \dot{H} - 3H^4 e^{\phi} + H^2<0$). However, because of the coupling and the \textit{unbounded} nature of $e^{\phi} $, these inequalities are difficult to solve directly. To address this issue, we examine the behavior of the composite quantities $H^3 e^{\phi} $, $ H e^{\phi} $, and $ H^{11/4} e^{\phi} $. Instead of attempting to control $ H $ through the unbounded exponential $ e^{\phi} $, our approach is to control $ H $ using the \textit{better-behaved} quantities $ H e^{\phi} $ and $ H^{11/4} e^{\phi} $, while controlling $ e^{\phi} $ through $ H^3 e^{\phi} $. Consequently, we introduce differential inequalities for $ H e^{\phi} $, $ H^{11/4} e^{\phi} $, and $ H^3 e^{\phi} $. These inequalities follow from $ D_3 < 0 $, $ D_4 > 0 $, and $ D_5 > 0 $, together with the bounds for $ \dot{\phi} $ provided in Corollary~\ref{dot phi}, namely
		$ -6 H^3 e^{\phi} - \sqrt{6}\,H < \dot{\phi} < -6 H^3 e^{\phi}$. 
		 This yields a coupled system of differential inequalities linking $ H $,  $\phi$, $ H e^{\phi} $, $ H^{11/4} e^{\phi} $, and $ H^3 e^{\phi} $ (see Step $1$ in the proof of Lemma \ref{t:Hphibd} for details).

		 \item \textit{Variable Transformation and Hierarchical Estimates.}
		 To handle the strong nonlinearity, a \textit{hierarchy of estimates} is constructed.
New variables are introduced to simplify the coupled system of differential inequalities, 
		 	\begin{align*}
		 		y  = H, \quad w = e^{\phi},  \quad z = H^3 e^{\phi},  \quad 
		 		v = H e^{\phi} \AND  p  = H^{11/4} e^{\phi}.
		 	\end{align*}
The system is presented in Step $2$ of the proof of Lemma \ref{t:Hphibd}. The inequalities are solved in a specific, \textit{hierarchical} order, using comparison theorems to obtain explicit bounds. Bounds for one variable are then used to establish bounds for the next, forming a sequential chain of estimates (see Fig. \ref{f:flowchart}). 
		 	\begin{enumerate}
		 		\item A lower bound for $z$ provides a lower bound for $w$ (and hence for $\phi$).
		 		\item The lower bound for $w$ helps establish an upper bound for $y$ (and thus $H$).
		 		\item This upper bound for $y$, combined with the lower bound for $z$, yields a lower bound for $v$.
		 		\item The lower bound for $v$, together with the upper bound for $y$, leads to an additional upper bound for $y$. Merging this with the first upper bound for $y$ gives an improved upper bound for $y$.  
		 		\item The upper bound for $y$ allows us to derive an upper bound for $z$. 
		 		\item The upper bound for $z$ and $y$ then lead to an upper bound for $w$ (equivalently, for $\phi$). 
		 		\item The lower bound for $z$ combined with the upper bound for $y$ yields an upper bound for $p$. 
		 		\item This upper bound for $p$ finally provides a lower bound for $y$ (that is, for $H$). 
		 		\item The process can be  continued, successively refining bounds for $y$ $z$, $w$ ($\phi$), $p$ and $v$. However, we do not pursue further refinements.
		 	\end{enumerate}
This step-by-step hierarchical strategy is crucial for handling the nonlinear coupling.

		 \item \textit{Final Bounds.}
		 	The hierarchical analysis culminates in Proposition \ref{proposition:8}, which provides explicit, time-dependent bounds valid for all $t \in (\mathcal{T}_-, 0)$.  
	\end{enumerate}

    	\begin{figure}[H]
	\centering 
		\includegraphics[width=0.9\textwidth]{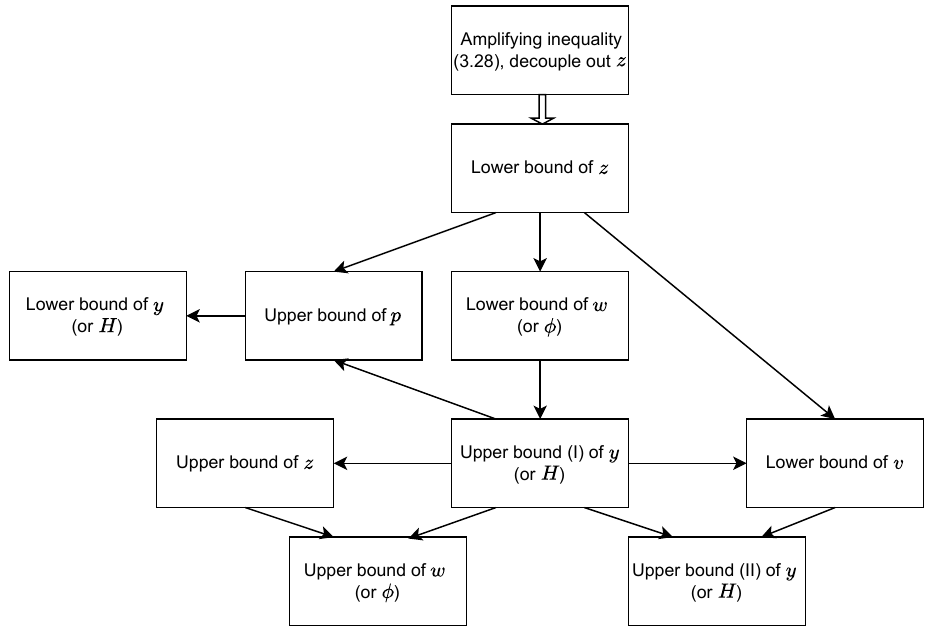}
		\caption{Hierarchical Estimates} \label{f:flowchart}  
\end{figure}
	
	\subsubsection{Global Existence}
	After obtaining upper and lower bounds for $ H(t) $ and $ \phi(t) $ on finite time intervals, \S\ref{sec:4} finally argues that the solution can be extended indefinitely into the infinite past and future, i.e., $ \mathcal{T}_{-} = -\infty $ and $ \mathcal{T}_{+} = +\infty $.
	
	\subsubsection{Additional Remarks}
	Compared to the quadratic coupling 
 $f(\phi)\sim \phi^2$
	discussed in \cite{he2025proofssingularityfreesolutionsscalarization}, whose symmetry 
$\phi \rightarrow -\phi$ lets us focus on a single branch of the constraint equation, the exponential coupling 
$e^\phi$
	generates two distinct branches. Numerical results indicate that singularity-free solutions appear only on the negative branch.

	\subsubsection{Summary}
	In summary, the argument relies on a key algebraic relation (the \textit{power identity}) together with a strategic \textit{first-hit contradiction argument}. This approach converts the task of controlling the solution into the task of showing that the signs of several auxiliary quantities $D_1, D_2, D_3, D_4, D_5 $ are preserved. By carefully deriving the differential inequalities dictated by these sign conditions, we ultimately show that the central cosmological quantity, the Hubble parameter $H(t)$, remains finite for all time, thereby establishing a singularity-free universe. In particular, we prove that as 
$ t \rightarrow \pm \infty $, the Hubble parameter  $H(t)$ stays finite and in fact approaches zero. The behavior of the scalar field $ \phi(t) $ is controlled as well. Thus the model avoids a Big-Bang-type singularity.

	\section{Constraints and local existence}
	\label{sec:2}
	We analyze the EdGB action \eqref{e:S} within the framework of a homogeneous and isotropic universe, described by the FLRW metric \eqref{e:FLRW}. Under this assumption, the scalar curvature and the Gauss–Bonnet term become 
	\begin{equation*}
		R = 6 \left( \frac{\ddot{a}}{a} + \left( \frac{\dot{a}}{a} \right)^2 \right) \AND R^2_{GB} = 24 \left( \frac{\dot{a}}{a} \right)^2 \frac{\ddot{a}}{a}. 
	\end{equation*} 
	The system then can be described by the following equations for the Hubble parameter
	\begin{equation*}
		H := \frac{\dot{a}}{a} ,
	\end{equation*}
	and the dilaton-like field $\phi$
	\begin{equation}
		3H^2-3e^\phi\dot \phi H^3=\frac{\dot \phi^2}{2}, \quad 
		\label{eq:2.5++}
	\end{equation}
	\begin{equation}
		2\dot{H}+3H^2=-\frac{\dot{\phi}^2}{2}+2e^\phi \dot{\phi}H(H^2+\dot H)+e^\phi H^2(\dot \phi ^2+\phi \ddot{\phi}),
		\label{eq:2.4+}
	\end{equation}
	\begin{equation}
		\ddot{\phi}=-3H\dot{\phi}-3e^\phi H^2(H^2+\dot{H})  . 
		\label{eq:2.6++}
	\end{equation}
	We point out that \eqref{eq:2.5++} is known as the Friedmann equation, which is also  the Hamiltonian constraint equation.

	\subsection{Initial data and the constraint}\label{s:data} 
	In analogy with the Cauchy problem for Einstein–scalar systems (cf.~\cite{ChoquetBruhat2009,Ringstroem2009}),  \eqref{eq:2.5++} serves as the \emph{Hamiltonian constraint}, which restricts the admissible initial data $(a,H,\phi,\dot{\phi})|_{t=0}$. Once satisfied at $t=0$, this constraint remains preserved for all $t$, as equations~\eqref{eq:2.4+}–\eqref{eq:2.6++} imply that
	\begin{equation}\label{e:dtctnt}
		\partial_t \left(3H^2-3e^\phi\dot \phi H^3-\frac{\dot \phi^2}{2}\right) = 0.
	\end{equation}
	Solving \eqref{eq:2.5++} for $\dot{\phi}$ yields  
	\begin{equation}\label{e:dotphi}
		\dot{\phi}  =   -3 H^3e^\phi +(-1)^\iota \sqrt{\left(3 H^3e^\phi\right)^2 + 6H^2}, \quad  (\iota=0,1 )  . 
	\end{equation}

	Therefore, specifying the initial data $(\beta,\alpha) := (H, \phi)|_{t=0}$ determines $\dot{\phi}|_{t=0}$ via \eqref{e:dotphi}, that is, 
	\begin{equation*}
		\dot{\phi} |_{t=0} 
		=  -3e^\alpha \beta^3 +(-1)^\iota \sqrt{\left(3e^\alpha \beta^3\right)^2 + 6 \beta^2}, \quad  (\iota=0,1 ).
	\end{equation*}
	This gives two admissible initial data sets:
	\begin{align}\label{e:dtset}
		(H,\phi,\dot{\phi})|_{t=0} =\begin{cases}
			(\beta,\alpha,-3e^\alpha \beta^3 + \sqrt{\left(3e^\alpha \beta^3\right)^2 + 6 \beta^2}), \\
			(\beta,\alpha,-3e^\alpha \beta^3 - \sqrt{\left(3e^\alpha \beta^3\right)^2 + 6 \beta^2}).
		\end{cases}
	\end{align}

	\subsection{Local existence}
	
	We begin with a useful estimate:
	\begin{lemma}\label{t:Dvalue}
		If $\dot{\phi}$ satisfies \eqref{e:dotphi} on some set $\mathcal{I}$, then for all $t \in \mathcal{I}$,
		\begin{equation*}
			2-2He^\phi\dot{\phi}+3H^4e^{2\phi} >0. 
		\end{equation*}  
	\end{lemma}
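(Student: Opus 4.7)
The plan is to prove this lemma by direct substitution of the explicit formula for $\dot{\phi}$ from \eqref{e:dotphi} into the expression and simplifying algebraically. This is a purely algebraic identity verification, so no first-hit argument or power identity is needed here, the lemma serves as a preliminary tool to be used later in Section \ref{sec:3}.

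First I would substitute $\dot{\phi} = -3H^3 e^\phi + (-1)^\iota \sqrt{9H^6 e^{2\phi} + 6H^2}$ into the left-hand side to obtain
\begin{equation*}
    2 - 2He^\phi \dot{\phi} + 3H^4 e^{2\phi} = 2 + 9H^4 e^{2\phi} - 2(-1)^\iota H e^\phi \sqrt{9H^6 e^{2\phi} + 6H^2} .
\end{equation*}
The worst case (smallest value of the right-hand side) occurs when the last term is subtracted, so it suffices to establish the stronger inequality
\begin{equation*}
    2 + 9H^4 e^{2\phi} > 2|H|\, e^\phi \sqrt{9H^6 e^{2\phi} + 6H^2} .
\end{equation*}
Both sides are nonnegative, so I would square them and reduce the problem to checking
\begin{equation*}
    (2 + 9H^4 e^{2\phi})^2 > 4H^2 e^{2\phi}\bigl(9H^6 e^{2\phi} + 6H^2\bigr) .
\end{equation*}

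Expanding and cancelling the common $36 H^8 e^{4\phi}$ terms yields
\begin{equation*}
    4 + 12 H^4 e^{2\phi} + 45 H^8 e^{4\phi} > 0 ,
\end{equation*}
which is manifestly true as a sum of nonnegative terms with a strictly positive constant. This establishes the inequality for both branches $\iota = 0, 1$ simultaneously and completes the proof. There is no real obstacle here; the only mild subtlety is recognizing that handling both branches simultaneously via the absolute value is cleaner than splitting into cases, and that the strict positivity of $4$ on the left ensures the inequality is strict even when $H = 0$.
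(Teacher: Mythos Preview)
Your proof is correct. Both your argument and the paper's are elementary algebraic verifications, but they proceed differently. The paper splits into cases $H=0$ and $H\neq 0$; in the latter it bounds $He^\phi\dot\phi$ via the inequality $\sqrt{1+x^2}<1+x$ (Lemma~\ref{t:bscineq}) to obtain $He^\phi\dot\phi<\sqrt{6}\,H^2 e^\phi$, and then completes the square to recognize $2-2\sqrt{6}H^2e^\phi+3H^4e^{2\phi}=(\sqrt{2}-\sqrt{3}H^2e^\phi)^2\ge 0$, with strictness inherited from the earlier strict bound. Your route instead substitutes $\dot\phi$ directly, handles both branches and both signs of $H$ simultaneously through the absolute value, and reduces the claim to a single polynomial inequality by squaring. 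Your approach is more self-contained (no case split, no appeal to the appendix lemma) and makes the strict positivity at $H=0$ transparent via the constant term $4$; the paper's approach, on the other hand, ties into the inequality toolkit used elsewhere and exhibits a clean perfect-square structure.
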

	
	\begin{proof}
		(1) If $H = 0$ at some $t \in \mathcal{I}$, then the expression reduces to $2-2He^\phi\dot{\phi}+3H^4e^{2\phi}=2 > 0$ at those points.
		
		(2) If $H \neq 0$ at $t \in \mathcal{I}$, then using \eqref{e:dotphi}, we have
		\begin{equation*}
			He^\phi\dot{\phi} = -3 H^4e^{2\phi} \pm \sqrt{\left(3 H^4e^{2\phi}\right)^2 + 6H^4e^{2\phi}}.
		\end{equation*} 
		Applying \eqref{eq:B2}, it follows that
		\begin{align*}
			He^\phi\dot{\phi}  = &  -3 H^4e^{2\phi} \pm \sqrt{\left(3 H^4e^{2\phi}\right)^2 + 6H^4e^{2\phi}} \notag \\
			\leq  &  -3 H^4e^{2\phi} + \sqrt{\left(3 H^4e^{2\phi}\right)^2 + 6H^4e^{2\phi}}<\sqrt{6}H^2e^\phi . 
		\end{align*}
		Substituting this into the expression yields
		\begin{equation*}
			2-2He^\phi\dot{\phi}+3H^4e^{2\phi}>	2-2\sqrt{6}H^2e^\phi+3H^4e^{2\phi} = (\sqrt{2}-\sqrt{3}H^2e^\phi)^2 \geq 0 . 
		\end{equation*}
		
		This completes the proof.
	\end{proof}

	\begin{proposition}[Local Existence]\label{t:locext} 
		Given initial data $(a_0, \beta, \alpha) := (a,H,\phi)|_{t=0}$, there exists a constant $T > 0$, such that the system \eqref{eq:2.4+}--\eqref{eq:2.6++} with this initial data admits a pair of solutions $(a,H,\phi,\dot{\phi}) \in C^1((-T,T), \mathbb{R}^4)$ corresponding to the initial data sets in \eqref{e:dtset}. 
	\end{proposition}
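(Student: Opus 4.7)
The plan is to recast the coupled second-order system as a first-order autonomous ODE on $\mathbb{R}^{4}$, apply the Picard--Lindel\"of theorem, and then use the constraint propagation identity \eqref{e:dtctnt} to ensure that each branch of \eqref{e:dtset} is preserved.

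\textbf{Step 1 (reduction to normal form).} First I would eliminate $\ddot\phi$ from \eqref{eq:2.4+} using \eqref{eq:2.6++}. Since \eqref{eq:2.4+} is affine in $\ddot\phi$ and \eqref{eq:2.6++} is affine in $\dot H$, a single substitution leaves an equation affine in $\dot H$ with coefficient
\begin{equation*}
2 - 2H e^{\phi}\dot\phi + 3H^{4}e^{2\phi}.
\end{equation*}
By Lemma~\ref{t:Dvalue} this coefficient is strictly positive whenever $\dot\phi$ satisfies \eqref{e:dotphi}, and in particular at $t=0$ for either choice of $\iota$ in \eqref{e:dtset}. Dividing through yields a smooth expression $\dot H = G(H,\phi,\dot\phi)$; back-substitution into \eqref{eq:2.6++} then expresses $\ddot\phi$ smoothly in terms of the same three variables. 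Adjoining $\dot a = aH$ and setting $u:=\dot\phi$, $y:=(a,H,\phi,u)$, the system takes the autonomous first-order form $\dot y = F(y)$ with $F\in C^{\infty}$ on an open neighborhood of the initial point.

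\textbf{Step 2 (local existence and branch locking).} The classical Picard--Lindel\"of theorem then provides, for each of the two admissible initial conditions in \eqref{e:dtset}, a unique $C^{1}$ solution on some interval $(-T,T)$. To verify that each solution remains on its own branch of \eqref{e:dotphi}, I would invoke \eqref{e:dtctnt}: the Hamiltonian constraint \eqref{eq:2.5++} is a first integral of \eqref{eq:2.4+}--\eqref{eq:2.6++}, so it is preserved throughout $(-T,T)$. Since the two branches in \eqref{e:dotphi} coincide only at $H=0$ (where the discriminant $(3H^{3}e^{\phi})^{2}+6H^{2}$ vanishes), shrinking $T$ if necessary so that $H$ stays close to $\beta\neq 0$ freezes $\iota$ at its initial value. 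This produces the pair of solutions claimed in the proposition.

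\textbf{Step 3 (main obstacle).} The sole non-routine analytic input is the nonvanishing of $2 - 2He^{\phi}\dot\phi + 3H^{4}e^{2\phi}$, which is precisely what permits the reduction to normal form and hence the use of Picard--Lindel\"of; Lemma~\ref{t:Dvalue} is tailored exactly to this need. Consequently I do not anticipate any substantial analytical difficulty here -- the proof amounts to packaging Lemma~\ref{t:Dvalue} with two classical facts (Picard--Lindel\"of and constraint propagation via \eqref{e:dtctnt}). The genuine difficulty of the paper lies in the global-in-time estimates of \S\ref{sec:3}, not in this local existence step.
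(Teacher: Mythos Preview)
Your proposal is correct and follows essentially the same route as the paper: substitute \eqref{eq:2.6++} into \eqref{eq:2.4+}, invoke Lemma~\ref{t:Dvalue} to guarantee the coefficient $2-2He^{\phi}\dot\phi+3H^{4}e^{2\phi}$ does not vanish, solve for $\dot H$, rewrite as a first-order autonomous system, and apply Picard--Lindel\"of. The only cosmetic differences are that the paper works with the three-dimensional system in $(H,\phi,\Phi)$ and recovers $a$ afterward via $\dot a=aH$, whereas you carry $a$ along from the start; and the paper does not include your branch-locking discussion here, deferring that issue to Proposition~\ref{lemma:3.1}. Your remark that $F$ is smooth on a \emph{neighborhood} of the initial point is in fact more precise than the paper's assertion that $F_1,F_2\in C^{1}(\mathbb{R}^{3})$, since the denominator can vanish off the constraint surface.
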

	\begin{proof} 
		Substitute \eqref{eq:2.6++} into \eqref{eq:2.4+}. Using Lemma \ref{t:Dvalue}, we obtain
		\begin{equation}\label{eq-dotH}
			\dot{H} = \frac{-4H^3 \dot{\phi}e^\phi - \dot{\phi}^2 + H^2e^\phi \dot{\phi}^2 - 3H^6 e^{2\phi}}{2-2He^\phi\dot{\phi}+3H^4e^{2\phi}}.
		\end{equation}  
		Let $\Phi:=\dot{\phi}$. 
		Then gathering  \eqref{eq-dotH}, $\dot{\phi} = \Phi$ and \eqref{eq:2.6++} together, the system becomes
		\begin{equation}\label{e:locsys} 
			\frac{d}{dt} \p{H\\\phi\\\Phi}= \p{ F_1(H,\phi,\Phi) \\
				\Phi \\ F_2(H,\phi,\Phi) },
		\end{equation}
		where
		\begin{align}
			F_1(H,\phi,\Phi):=& \frac{-4H^3 \Phi e^\phi - \Phi^2 + H^2e^\phi \Phi^2 - 3H^6 e^{2\phi}}{2-2He^\phi\Phi+3H^4e^{2\phi}},  \label{e:F1} \\
			F_2(H,\phi,\Phi):=& - 3\Phi H - 3H^2 e^\phi \left(H^2 + F_1(H,\phi,\Phi) \right). \label{e:F2} 
		\end{align}
		We can verify that $F_1, F_2\in C^1(\Rbb^3)$. By \S\ref{s:data}, the initial data set \eqref{e:dtset} provides two admissible choices. Applying Theorem~\ref{theorem:PL} and \eqref{e:aH} ($H=\dot{a}/a$), we obtain a unique solution $(a,H, \phi, \Phi) \in C^1((-T, T),  \mathbb{R}^4)$ to the system \eqref{e:locsys} for each choice of initial data. Hence, the original system \eqref{eq:2.4+}–\eqref{eq:2.6++} admits a pair of $C^1$ solutions $(a,H, \phi, \dot{\phi})$ on $(-T, T)$ corresponding to the initial data $(a_0,\beta,\alpha)$. This completes the proof. 
	\end{proof}

	\section{Estimates of the FLRW Solution in the EsGB System}
	\label{sec:3} 
	In the previous section, the local existence of a FLRW solution $( H, \phi, \dot{\phi}) \in C^1((-T, T), \mathbb{R}^3)$ to the EdGB system \eqref{eq:2.5}--\eqref{eq:2.6} with initial data $( \beta, \alpha)$ has been proven. In this section, we consider the bounds for this solution within its interval of existence. 
	
	We first denote $\mathcal{T}_- \in [-\infty, 0)$ and $\mathcal{T}_+ \in (0, +\infty]$ as the maximal backward and forward existence times of the FLRW solution, respectively.

	Before proceeding, we point out
	\begin{remark} 
		We set the initial condition as $\alpha = 0$. In this case, from \eqref{e:dotphi}, we obtain
		\begin{equation}\label{e:dotphsgn}
			\dot \phi(0)=\begin{cases}
				-3\beta^3 + \sqrt{9\beta^6 + 6 \beta^2}>0, \\
				-3\beta^3 - \sqrt{9\beta^6 + 6 \beta^2}<0.
			\end{cases}
		\end{equation} 
		To establish Theorem~\ref{theorem:2.2}, we focus on the \textit{negative branch} (i.e., $\iota = 1$) in \eqref{e:dotphi}, since in this case, by \eqref{e:dotphsgn}, the initial data are consistent with the condition given in \eqref{eq:2.11!!}. 
	\end{remark}

	\begin{proposition}
		\label{lemma:3.1}
		If $H(t)>0$ and $\dot{\phi}(0) < 0$, the solution $\phi$ satisfies
		\begin{equation*} 
			\dot{\phi}(t) = -3  H^3(t)e^{\phi(t)} - \sqrt{\left(3 H^3(t)e^{\phi(t)}\right)^2 + 6 H^2(t)}<0
		\end{equation*}
		for all $t \in (\mathcal{T}_-,\mathcal{T}_+)$. 
	\end{proposition}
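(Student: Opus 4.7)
The plan is to show that the Hamiltonian constraint forces $\dot\phi$ to lie on one of the two branches in \eqref{e:dotphi} at every time, that the two branches are continuously separated as long as $H>0$, and that the initial sign condition $\dot\phi(0)<0$ pins the solution to the negative branch.

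First I would note that by \eqref{e:dtctnt} the quantity $3H^{2}-3e^{\phi}\dot\phi H^{3}-\tfrac{1}{2}\dot\phi^{2}$ is conserved along any $C^{1}$ solution of \eqref{eq:2.4+}--\eqref{eq:2.6++}. Since the initial data satisfy the constraint \eqref{eq:2.5++} (this is the whole point of \S\ref{s:data}), it follows that \eqref{eq:2.5++} holds on the entire interval $(\mathcal{T}_-,\mathcal{T}_+)$, and hence the algebraic identity \eqref{e:dotphi} holds there as well. In other words, at every $t$, $\dot\phi(t)$ equals one of the two values
\[
\dot\phi_{\pm}(t):=-3H^{3}(t)e^{\phi(t)}\pm\sqrt{\bigl(3H^{3}(t)e^{\phi(t)}\bigr)^{2}+6H^{2}(t)}.
\]

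Next, since $H(t)>0$ throughout, I would observe that
\[
\dot\phi_{+}(t)-\dot\phi_{-}(t)=2\sqrt{\bigl(3H^{3}(t)e^{\phi(t)}\bigr)^{2}+6H^{2}(t)}\ge 2\sqrt{6}\,H(t)>0,
\]
so the two branches are separated by a strictly positive, continuous gap. Define the continuous function
\[
g(t):=\dot\phi(t)+3H^{3}(t)e^{\phi(t)}+\sqrt{\bigl(3H^{3}(t)e^{\phi(t)}\bigr)^{2}+6H^{2}(t)}.
\]
By \eqref{e:dotphi}, $g$ takes values in the two-point set $\{0,\,\dot\phi_{+}-\dot\phi_{-}\}$. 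Since $\dot\phi_{+}-\dot\phi_{-}\ge 2\sqrt{6}\,H>0$ and $g$ is continuous, $g$ cannot jump between these two values, hence $g$ is constant on $(\mathcal{T}_-,\mathcal{T}_+)$.

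Finally, I would evaluate $g$ at $t=0$: the hypothesis $\dot\phi(0)<0$ combined with $H(0)>0$ forces $\dot\phi(0)=\dot\phi_{-}(0)$, because $\dot\phi_{+}(0)=-3H(0)^{3}e^{\phi(0)}+\sqrt{(3H(0)^{3}e^{\phi(0)})^{2}+6H(0)^{2}}\ge 0$. Therefore $g(0)=0$, and by the constancy above $g\equiv 0$ on $(\mathcal{T}_-,\mathcal{T}_+)$, i.e.\ $\dot\phi(t)=\dot\phi_{-}(t)$ for all such $t$. The strict negativity $\dot\phi(t)<0$ is then immediate from $H(t)>0$, since $-3H^{3}e^{\phi}\le 0$ and $-\sqrt{(3H^{3}e^{\phi})^{2}+6H^{2}}<0$. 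The only delicate step is the continuity argument that rules out branch switching; this is routine once one observes that the separation $\dot\phi_{+}-\dot\phi_{-}$ stays uniformly bounded away from zero on any subinterval where $H$ is bounded below, which is guaranteed here by the standing assumption $H>0$.
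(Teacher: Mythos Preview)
Your proof is correct and follows essentially the same continuity argument as the paper: both use that $H>0$ keeps the two branches $\dot\phi_{\pm}$ strictly separated (the paper phrases this as $\dot\phi\neq 0$ with the branches having opposite signs), so the continuous $\dot\phi$ cannot switch branches, and the initial sign $\dot\phi(0)<0$ selects the negative one. Your closing caveat about needing $H$ to be uniformly bounded below is unnecessary---pointwise positivity $H>0$ already suffices, since the sets $\{t:g(t)=0\}$ and $\{t:g(t)=\dot\phi_{+}(t)-\dot\phi_{-}(t)\}$ are each closed and partition the connected interval $(\mathcal{T}_-,\mathcal{T}_+)$.
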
 
	\begin{proof} 
		Note the fact that $H(t)>0$ implies $\dot{\phi}(t)\neq 0$ and further
		\begin{equation*}
			\sqrt{\left(3 H^3(t)e^{\phi(t)}\right)^2 + 6 H^2(t)}>3  H^3(t)e^{\phi(t)}.
		\end{equation*}
		
		From \eqref{e:dotphi} and the fact $e^\phi>0$, we obtain for $t \in (\mathcal{T}_-, \mathcal{T}_+)$, 
		\begin{equation}\label{e:dphisgn}
			\dot{\phi}(t)
			\begin{cases}
				>0, \quad \text{for } \iota=0, \\
				<0, \quad \text{for } \iota=1. 
			\end{cases}
		\end{equation} 
		Since $\dot{\phi} \in C^1((\mathcal{T}_-, \mathcal{T}_+))$, the local existence theorem (Theorem \ref{t:locext}) guarantees that if $\dot{\phi}(0) < 0$, there exists a constant $T > 0$ such that $\dot{\phi}(t) < 0$ for $t \in (-T, T)$. 
		Therefore, $\dot{\phi}$ must be given by the $\iota = 1$ branch of \eqref{e:dotphi} throughout $(\mathcal{T}_-, \mathcal{T}_+)$; otherwise, a sign change would contradict the continuity of $\dot{\phi}$ implied by \eqref{e:dphisgn}. This completes the proof. 
	\end{proof}

	As we focus on the negative branch, $\dot{\phi}$ can be further constrained as follows
	\begin{corollary}
		\label{dot phi}
		If $H(t)>0$ and $\dot{\phi}(0) < 0$, then $\dot\phi$ satisfies
		\begin{equation*}
			-6H^3e^\phi-\sqrt{6}H<\dot{\phi}<-6H^3e^\phi . 
		\end{equation*}
	\end{corollary}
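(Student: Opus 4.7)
The plan is to start from the explicit formula
\begin{equation*}
\dot\phi(t) = -3H^3 e^\phi - \sqrt{(3H^3 e^\phi)^2 + 6H^2}
\end{equation*}
provided by Proposition~\ref{lemma:3.1}, which is available precisely because the hypotheses $H(t)>0$ and $\dot\phi(0)<0$ of the corollary match those of that proposition. Setting $A:=3H^3 e^\phi$ (which is strictly positive since $H>0$ and $e^\phi>0$), the target inequality becomes
\begin{equation*}
-2A-\sqrt{6}\,H \;<\; -A-\sqrt{A^2+6H^2}\;<\;-2A,
\end{equation*}
so the whole claim reduces to verifying $A<\sqrt{A^2+6H^2}<A+\sqrt{6}\,H$ pointwise in $t\in(\mathcal{T}_-,\mathcal{T}_+)$.

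For the upper bound on $\dot\phi$ I would simply note that $6H^2>0$ (because $H>0$) forces $\sqrt{A^2+6H^2}>\sqrt{A^2}=A$, which rearranges immediately to $\dot\phi<-2A=-6H^3e^\phi$. For the lower bound, since both $\sqrt{A^2+6H^2}$ and $A+\sqrt{6}\,H$ are strictly positive, squaring is a reversible operation and the inequality $\sqrt{A^2+6H^2}<A+\sqrt{6}\,H$ is equivalent to $A^2+6H^2<A^2+2\sqrt{6}\,AH+6H^2$, i.e.\ to $0<2\sqrt{6}\,AH$, which holds because $A>0$ and $H>0$. Rearranging then gives $\dot\phi>-2A-\sqrt{6}\,H=-6H^3e^\phi-\sqrt{6}\,H$.

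There is essentially no obstacle here: the result is a two-line algebraic manipulation of the closed-form expression from Proposition~\ref{lemma:3.1}, and the only subtlety is making sure the strict positivity of $A$ (equivalently, of $H$) is invoked so that both inequalities are strict rather than merely non-strict. Combining the two bounds yields
\begin{equation*}
-6H^3 e^\phi-\sqrt{6}\,H \;<\;\dot\phi\;<\;-6H^3 e^\phi,
\end{equation*}
which is the statement of Corollary~\ref{dot phi}.
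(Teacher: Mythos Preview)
Your proof is correct and follows essentially the same approach as the paper: the paper invokes Proposition~\ref{lemma:3.1} together with the elementary inequalities $\sqrt{1+x^2}>1$ and $\sqrt{1+x^2}<1+x$ from Lemma~\ref{t:bscineq}, which after the substitution $x=\sqrt{6}H/(3H^3e^\phi)$ amount exactly to your two pointwise bounds $A<\sqrt{A^2+6H^2}<A+\sqrt{6}\,H$. Your direct squaring argument is simply an inlined version of the same reasoning.
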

	\begin{proof}
		The result can be obtained immediately from Proposition \ref{lemma:3.1} and the inequalities \eqref{eq:B2} and \eqref{eq:B3} in Lemma \ref{t:bscineq}.
	\end{proof}

	Next, we present the main tool of this article. 
	\begin{lemma}[Power identity]
		\label{proposition:3.2++}
		The following identity holds for all $t\in(\mathcal{T}_-,\mathcal{T}_+)$:
		\begin{equation*} \mathcal{P}:=H\left(\left(1-H^2e^\phi+\frac{\dot H}{3H^2}\right)\dot{\phi}^2+4H^3\dot \phi e^\phi+3H^6e^{2\phi}\left(1+\frac{\dot{H}}{H^2}\right) \right) = 0. 
		\end{equation*}
	\end{lemma}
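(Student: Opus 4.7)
The plan is to derive the power identity $\mathcal{P}=0$ as a direct algebraic consequence of the Hamiltonian constraint \eqref{eq:2.5++} together with the expression \eqref{eq-dotH} for $\dot H$, which itself encodes equations \eqref{eq:2.4+} and \eqref{eq:2.6++}. Since $\mathcal{P}$ carries an explicit factor of $H$, the identity is trivial at any instant where $H=0$; I may therefore assume $H\neq 0$ and work with $\mathcal{P}/H$ pointwise.

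The first step is to expand $\mathcal{P}/H$ and collect terms by powers of $\dot H$, so that
\begin{align*}
\frac{\mathcal{P}}{H} = \bigl(\dot\phi^2 - H^2 e^\phi \dot\phi^2 + 4H^3 \dot\phi\, e^\phi + 3H^6 e^{2\phi}\bigr) + \dot H\cdot \frac{\dot\phi^2 + 9H^6 e^{2\phi}}{3H^2}.
\end{align*}
Next, I would invoke the Hamiltonian constraint \eqref{eq:2.5++} in the form $\dot\phi^2 = 6H^2 - 6H^3 \dot\phi\, e^\phi$ to simplify the coefficient of $\dot H$:
\begin{align*}
\frac{\dot\phi^2 + 9H^6 e^{2\phi}}{3H^2} = \frac{6H^2 - 6H^3 \dot\phi\, e^\phi + 9H^6 e^{2\phi}}{3H^2} = 2 - 2H e^\phi \dot\phi + 3H^4 e^{2\phi}.
\end{align*}
The key observation is that this is precisely the denominator appearing in \eqref{eq-dotH}.

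Finally, substituting \eqref{eq-dotH} yields
\begin{align*}
\dot H\,\bigl(2 - 2H e^\phi \dot\phi + 3H^4 e^{2\phi}\bigr) = -4H^3 \dot\phi\, e^\phi - \dot\phi^2 + H^2 e^\phi \dot\phi^2 - 3H^6 e^{2\phi},
\end{align*}
which is exactly the negative of the $\dot H$-independent part isolated in the first step. The two pieces therefore cancel term by term, so $\mathcal{P}/H=0$ and hence $\mathcal{P}=0$. There is no substantive analytical obstacle here; the identity is essentially a reformulation of the preservation of the Hamiltonian constraint already recorded as \eqref{e:dtctnt}. The only care required is the algebraic bookkeeping — recognizing that the coefficient of $\dot H$ in $\mathcal{P}$, once simplified using the constraint, matches the denominator of the solved-out formula for $\dot H$. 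This exact matching is what makes $\mathcal{P}$ so well adapted to the first-hit contradiction arguments employed later, where $\mathcal{P}=0$ acts as a rigid algebraic relation among $H,\phi,\dot\phi,\dot H$.
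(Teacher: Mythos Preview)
Your proof is correct. The route differs from the paper's: the paper derives the identity by expanding the constraint-preservation relation \eqref{e:dtctnt}, eliminating $\ddot\phi$ via \eqref{eq:2.6++}, and then using the constraint \eqref{eq:2.5++} to rewrite the coefficient $2\dot H(1-H\dot\phi e^\phi)$ as $\tfrac{\dot H}{3H^2}\dot\phi^2$. You instead run the argument in reverse, starting from $\mathcal{P}$ and verifying it vanishes by substituting the solved-out expression \eqref{eq-dotH} for $\dot H$. Your approach is a clean algebraic check that avoids differentiating the constraint, at the cost of relying on the already-derived formula \eqref{eq-dotH}; the paper's approach is more generative and makes the origin of $\mathcal{P}$ as a repackaging of constraint propagation explicit. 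One minor point: your claim that ``$\mathcal{P}$ carries an explicit factor of $H$, so the identity is trivial when $H=0$'' is slightly glib, since the bracket contains $\dot H/H^2$ terms; however, the constraint forces $\dot\phi=0$ whenever $H=0$, and a short limiting argument (or simply noting that $H>0$ in all later applications) disposes of this. The paper's proof has the same implicit division by $H^2$ and does not comment on it either.
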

	\begin{proof}
		We begin by expanding \eqref{e:dtctnt}, and then use \eqref{eq:2.6++} to eliminate the second derivative $\ddot{\phi}$. This yields
		\begin{equation*}
			H\left((1-H^2e^\phi)\dot{\phi}^2+4H^3\dot \phi e^\phi+3H^6e^{2\phi}\left(1+\frac{\dot{H}}{H^2}\right)+2\dot{H}(1-H\dot{\phi}e^\phi)\right) = 0. 
		\end{equation*}
		Finally, substituting the expression from \eqref{eq:2.5++} to replace $1-H\dot{\phi}e^\phi$ completes the derivation. 
	\end{proof}

	\subsection{Bounds of $H$ for $t \in (0,\mathcal{T}_+)$}
	We begin by establishing bounds for $H$ on the interval $t \in (0, \mathcal{T}_+)$. We define two useful quantities $D_1(t)$ and $D_2(t)$, 
	\begin{align}
		D_1:=&\dot{H}+5H^2; 
		\label{D_1}  \\
		D_2:=&\dot{H}+\frac{1}{2}H^2.
		\label{D_2}
	\end{align}

	In what follows, we aim to establish the lower bound of $H(t)$ for $t \in (0, \mathcal{T}_+)$ in Proposition \ref{proposition:5}.

	\begin{lemma}\label{lemma:3.2!}
		If there exists a constant $T_0 > 0$ such that $D_1(t) > 0$ for all $t \in (0, T_0)$, then
		\begin{equation*}
			H(t) > \frac{1}{5t + \frac{1}{\beta}}, \quad \text{for all } t \in (0,T_0).
		\end{equation*}
	\end{lemma}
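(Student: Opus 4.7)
The plan is to recognize that the hypothesis $D_1(t)>0$ is equivalent to the Riccati-type differential inequality $\dot H(t)>-5H(t)^2$, and then to integrate this inequality in reciprocal form to recover the desired pointwise lower bound on $H$. Before integrating, I need to make sure that $H$ stays strictly positive on $(0,T_0)$ so that division by $H^2$ is permitted and the reciprocal $1/H$ is smooth.

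I would establish this positivity by a short continuity (first-hit) argument in the spirit of Section~\ref{s:trategy}. Since $H(0)=\beta>0$ and $H\in C^1$, if $H$ failed to remain positive on $(0,T_0)$ there would be a least time $t_*\in(0,T_0)$ with $H(t_*)=0$ and $H(t)>0$ for $t\in[0,t_*)$. At such a first zero one must have $\dot H(t_*)\le 0$, while the hypothesis forces $\dot H(t_*)=D_1(t_*)-5H(t_*)^2=D_1(t_*)>0$, a contradiction. Hence $H>0$ throughout $(0,T_0)$.

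With positivity in hand I would divide $\dot H>-5H^2$ by $H^2>0$, rewrite the result as $\tfrac{d}{dt}\bigl(H^{-1}\bigr)<5$, and integrate from $0$ to $t$ using the initial value $H(0)=\beta$ to obtain $H(t)^{-1}<\beta^{-1}+5t$. Inverting this bound gives exactly the claimed estimate $H(t)>1/(5t+\beta^{-1})$ for all $t\in(0,T_0)$.

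I do not anticipate any genuine obstacle: once positivity is secured, the remainder is a one-line Riccati comparison. The only delicate point is the positivity step itself, and it is handled cleanly by the sign hypothesis on $D_1$ combined with continuity. In fact the lemma mainly serves to isolate, in its simplest form, the reciprocal-integration mechanism that (with different constants, and starting from $D_2<0$ rather than $D_1>0$) will be reused immediately afterwards to produce the matching upper bound on $H$ in the forward-time regime.
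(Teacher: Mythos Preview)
Your proposal is correct and follows essentially the same Riccati-inequality route as the paper: the paper simply writes $\dot H>-5H^2$, $H(0)=\beta>0$, and invokes the comparison theorem (Theorem~\ref{theorem:C}) against the exact solution $1/(5t+\beta^{-1})$, whereas you achieve the same thing by first securing $H>0$ via a first-hit argument and then integrating $d(H^{-1})/dt<5$ by hand. The only difference is packaging; the comparison theorem absorbs your positivity step into a single appeal.
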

	\begin{proof}
		The assumption implies that $H$ satisfies, for all $t \in (0,T_0)$, 
		\begin{align*}
			\dot{H}>-5H^2 \quad \text{and} \quad
			H|_{t=0}=\beta >0.
		\end{align*}  
		Integrating the above Riccati-type inequality, Theorem \ref{theorem:C} guarantees the following bound holds
		\begin{equation*}
			H(t) > \frac{1}{5t + \frac{1}{\beta}}, \quad \text{for all } t \in (0,T_0), 
		\end{equation*}
		which completes the proof. 
	\end{proof}

	\begin{lemma}\label{lemma:3.!}
		If $\beta>0$ and $D_1(0) > 0$, then $D_1(t) > 0$ for all $t \in (0, \mathcal{T}_+)$.
	\end{lemma}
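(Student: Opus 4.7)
The plan is to run the \emph{first-hit / power-identity contradiction} exactly in the template laid out in \S\ref{s:trategy}. Assume for contradiction that the conclusion fails. Since $D_1$ is continuous and $D_1(0)>0$ by hypothesis, there exists a first time
\[
T_{\max}:=\inf\{\,t\in(0,\mathcal{T}_+):D_1(t)=0\,\}\in(0,\mathcal{T}_+),
\]
with $D_1(t)>0$ for all $t\in[0,T_{\max})$ and $D_1(T_{\max})=0$. By Lemma~\ref{lemma:3.2!} applied on $(0,T_{\max})$, we get $H(t)>\tfrac{1}{5t+1/\beta}>0$ there, and by continuity $H(T_{\max})>0$. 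Consequently Proposition~\ref{lemma:3.1} applies at $T_{\max}$, giving $\dot{\phi}(T_{\max})<0$, and of course $e^{\phi(T_{\max})}>0$.

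Next I would substitute the ``first-hit'' identity $\dot H(T_{\max})=-5H^2(T_{\max})$ into the power identity of Lemma~\ref{proposition:3.2++}. This reads, at $t=T_{\max}$,
\[
\mathcal{P}(T_{\max})=H\!\left(\Bigl(-\tfrac{2}{3}-H^2e^{\phi}\Bigr)\dot{\phi}^2+4H^3e^{\phi}\dot{\phi}-12H^6e^{2\phi}\right).
\]
Now inspect signs term by term. The coefficient $-\tfrac{2}{3}-H^2e^{\phi}$ is strictly negative and $\dot{\phi}^2>0$, so the first term is $<0$. The second term $4H^3e^{\phi}\dot{\phi}$ is $<0$ because $H,e^{\phi}>0$ and $\dot{\phi}<0$. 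The third term $-12H^6e^{2\phi}$ is manifestly $<0$. Since $H(T_{\max})>0$, the whole expression $\mathcal{P}(T_{\max})$ is strictly negative, contradicting $\mathcal{P}\equiv 0$ from Lemma~\ref{proposition:3.2++}. Hence no such $T_{\max}$ exists, and $D_1(t)>0$ for all $t\in(0,\mathcal{T}_+)$.

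I do not expect any real obstacle here: the key algebraic miracle is that setting $\dot H=-5H^2$ makes \emph{all three} summands in the bracket of $\mathcal{P}$ have the same (negative) sign once we know $\dot\phi<0$ and $H>0$, which is exactly why the coefficient $5$ in the definition \eqref{D_1} was chosen. The only mild care needed is the order of invocations: one must apply Lemma~\ref{lemma:3.2!} \emph{before} Proposition~\ref{lemma:3.1} in order to secure $H(T_{\max})>0$, which is the hypothesis required to conclude $\dot\phi(T_{\max})<0$. Thus the worst that could go wrong is a subtle continuity issue at the endpoint $T_{\max}$, but the strict lower bound $H(T_{\max})\ge 1/(5T_{\max}+1/\beta)>0$ rules this out cleanly.
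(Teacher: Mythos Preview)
Your proof is correct and follows exactly the paper's first-hit/power-identity template. The only tactical difference is in the sign analysis of the bracket after substituting $\dot H=-5H^2$: the paper rewrites
\[
\Bigl(-\tfrac{2}{3}-H^2e^{\phi}\Bigr)\dot\phi^2+4H^3e^{\phi}\dot\phi-12H^6e^{2\phi}
=-H^2e^{\phi}\dot\phi^2-\tfrac{1}{3}\dot\phi^2-\Bigl(\tfrac{\sqrt{3}}{3}\dot\phi-2\sqrt{3}H^3e^{\phi}\Bigr)^{2},
\]
so negativity follows from a sum of nonpositive squares (strict since $H>0$), whereas you argue term-by-term using $\dot\phi<0$ via Proposition~\ref{lemma:3.1}. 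The paper's square-completion is slightly cleaner in that it matches the lemma's stated hypotheses exactly (only $\beta>0$ and $D_1(0)>0$, no branch assumption $\dot\phi(0)<0$), while your route implicitly borrows the standing negative-branch hypothesis of \S\ref{sec:3}; in the context of the paper this is harmless, but worth being aware of.
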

	\begin{proof}
		Since $D_1(0) > 0$ and $D_1(t)$ is continuous (by the definition \eqref{D_1} and Proposition \ref{t:locext}), there exists a constant $T\in(0, \mathcal{T}_+]$, such that $D_1(t)>0$ for all $t\in(0, T)$. Define
		\begin{equation}\label{e:Tmax!!}
			T_\text{max}:=\sup\{T\in(0,\mathcal{T}_+] \;|\;D_1(t)>0 \;\;\text{for all } t \in (0, T)\}.
		\end{equation}
		
		To prove the lemma, it suffices to show that $T_{\text{max}} = \mathcal{T}_+$. Suppose, for contradiction, that $T_{\text{max}} < \mathcal{T}_+$. Then because of the continuity of $D_1$, we must have $D_1(T_{\text{max}}) = 0$; otherwise, this would contradict the definition of $T_{\text{max}}$ in \eqref{e:Tmax!!}.
		
		By Lemma \ref{lemma:3.2!}, the positivity of $D_1$ on $(0,T_{\text{max}})$ implies
		\begin{equation*}
			H(t) > \frac{1}{5t + \frac{1}{\beta}}, \quad \text{for all } t \in (0, T_{\text{max}}).
		\end{equation*} 
		Taking the limit as $t \to T^-_{\text{max}}$ and using continuity of $H$, we obtain
		\begin{equation*}
			H(T_{\text{max}}) \geq \frac{1}{5T_{\text{max}} + \frac{1}{\beta}} > 0,
		\end{equation*}
		where the last inequality follows from the fact that $T_{\text{max}} <\mathcal{T}_+\in(0,+\infty]$.
		
		Next, evaluate the power identity at $t = T_{\text{max}}$. By Lemma \ref{proposition:3.2++}, we have $\mathcal{P}(T_{\text{max}}) = 0$, and taking $D_1(T_{\text{max}}) = 0$ into account yields 
		\begin{align}
			0=&\mathcal{P}(T_{\text{max}} )=\left. H \Biggl( \biggl(-H^2e^\phi-\frac{2}{3}\biggr)\dot{\phi}^2+4H^3\dot{\phi}e^\phi-12H^6e^{2\phi} \Biggr) \right|_{t=T_{\text{max}} } \notag \\
			=&\left. H \Biggl( -H^2e^\phi\dot{\phi}^2-\frac{1}{3}\dot{\phi}^2-\biggl(\frac{\sqrt{3}}{3}\dot{\phi}-2\sqrt{3}H^3e^\phi\biggr)^2  \Biggr) \right|_{t=T_{\text{max}} } <0 . \label{e:0>0}
		\end{align}  
		Since all the terms in the bracket are negative and $H(T_{\text{max}}) > 0$, the entire expression is strictly negative, leading to the contradiction \eqref{e:0>0}. 
		Hence, our assumption that $T_{\text{max}} < \mathcal{T}_+$ must be false, and we conclude that $T_{\text{max}} = \mathcal{T}_+$. This completes the proof. 
	\end{proof}

	\begin{lemma} \label{t:dataB!}
		Under the initial conditions \eqref{eq:2.12!!}--\eqref{eq:2.11!!}, we have $D_1(0) > 0$.
	\end{lemma}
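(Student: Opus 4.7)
The plan is a direct computation at $t=0$. Using the explicit formula \eqref{eq-dotH} for $\dot H$ together with the initial values $H(0)=\beta$, $\phi(0)=\alpha=0$ (hence $e^{\phi(0)}=1$), and writing $P:=\dot\phi(0)$, I would obtain
\begin{equation*}
\dot H(0)=\frac{-4\beta^{3}P-P^{2}+\beta^{2}P^{2}-3\beta^{6}}{2-2\beta P+3\beta^{4}}.
\end{equation*}
By Lemma \ref{t:Dvalue}, the denominator is strictly positive, so proving $D_{1}(0)=\dot H(0)+5\beta^{2}>0$ is equivalent to showing that the numerator of $\dot H(0)+5\beta^{2}$, namely
\begin{equation*}
N(\beta,P):=(\beta^{2}-1)P^{2}-14\beta^{3}P+12\beta^{6}+10\beta^{2},
\end{equation*}
is positive under the assumption $P<0$ (which is guaranteed by \eqref{eq:2.11!!} and the negative-branch choice in \eqref{e:dotphi}).

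The main obstacle is that the coefficient $(\beta^{2}-1)$ of $P^{2}$ is negative, since $\beta\in(0,\sqrt{6}/6)$ forces $\beta^{2}<1$, so one cannot conclude positivity of $N$ by inspection. The key idea is to exploit the Hamiltonian constraint \eqref{eq:2.5++} evaluated at $t=0$, which gives the algebraic identity
\begin{equation*}
P^{2}=6\beta^{2}-6\beta^{3}P.
\end{equation*}
Substituting this into $N(\beta,P)$ eliminates the $P^{2}$ term and collapses the expression to
\begin{equation*}
N(\beta,P)=-2\beta^{3}P(3\beta^{2}+4)+12\beta^{6}+6\beta^{4}+4\beta^{2}.
\end{equation*}
Since $\beta>0$ and $P<0$, the first term is strictly positive, and the remaining polynomial in $\beta$ is manifestly positive. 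Therefore $N(\beta,P)>0$, and combining with the positivity of the denominator yields $D_{1}(0)>0$, completing the argument.

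I expect no serious technical difficulty beyond spotting that the Hamiltonian constraint is the natural tool for reducing the quadratic-in-$P$ obstruction to a linear-in-$P$ expression with favorable signs. Notably, no use of the upper bound $\beta<\sqrt{6}/6$ is required for this particular step; the argument works for any $\beta>0$ with the negative branch of $\dot\phi(0)$.
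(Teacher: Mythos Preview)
Your argument is correct and follows essentially the same route as the paper: both compute $D_1(0)$ via \eqref{eq-dotH}, obtain a positive denominator from Lemma~\ref{t:Dvalue}, and use the Hamiltonian constraint \eqref{eq:2.5++} to simplify the numerator. The only minor difference is in the final positivity step: the paper rewrites the numerator as the sum of squares $H^2\dot\phi^2+\tfrac{1}{3}\dot\phi^2+\bigl(\tfrac{\sqrt{3}}{3}\dot\phi-2\sqrt{3}H^3\bigr)^2$, which shows $D_1(0)>0$ without invoking the sign of $\dot\phi(0)$, whereas you reduce to a linear-in-$P$ expression and then use $P<0$; both are valid, and the paper's version is marginally stronger since it does not need \eqref{eq:2.11!!}.
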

	
	\begin{proof}
		Substituting \eqref{eq-dotH} into the definition of $D_1$ given in \eqref{D_1} yields
		\begin{align*}
			D_1(0)=(\dot{H}+5H^2)|_{t=0}&=\frac{-4H^3 \dot{\phi} +\frac{2}{3} \dot{\phi}^2 + H^2 \dot{\phi}^2 +12 H^6 }{2-2H\dot\phi+3H^4}\Biggr|_{t=0}\\
			&=\frac{H^2\dot{\phi} ^2+\frac{1}{3}\dot{\phi}^2+(\frac{\sqrt{3}}{3}\dot{\phi}-2\sqrt{3}H^3)^2}{2-2H\dot\phi+3H^4}\Biggr|_{t=0}>0.
		\end{align*}
		This concludes this lemma. 
	\end{proof}

	\begin{proposition}
		\label{proposition:5}
		Under the initial conditions \eqref{eq:2.12!!}--\eqref{eq:2.11!!}, we have
		\begin{equation*}
			H(t) > \frac{1}{5t + \frac{1}{\beta}} >0, \quad \text{for all } t \in (0, \mathcal{T}_+).
		\end{equation*}
	\end{proposition}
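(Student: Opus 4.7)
The plan is to assemble Proposition \ref{proposition:5} from the three preceding lemmas in a short chain, since almost all of the real work has already been done. The only substance left is verifying that the hypotheses feed correctly into each other and that the final lower bound is strictly positive on $(0,\mathcal{T}_+)$.

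First, I would extract the relevant pieces of the initial data assumption. The hypothesis \eqref{eq:2.12!!} guarantees $\beta \in (0,\sqrt{6}/6)$, so in particular $\beta > 0$, which will be needed both as input for Lemma \ref{lemma:3.!} and to ensure the denominator $5t + 1/\beta$ is well-defined and positive for $t\ge 0$. The hypothesis \eqref{eq:2.11!!}, together with $\alpha=0$, selects the negative branch of $\dot\phi$ as explained in Proposition \ref{lemma:3.1}, which is the branch under which the preceding lemmas were proved.

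Next, I would apply Lemma \ref{t:dataB!} to conclude $D_1(0) > 0$. Combined with $\beta > 0$, this is precisely the hypothesis of Lemma \ref{lemma:3.!}, which upgrades positivity at the initial time to positivity on the whole interval of existence, that is, $D_1(t) > 0$ for every $t \in (0,\mathcal{T}_+)$. Finally, I would feed this global sign information into Lemma \ref{lemma:3.2!} with $T_0 = \mathcal{T}_+$ (the lemma is stated for arbitrary $T_0>0$, so taking $T_0$ arbitrarily close to $\mathcal{T}_+$ and passing to the union over such intervals yields the bound on all of $(0,\mathcal{T}_+)$). This produces the Riccati-type comparison bound
\begin{equation*}
H(t) > \frac{1}{5t + \tfrac{1}{\beta}}, \qquad t \in (0,\mathcal{T}_+).
\end{equation*}
Positivity of the right-hand side follows from $\beta > 0$ and $t > 0$, giving the full statement of the proposition.

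There is no serious obstacle here: all the analytic content, particularly the power-identity contradiction used to prevent $D_1$ from ever reaching zero inside $(0,\mathcal{T}_+)$, is already packaged in Lemma \ref{lemma:3.!}, and the Riccati comparison is packaged in Lemma \ref{lemma:3.2!}. The only care required is logical bookkeeping: making sure that the negative-branch hypothesis \eqref{eq:2.11!!} is in force (so that Lemma \ref{t:dataB!} applies as stated), and that the $T_0 \to \mathcal{T}_+$ extension in the final step is justified by the openness of the set on which $D_1>0$ already established. With these minor checks, the proof reduces to a two-line deduction.
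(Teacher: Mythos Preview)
Your proposal is correct and follows essentially the same route as the paper's own proof: invoke Lemma~\ref{t:dataB!} to obtain $D_1(0)>0$, then Lemma~\ref{lemma:3.!} to propagate $D_1>0$ to all of $(0,\mathcal{T}_+)$, and finally Lemma~\ref{lemma:3.2!} to convert this into the Riccati lower bound. The paper's version is terser (one line citing the three lemmas), but the logical chain is identical.
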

	
	\begin{proof}
		Lemma \ref{t:dataB!} implies $D_1(0)>0$, the result then follows directly from Lemmas \ref{lemma:3.2!} and \ref{lemma:3.!}.
	\end{proof}
	
	Now we turn to the upper bound of $H(t)$ for $t \in (0, \mathcal{T}_+)$ as stated in Proposition~\ref{proposition:6}. Throughout the following arguments in the rest of this section, we keep in mind that
$H>0$ by Proposition~\ref{proposition:5}, and we will not state this explicitly each time.

	\begin{lemma}\label{lemma:3.2!!!}
		If there exists a constant $T_0 > 0$ such that $D_2(t) < 0$ for all $t \in (0, T_0)$, then
		\begin{equation*}
			H(t) < \frac{1}{\frac{1}{2}t + \frac{1}{\beta}}, \quad \text{for all } t \in (0,T_0).
		\end{equation*}
	\end{lemma}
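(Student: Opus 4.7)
\medskip

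The plan is to mirror the proof of Lemma~\ref{lemma:3.2!} almost verbatim, adapting it to the reversed inequality. The hypothesis $D_2(t) < 0$ is precisely the Riccati-type differential inequality
\begin{equation*}
\dot{H}(t) < -\tfrac{1}{2}H^2(t), \quad t \in (0, T_0),
\end{equation*}
paired with the initial value $H(0) = \beta > 0$ from \eqref{initial-data!!}--\eqref{eq:2.12!!}. My goal is to integrate this inequality and obtain the claimed upper bound.

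The key preliminary observation is that $H(t) > 0$ throughout $(0, T_0) \subset (0, \mathcal{T}_+)$; this is guaranteed by Proposition~\ref{proposition:5}, which was established under the same initial conditions. With $H > 0$ in hand, I can divide the differential inequality by $H^2$ and rewrite it as
\begin{equation*}
\frac{d}{dt}\!\left(\frac{1}{H(t)}\right) = -\frac{\dot{H}(t)}{H^2(t)} > \frac{1}{2}.
\end{equation*}
Integrating this from $0$ to $t \in (0,T_0)$ gives $1/H(t) - 1/\beta > t/2$, which rearranges to the desired bound $H(t) < 1/(\tfrac{1}{2}t + 1/\beta)$. Alternatively, and more in line with the style of Lemma~\ref{lemma:3.2!}, I will simply invoke the Riccati comparison theorem (Theorem~\ref{theorem:C}) cited earlier, which packages this integration into a one-line deduction.

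There is no real obstacle here: the argument is a direct analogue of Lemma~\ref{lemma:3.2!}, with the sole new ingredient being the positivity of $H$ needed to divide by $H^2$, and that positivity is already on record. I will write the proof as a short paragraph that (i) records $H > 0$ via Proposition~\ref{proposition:5}, (ii) rewrites $D_2 < 0$ as $\dot{H} < -\tfrac{1}{2}H^2$ with $H(0) = \beta$, and (iii) applies Theorem~\ref{theorem:C} to conclude.
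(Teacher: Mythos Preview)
Your proposal is correct and matches the paper's proof essentially line for line: both rewrite $D_2<0$ as the Riccati-type inequality $\dot H<-\tfrac12 H^2$ with $H(0)=\beta$ and then invoke Theorem~\ref{theorem:C} to obtain the bound. The only cosmetic difference is that the paper records $H>0$ as a standing assumption just before the lemma rather than citing Proposition~\ref{proposition:5} inside the proof.
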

	\begin{proof}
		Since $D_2(t) < 0$, it follows that $H$ satisfies a Riccati-type inequality, for all $t \in (0, T_0)$,
		\begin{align*}
			\dot{H}<-\frac{1}{2}H^2 \quad \text{and} \quad
			H|_{t=0}=\beta.
		\end{align*}  
		Integrating the above inequality, Theorem \ref{theorem:C} guarantees the following bound holds
		\begin{equation*}
			H(t) < \frac{1}{\frac{1}{2}t + \frac{1}{\beta}}, \quad \text{for all } t \in (0,T_0), 
		\end{equation*}
		which completes the proof.
	\end{proof}

	\begin{lemma}\label{lemma:3.3!!!}
		Under the initial conditions \eqref{eq:2.12!!}--\eqref{eq:2.11!!}, we have $D_2(t) < 0$ for all $t \in (0, \mathcal{T}_+)$.
	\end{lemma}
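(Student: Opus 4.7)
The plan is to mirror the three-step \emph{first-hit} argument of Lemma~\ref{lemma:3.!}, with the power identity (Lemma~\ref{proposition:3.2++}) again serving as the decisive litmus test. First, I would verify that $D_2(0)<0$. Using \eqref{eq-dotH} with $\alpha=0$ so $e^{\phi(0)}=1$, and substituting the Hamiltonian constraint $\dot\phi(0)^2 = 6\beta^2 - 6\beta^3\dot\phi(0)$ into the numerator, a direct rearrangement yields
\[
\bigl(2-2\beta\dot\phi(0)+3\beta^4\bigr)\,D_2(0) \;=\; -\beta^2\Bigl(5-6\beta^2+\tfrac{3}{2}\beta^4\Bigr) + \beta^3\dot\phi(0)(1-6\beta^2).
\]
Under \eqref{eq:2.12!!}--\eqref{eq:2.11!!} we have $\beta^2<1/6$ and $\dot\phi(0)<0$, so both summands on the right-hand side are strictly negative, while the denominator is positive by Lemma~\ref{t:Dvalue}. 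Hence $D_2(0)<0$.

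Next, by continuity of $D_2$ (from Proposition~\ref{t:locext}) the set $\{T\in(0,\mathcal{T}_+]:D_2<0 \text{ on } (0,T)\}$ is non-empty, and I would let $T_{\max}$ be its supremum. Suppose for contradiction that $T_{\max}<\mathcal{T}_+$; continuity forces $D_2(T_{\max})=0$, i.e.\ $\dot H(T_{\max})=-\tfrac{1}{2}H^2(T_{\max})$. At this instant, Proposition~\ref{proposition:5} gives $H(T_{\max})>0$, while Lemma~\ref{lemma:3.2!!!} applied on $(0,T_{\max})$ yields $H(T_{\max})\leq 1/\bigl(T_{\max}/2+1/\beta\bigr)<\beta$. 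Since $\dot\phi<0$ on $(0,T_{\max})$ by Proposition~\ref{lemma:3.1} and $\phi(0)=0$, we also obtain $\phi(T_{\max})<0$, so $e^{\phi(T_{\max})}<1$. Setting $u:=H^2 e^\phi$, the hypothesis $\beta<\sqrt{6}/6$ then furnishes the key threshold bound $u(T_{\max}) < \beta^2 < 1/6$.

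Substituting $\dot H=-H^2/2$ into the power identity (where $\tfrac{\dot H}{3H^2}=-\tfrac{1}{6}$ and $1+\tfrac{\dot H}{H^2}=\tfrac{1}{2}$), and then using the constraint $\dot\phi^2 = 6H^2 - 6H^3 e^\phi\dot\phi$ to eliminate $\dot\phi^2$, I would arrive at the clean expression
\[
\mathcal{P}(T_{\max}) \;=\; H\Bigl[H^2\bigl(5-6u+\tfrac{3}{2}u^2\bigr) + H^3 e^\phi\,\dot\phi\,(6u-1)\Bigr]\bigg|_{t=T_{\max}}.
\]
Both summands are strictly positive at $T_{\max}$: the polynomial $5-6u+\tfrac{3}{2}u^2>4$ because $u<1/6$, and $(6u-1)\dot\phi>0$ as a product of two negative quantities. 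Therefore $\mathcal{P}(T_{\max}) > 4H^3 > 0$, contradicting the identity $\mathcal{P}\equiv 0$ established in Lemma~\ref{proposition:3.2++}. Hence $T_{\max}=\mathcal{T}_+$, which is the desired conclusion.

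The main obstacle is the sign analysis in this last step. Unlike the $D_1$ case, where the corresponding bracket reduced to a sum of manifestly negative terms regardless of the magnitude of $H^2 e^\phi$, the $D_2$ bracket mixes contributions of both signs, and its positivity depends delicately on keeping $u=H^2 e^\phi$ strictly below $1/6$. It is precisely the initial-data restriction $\beta<\sqrt{6}/6$, transported forward via the monotonicity of $\phi$ (Proposition~\ref{lemma:3.1}) and the Riccati-type upper bound on $H$ (Lemma~\ref{lemma:3.2!!!}), that pins $u$ beneath this threshold and thereby unlocks the contradiction.
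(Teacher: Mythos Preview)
Your proof is correct and follows essentially the same first-hit strategy as the paper. The only minor differences are that the paper obtains the key threshold $H^2 e^\phi<\tfrac{1}{6}$ by computing $\frac{d}{dt}(H^2 e^\phi)<0$ directly (rather than bounding $H$ and $e^\phi$ separately via Lemma~\ref{lemma:3.2!!!} and $\dot\phi<0$), and in the contradiction step the paper invokes the inequality $\dot\phi^2>-6H^3 e^\phi\dot\phi$ from Corollary~\ref{dot phi} instead of substituting the exact Hamiltonian constraint; both manipulations lead to the same positive lower bound for $\mathcal{P}(T_{\max})$.
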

	\begin{proof}
    Firstly, $D_2(0)<0$ can be derived from \eqref{eq:2.12!!}--\eqref{eq:2.11!!} by direct computations as the proof of Lemma \ref{t:dataB!}. Since $D_2(0) < 0$ and $D_2(t)$ is continuous (by the definition \eqref{D_2} and Proposition \ref{t:locext}), there exists a constant $T\in(0, \mathcal{T}_+]$, such that $D_2(t)<0$ for all $t\in(0, T)$. Define
		\begin{equation}\label{e:Tmax}
			T_\text{max}:=\sup\{T\in(0,\mathcal{T}_+] \;|\;D_2(t)<0 \;\;\text{for all } t \in (0, T)\}.
		\end{equation}
		
		To prove the lemma, it suffices to show that $T_{\text{max}} = \mathcal{T}_+$. Suppose, for contradiction, that $T_{\text{max}} < \mathcal{T}_+$. Then because of the continuity of $D_2$, we must have $D_2(T_{\text{max}}) = 0$, otherwise, this would contradict the definition of $T_{\text{max}}$ in \eqref{e:Tmax}.

In the next, we prove two useful inequalities. 	 
According to Corollary \ref{dot phi}, we have
\begin{equation}\label{e:dpng}
	\dot{\phi}<-6H^3e^\phi<0, 
\end{equation}
and consequently,
			\begin{equation}\label{e:dp^2}
				\dot{\phi}^2>-6H^3e^\phi\dot{\phi}   . 
			\end{equation}
Note, by the definition of $D_2$ given in \eqref{D_2}, together with $D_2\leq 0$ on $(0,T_\text{max}]$ and $\dot\phi<0$ given by \eqref{e:dpng}, in $(0,T_{\text{max}}]$, we obtain
			\begin{equation*}
				\frac{d (H^2e^\phi)}{dt}=2H\dot{H}e^\phi+H^2e^\phi\dot{\phi} \leq -H^3e^\phi+H^2e^\phi\dot{\phi}<0
				\label{hphi!}
			\end{equation*}
		which implies
			\begin{equation}
				H^2e^\phi|_{t=T_{\text{max}} }<\beta^2<\frac{1}{6}  \quad \implies \quad \frac{1}{6}  - 	H^2e^\phi|_{t=T_{\text{max}} }>0.
				\label{123}
			\end{equation}
			
				Evaluating the power identity at $t = T_{\text{max}}$, we obtain from Lemma \ref{proposition:3.2++} that  $\mathcal{P}(T_{\text{max}}) = 0$.  
Substituting $D_2(T_{\text{max}}) = 0$ into $\mathcal{P}(T_{\text{max}}) = 0$, with the help of \eqref{e:dp^2} and \eqref{123}, yields 
			\begin{align*}
				0=\mathcal{P}(T_{\text{max}} )=&\left. H \Biggl( \biggl(\frac{5}{6}-H^2e^\phi\biggr)\dot{\phi}^2+4H^3\dot{\phi}e^\phi+\frac{3}{2}H^6e^{2\phi} \Biggr) \right|_{t=T_{\text{max}} } \notag \\
				>&\left. H \Biggl( -6H^3e^\phi\biggl(\frac{1}{6}-H^2e^\phi\biggr)\dot{\phi}+\frac{3}{2}H^6e^{2\phi}  \Biggr) \right|_{t=T_{\text{max}} } >0 , 
			\end{align*}   
  where we have used $H>0$ guaranteed by Proposition \ref{proposition:5}, all the terms in the brackets are positive.  Together with $H(T_{\text{max}}) > 0$ , the entire expression is strictly positive, leading to the contradiction. 
		Hence, our assumption that $T_{\text{max}} < \mathcal{T}_+$ must be false, and we conclude that $T_{\text{max}} = \mathcal{T}_+$. This completes the proof. 
	\end{proof}

	\begin{proposition}
		\label{proposition:6}
		Under the initial conditions \eqref{eq:2.12!!}--\eqref{eq:2.11!!}, we have
		\begin{equation*}
			H(t) < \frac{1}{\frac{1}{2}t + \frac{1}{\beta}}, \quad \text{for all } t \in (0, \mathcal{T}_+).
		\end{equation*}
	\end{proposition}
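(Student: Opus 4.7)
The plan is to assemble Proposition \ref{proposition:6} as a direct corollary of the two preceding lemmas, mirroring exactly the structure that was used to establish Proposition \ref{proposition:5}. All the substantive work has already been packaged into Lemma \ref{lemma:3.2!!!} (the Riccati-comparison step) and Lemma \ref{lemma:3.3!!!} (the sign-preservation step for $D_2$), so the proof itself is a one-line assembly.

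Concretely, I would first invoke Lemma \ref{lemma:3.3!!!}, which, under the initial conditions \eqref{eq:2.12!!}--\eqref{eq:2.11!!}, guarantees $D_2(t) < 0$ for all $t \in (0, \mathcal{T}_+)$. This already contains the entire nontrivial content: the first-hit argument applied to the power identity $\mathcal{P}$ of Lemma \ref{proposition:3.2++}, together with the monotonicity observation that $H^2 e^{\phi}$ is strictly decreasing (which keeps us below the threshold $1/6$ needed to make the sign computation at $T_{\max}$ go through). I would then feed this conclusion into Lemma \ref{lemma:3.2!!!} with $T_0 = \mathcal{T}_+$; the lemma translates $D_2 < 0$ into the differential inequality $\dot H < -\tfrac12 H^2$ with initial datum $H(0) = \beta$, and the Riccati comparison theorem (Theorem \ref{theorem:C}) yields the explicit bound
\begin{equation*}
H(t) < \frac{1}{\tfrac{1}{2}t + \tfrac{1}{\beta}}, \qquad t \in (0, \mathcal{T}_+).
\end{equation*}

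There is no genuine obstacle at this stage; all the analytic difficulty has been absorbed into the preceding lemmas. The only thing to be mindful of is making sure the positivity $H > 0$ on $(0, \mathcal{T}_+)$ (supplied by Proposition \ref{proposition:5}) is implicitly available, since it is what allowed the Riccati comparison in Lemma \ref{lemma:3.2!!!} to be applied without worrying about sign changes of $H$. With this in hand, the proof reduces to citing Lemmas \ref{lemma:3.2!!!} and \ref{lemma:3.3!!!} in sequence.
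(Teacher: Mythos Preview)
Your proposal is correct and matches the paper's own proof essentially verbatim: the paper simply states that the result follows directly from Lemmas \ref{lemma:3.2!!!} and \ref{lemma:3.3!!!}, which is exactly the assembly you describe.
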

	
	\begin{proof}
		The result then follows directly from Lemmas \ref{lemma:3.2!!!} and \ref{lemma:3.3!!!}.
	\end{proof}

\begin{lemma}\label{t:H2e}
		Under the initial conditions \eqref{eq:2.12!!}--\eqref{eq:2.11!!}, 	$H^2e^\phi <\beta^2$ for all $t \in (0, \mathcal{T}_+)$. 
\end{lemma}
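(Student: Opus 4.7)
The plan is to show that $H^2 e^\phi$ is strictly decreasing on $(0, \mathcal{T}_+)$ and then compare with its value at $t=0$. This is essentially the same monotonicity computation that already appeared as an intermediate step inside the proof of Lemma~\ref{lemma:3.3!!!}, so it can now be cleanly isolated, since Proposition~\ref{proposition:6} is available globally on $(0, \mathcal{T}_+)$.

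First, I would note that Lemma~\ref{lemma:3.3!!!} gives $D_2(t) < 0$ throughout $(0, \mathcal{T}_+)$, which by definition \eqref{D_2} is the Riccati-type inequality $\dot H < -\tfrac{1}{2} H^2$. Combined with the positivity $H > 0$ from Proposition~\ref{proposition:5}, this yields $2H\dot H e^\phi < -H^3 e^\phi < 0$ on $(0, \mathcal{T}_+)$. Next, Corollary~\ref{dot phi} gives $\dot\phi < -6 H^3 e^\phi < 0$, so $H^2 e^\phi \dot\phi < 0$ as well.

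Then I would differentiate $H^2 e^\phi$ directly:
\begin{equation*}
\frac{d}{dt}\bigl(H^2 e^\phi\bigr) \;=\; 2H\dot H\, e^\phi + H^2 e^\phi \dot\phi \;<\; -H^3 e^\phi + H^2 e^\phi \dot\phi \;<\; 0
\end{equation*}
on $(0, \mathcal{T}_+)$, so $H^2 e^\phi$ is strictly decreasing there. Since the initial data \eqref{initial-data!!} together with $\alpha = 0$ gives $(H^2 e^\phi)\big|_{t=0} = \beta^2 e^0 = \beta^2$, the desired strict bound $H^2 e^\phi < \beta^2$ on $(0, \mathcal{T}_+)$ follows immediately by integration.

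There is no real obstacle here, the argument is essentially a direct corollary of Proposition~\ref{proposition:6} and Corollary~\ref{dot phi}; the only thing to verify carefully is that both sign conditions $\dot H < -\tfrac{1}{2} H^2$ and $\dot \phi < 0$ hold on the full interval $(0, \mathcal{T}_+)$ (not merely up to some first-hit time), which is precisely what Lemmas~\ref{lemma:3.!}--\ref{lemma:3.3!!!} and Proposition~\ref{lemma:3.1} have already established.
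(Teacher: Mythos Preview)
Your proof is correct and follows essentially the same approach as the paper: both use Lemma~\ref{lemma:3.3!!!} to get $\dot H < -\tfrac{1}{2}H^2$, the positivity of $H$, and the negativity of $\dot\phi$ to show $\frac{d}{dt}(H^2 e^\phi)<0$ on $(0,\mathcal{T}_+)$, then compare with the initial value $\beta^2$. The only cosmetic difference is that you cite Corollary~\ref{dot phi} for $\dot\phi<0$ while the paper cites Proposition~\ref{lemma:3.1}, which is immaterial.
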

\begin{proof}
	By Lemma \ref{lemma:3.3!!!}, we obtain $D_2(t)<0$ (i.e., $\dot H(t)<-\frac{1}{2}H^2(t)$) for all $t\in(0,\mathcal{T}_+)$. Then, noting $\dot\phi<0$ (by Proposition \ref{lemma:3.1}) and $H>0$ for $t\in(0,\mathcal{T}_+)$ (by Proposition \ref{proposition:5}), we derive
		\begin{equation*}
		\frac{d (H^2e^\phi)}{dt} =2H\dot{H}e^\phi+H^2e^\phi\dot{\phi}  < -H^3e^\phi+H^2e^\phi\dot{\phi} <0 
    	\end{equation*}
	for all $t\in(0,\mathcal{T}_+)$. Then $H^2e^\phi<\beta^2$ for all $t\in(0,\mathcal{T}_+)$. 
\end{proof}

	\begin{proposition}
		\label{proposition:phi2}
		Under the initial conditions \eqref{eq:2.12!!}--\eqref{eq:2.11!!}, we have
		\begin{equation*}
			-(12\beta^2+2\sqrt{6})\ln{\biggl(\frac{1}{2}\beta t+1\biggr)}<\phi(t)<-\ln{\biggl(1+\frac{3}{5}\Bigl(\beta^2-\frac{1}{(5t+\frac{1}{\beta})^2}\Bigr)\biggr)}.
		\end{equation*}
	\end{proposition}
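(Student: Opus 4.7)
The plan is to obtain the two bounds separately by converting the pinched estimate for $\dot\phi$ from Corollary~\ref{dot phi} into differential inequalities that can be integrated against the already-established bounds on $H$ from Propositions~\ref{proposition:5} and~\ref{proposition:6}. Initial condition $\phi(0)=\alpha=0$ (from~\eqref{eq:2.12!!}) will be used to convert integrated inequalities into pointwise bounds on $\phi$.

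For the upper bound, I would begin with the sharper side of Corollary~\ref{dot phi}, $\dot\phi<-6H^{3}e^{\phi}$. Multiplying by $e^{-\phi}>0$ gives $\tfrac{d}{dt}(e^{-\phi})=-e^{-\phi}\dot\phi>6H^{3}$. Integrating from $0$ to $t\in(0,\mathcal{T}_+)$ and inserting the lower bound $H(s)>\bigl(5s+\tfrac{1}{\beta}\bigr)^{-1}$ from Proposition~\ref{proposition:5}, a routine antiderivative
\[
\int_{0}^{t}\frac{6\,ds}{(5s+\tfrac{1}{\beta})^{3}}=\frac{3}{5}\Bigl(\beta^{2}-\frac{1}{(5t+\tfrac{1}{\beta})^{2}}\Bigr)
\]
produces $e^{-\phi(t)}-1>\tfrac{3}{5}\bigl(\beta^{2}-(5t+\tfrac{1}{\beta})^{-2}\bigr)$, and taking $-\ln$ yields the stated upper bound on $\phi$.

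For the lower bound, I would start from the other side of Corollary~\ref{dot phi}, $\dot\phi>-6H^{3}e^{\phi}-\sqrt{6}\,H$, and crucially use Lemma~\ref{t:H2e} to bound the exponential factor, writing $H^{3}e^{\phi}=H\cdot H^{2}e^{\phi}<\beta^{2}H$. This gives the clean linear-in-$H$ inequality $\dot\phi>-(6\beta^{2}+\sqrt{6})H$. Integrating from $0$ to $t$ and employing the upper bound $H(s)<\bigl(\tfrac{s}{2}+\tfrac{1}{\beta}\bigr)^{-1}$ from Proposition~\ref{proposition:6} leads to
\[
\phi(t)>-(6\beta^{2}+\sqrt{6})\int_{0}^{t}\frac{ds}{\tfrac{s}{2}+\tfrac{1}{\beta}}=-(12\beta^{2}+2\sqrt{6})\ln\Bigl(\tfrac{1}{2}\beta t+1\Bigr),
\]
which is precisely the claimed lower bound.

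Neither step presents a serious obstacle: the two bounds of Corollary~\ref{dot phi} already enclose $\dot\phi$ from both sides, and the bounds of Propositions~\ref{proposition:5}--\ref{proposition:6} together with Lemma~\ref{t:H2e} provide exactly the auxiliary information needed. The only delicate point is recognizing that the lower bound for $\phi$ requires replacing the explosive-looking term $H^{3}e^{\phi}$ by $\beta^{2}H$ \emph{before} integrating — if one instead tried to integrate $H^{3}e^{\phi}$ directly using only the $H$-bounds, no closed-form estimate comparable to the stated one would emerge. Otherwise, all remaining manipulations are elementary integrations of Riccati-compatible rational expressions.
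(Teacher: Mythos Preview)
Your proposal is correct and follows essentially the same approach as the paper: both use the two sides of Corollary~\ref{dot phi}, invoke Lemma~\ref{t:H2e} to replace $H^{3}e^{\phi}$ by $\beta^{2}H$ for the lower bound, and then integrate against the $H$-bounds from Propositions~\ref{proposition:5} and~\ref{proposition:6}. The paper's proof is just a more condensed presentation of the same argument.
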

	\begin{proof}
	Recall that Corollary \ref{dot phi} provides 
		\begin{equation*}
			-6H^3e^\phi-\sqrt{6}H<\dot{\phi}<-6H^3e^\phi,
		\end{equation*}
and by substituting the bounds of $H$ from Propositions \ref{proposition:5} and \ref{proposition:6}, together with the upper bound for 
 $H^2e^\phi$ given in Lemma \ref{t:H2e}, we obtain 
		\begin{equation*}
			-\frac{6\beta^2+\sqrt{6}}{\frac{1}{2}t+\frac{1}{\beta}}<\dot{\phi}<-6 \biggl(\frac{1}{5t+\frac{1}{\beta}}\biggr)^3e^\phi \quad \text{with}\quad \phi(0)=0 .
		\end{equation*}
		
Integrating both sides of the above inequalities yields the desired bounds for $\phi$.
	\end{proof}

\subsection{Bounds of $H$ for $t \in (\mathcal{T}_-,0)$}
	
	Now we proceed to establish the bounds in the region $t \in (\mathcal{T}_-,0)$. Define $D_3(t)$, $D_4(t)$, and $D_5(t)$  as  
	\begin{align}
	D_3:=&\dot{H}-3H^4e^\phi+H^2;
	\label{D_3} \\
	D_4:=&\dot{H}-\frac{12}{5}H^4e^\phi+3H^2;
	\label{D_4} \\
	D_5:=&\dot{H}-H^2+3e^{-\phi}.
	\label{D_5}
\end{align}

\begin{lemma}
	\label{1}
	Suppose $\beta>0$. 
	If  there is a finite constant $T\in (\mathcal{T}_-,0)$, such that $D_3(t)<0$ for all $t\in( T,0)$, then $H(t)>0$  for all $ t\in [T,0)$. 
\end{lemma}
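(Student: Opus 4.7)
The plan is to argue by the ``first hit'' contradiction scheme of \S\ref{s:trategy}. I would suppose, for contradiction, that the set $\{t\in[T,0):H(t)\leq 0\}$ is nonempty, and let $\tau^*$ denote its supremum. Since $H(0)=\beta>0$ and $H$ is continuous, one has $\tau^*<0$; continuity then forces $H(\tau^*)=0$ and $H(t)>0$ for all $t\in(\tau^*,0]$. The proof then splits into the two cases $\tau^*\in(T,0)$ and $\tau^*=T$.

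In the interior case $\tau^*\in(T,0)$ the hypothesis $D_3(\tau^*)<0$ is directly available. Substituting $H(\tau^*)=0$ into \eqref{D_3} collapses $D_3(\tau^*)$ to $\dot H(\tau^*)$, so $\dot H(\tau^*)<0$; hence $H$ is strictly decreasing through $\tau^*$, forcing $H(t)<0$ just to the right of $\tau^*$ and contradicting the positivity of $H$ on $(\tau^*,0]$. This is the standard first-hit mechanism from \S\ref{s:trategy}, with $D_3$ playing the role of the sign-preserved auxiliary quantity.

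The genuinely delicate situation is the endpoint case $\tau^*=T$, where the strict inequality $D_3(T)<0$ is not part of the hypothesis and continuity only yields $D_3(T)\leq 0$. Here I would exploit the Hamiltonian constraint \eqref{eq:2.5++}: evaluating it at $T$ with $H(T)=0$ gives $\tfrac12\dot\phi(T)^2=0$, so $\dot\phi(T)=0$. Thus the initial data at $t=T$ for the reduced first-order system \eqref{e:locsys} are $(H,\phi,\Phi)=(0,\phi(T),0)$, and direct inspection of \eqref{e:F1}--\eqref{e:F2} shows that both $F_1$ and $F_2$ vanish there, so this is an equilibrium. The Picard-Lindel\"of uniqueness underlying Proposition~\ref{t:locext} then forces our solution to coincide with the trivial constant solution in a neighborhood of $T$, contradicting $H>0$ on $(T,0]$.

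I expect the main obstacle to be exactly this endpoint case: the naive first-hit argument only controls interior zeros of $H$, and the hypothesis $D_3<0$ degenerates at $t=T$. The key observation that unlocks the endpoint is that the Hamiltonian constraint couples zeros of $H$ to zeros of $\dot\phi$ simultaneously, turning any such endpoint zero into an equilibrium of the reduced ODE so that standard uniqueness closes the case.
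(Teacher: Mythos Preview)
Your argument is correct but takes a genuinely different route from the paper. The paper bypasses any case analysis: from $D_3<0$ it extracts the crude differential inequality $\dot H<3H^4e^{\phi}$, introduces the comparison equation $\dot{\underline H}=3\underline H^4 e^{\phi}$ with $\underline H(0)=\beta$, solves it explicitly as $\underline H(t)=\bigl(\beta^{-3}+9\int_t^0 e^{\phi(s)}\,ds\bigr)^{-1/3}$, and then invokes the comparison theorem (Theorem~\ref{theorem:C}) to conclude $H(t)>\underline H(t)>0$ on $(T,0)$, with the endpoint $t=T$ handled by taking a limit. This yields a quantitative positive lower bound in one stroke. Your approach instead combines the first-hit mechanism with two structural ingredients the paper does not invoke here: the Hamiltonian constraint \eqref{eq:2.5++} (to force $\dot\phi(T)=0$ once $H(T)=0$) and Picard--Lindel\"of uniqueness at the equilibrium $(0,\phi(T),0)$ of \eqref{e:locsys}. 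Your route avoids the comparison theorem and any integration, at the price of the endpoint case split; the paper's route is more uniform and delivers an explicit bound, though that bound is never used downstream (subsequent lemmas only need the qualitative positivity $H(T_{\min})>0$).
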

\begin{proof}
Since $(\mathcal{T}_-,0)$ is the maximal backward interval of existence, Proposition~\ref{t:locext} ensures that $(H,\phi,\dot \phi) \in C^1((\mathcal{T}_-,0),\Rbb^3)$. From the assumption $D_3(t)<0$ for $t\in(T,0)$ and the expression \eqref{D_3}, we derive, for $t\in(T,0)$,
	\begin{equation*}
		\dot{H}<  3H^4e^\phi-H^2  \leq 3H^4e^\phi \quad \text{with } H(0)=\beta>0 . 
	\end{equation*}
Consider the comparison equation
\begin{equation*}
\dot{\underline{H}}= 3\underline{H}^4 e^\phi  \quad \text{with } \underline{H}(0)=\beta . 
\end{equation*}
Since $\phi(t)$ is known on $(T,0)$, direct integration yields
\begin{equation*}
	\underline{H}^{-3}(t)=\beta^{-3} + 9\int_t^0 e^{\phi(s)}\,ds \quad \implies \quad \underline{H}(t)=\Big(\beta^{-3} + 9\int_t^0 e^{\phi(s)}\,ds\Big)^{-1/3}  
\end{equation*}
for $t\in( T,0)$.

Note that $T\in (\mathcal{T}_-,0)$ implies 
\begin{equation}\label{e:inte}
	0<\int_T^0 e^{\phi(s)}\,ds \leq |T|\max_{t\in[T,0]} e^{\phi(t)}<+\infty . 
\end{equation} 
By the comparison theorem (Theorem~\ref{theorem:C}) and \eqref{e:inte}, it follows that
\begin{equation*}
	H(t)>\underline{H}(t)=\Big(\beta^{-3} + 9\int_t^0 e^{\phi(s)}\,ds\Big)^{-1/3} \geq \underline{H}(T)=\Big(\beta^{-3} + 9\int_T^0 e^{\phi(s)}\,ds\Big)^{-1/3} > 0 
\end{equation*}
for $t\in(T,0)$. 
Taking the limit as $t\rightarrow T+$ yields $H(T)\geq \underline{H}(T)>0$. 
This completes the proof. 
\end{proof}

		\begin{lemma}\label{t:Hphibd}
Suppose there exists a constant $T_0 \in(\mathcal{T}_-,0)$  such that  $D_3(t)<0$, $D_4(t)>0$  and $D_5(t)>0$ for all $t\in(T_0,0)$. Then $H$ satisfies
		\begin{align*}
			H(t)< & \min\left\{ \left(e^{6 \gamma  t} \left(\frac{1}{\beta ^2}-\frac{4 \beta }{5 \gamma }+\frac{2 \theta }{15 \gamma ^2}\right)-\frac{2 (-6 \beta  \gamma +\theta +6 \gamma  \theta  t)}{15 \gamma ^2}\right)^{-\frac{1}{2} },  \;\frac{\gamma}{-3\beta^3 t+1} \right\}   , \label{e:Hest1}\\
			H(t)> &   \left[ \beta^{-1/4} - \frac{\beta^{-1/4}}{4(m+1)} \left( 1 - (-3\beta^3 t+1)^{m+1} \right) \right]^{-4}>0   ,  
			\end{align*}
			and $\phi$ satisfies
			\begin{equation}\label{e:phiest1}
					\ln (-3\beta^3t+1)^2 < \phi < \frac{\sqrt{6} \gamma}{3\beta^3} \ln(-3\beta^3 t+1) +   \frac{2\beta^3}{4\gamma + \beta^3} \left[ (-3\beta^3 t+1)^{\frac{4\gamma + \beta^3}{\beta^3}} - 1 \right] 
			\end{equation} 
 		for $t\in (T_0,0)$, 	where
			\begin{equation*}
				\theta:= \frac{2\big(9\beta^{6}+\sqrt{9\beta^{6}+12}\,\beta^{3}\big)}{\big(\sqrt{9\beta^{6}+12}+3\beta^{3}\big)^{2}} , \quad 			\gamma =  \frac{3\beta^3+\sqrt{9\beta^6+12}}{2} \AND 	m =\frac{107 \sqrt{9 \beta ^6+12}+309 \beta ^3}{60 \beta ^3}  . 
			\end{equation*} 
	\end{lemma}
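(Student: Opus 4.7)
The plan is to follow the hierarchical strategy outlined in \S\ref{s:trategy} and depicted in Fig.~\ref{f:flowchart}. I would first introduce the composite variables
\begin{equation*}
y = H, \quad w = e^{\phi}, \quad z = H^3 e^{\phi}, \quad v = H e^{\phi}, \quad p = H^{11/4} e^{\phi},
\end{equation*}
compute their time derivatives, and eliminate $\dot H$ and $\dot \phi$ using the three hypothesized sign conditions $D_3<0$, $D_4>0$, $D_5>0$ together with Corollary~\ref{dot phi}, which supplies $-6H^3e^{\phi}-\sqrt{6}\,H<\dot{\phi}<-6H^3e^{\phi}$. Each resulting inequality should involve $y,w,z,v,p$ as the quantities to be bounded, with forcing terms controlled by quantities already estimated at earlier stages. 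Lemma~\ref{1} ensures $H>0$ on $[T_0,0)$, which legitimates dividing by $H$ and raising $H$ to a fractional power such as $11/4$.

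I would then unwind the hierarchy stage by stage. First, combining the lower bound on $\dot H$ from $D_5>0$ with $\dot\phi>-6z-\sqrt{6}\,y$ produces an explicitly integrable differential inequality for $z$ with datum $z(0)=\beta^3$; by Theorem~\ref{theorem:C} this yields a lower bound of the shape $z\geq\beta^3(1-3\beta^3 t)^{-\kappa}$ for a suitable $\kappa$, and dividing by $y^3$ together with a crude upper bound on $y$ gives the claimed $\phi>\ln(1-3\beta^3 t)^2$. Plugging this lower bound on $w$ back into $D_3<0$ (namely $\dot H<3H^4 w-H^2$) produces a Riccati-type inequality for $y$ whose comparison solution furnishes $H<\gamma/(1-3\beta^3 t)$, with $\gamma$ emerging naturally as the positive root of $\gamma^2-3\beta^3\gamma-3=0$. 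Reinserting this upper bound of $y$ into the lower bound of $z$ yields a lower bound of $v=z/y^2$, and cycling $v$ through $D_4>0$ delivers the second, exponential-polynomial upper bound of $H$; the minimum of the two gives the stated upper bound. The upper bound of $\phi$ then follows by inserting the upper bound of $y$ into $\dot z$, integrating to obtain an upper bound of $z$, and integrating once more the inequality $\dot\phi>-6z-\sqrt{6}\,y$, the exponent $(4\gamma+\beta^3)/\beta^3$ falling out of the relevant integrating factor. Finally, combining the upper bound of $y$ with the lower bound of $z$ inside an inequality for $p$ yields an upper bound of $p$ which, upon solving for $y=p^{4/11}w^{-4/11}$, produces the stated lower bound of $H$, the constant $m=(107\sqrt{9\beta^6+12}+309\beta^3)/(60\beta^3)$ arising as the ratio of exponents in that final integration against $(1-3\beta^3 t)^{m}$.

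The main obstacle will be the algebraic bookkeeping needed for the cascade to close. At every stage the comparison ODE must be integrable in closed form and must insert a power of $(1-3\beta^3 t)$ with exactly the exponent demanded by the next stage; this is why the particular exponent $11/4$ in the definition of $p$ is delicate, as it is the unique value that balances the powers of $y$ and $w$ inherited from the previous steps so that the $p$-inequality decouples and integrates cleanly, thereby producing the explicit constant $m$. In addition, one must verify that none of the comparison solutions blow up prematurely on $(T_0,0)$ — this is what guarantees that the bound on $H$ is not only an inequality but is also positive, so that the next-level variables $w,z,v,p$ remain well-defined and the application of Theorem~\ref{theorem:C} at the subsequent level is justified.
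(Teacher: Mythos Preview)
Your overall framework matches the paper's — introduce $y,w,z,v,p$, derive differential inequalities from $D_3<0$, $D_4>0$, $D_5>0$ and Corollary~\ref{dot phi}, then solve in a hierarchy — but at several stages you have inverted which inequality feeds which bound, and as written the cascade does not close.

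The recurring issue is the direction of comparison when integrating \emph{backward} from $t=0$. By Theorem~\ref{theorem:C}, an upper bound on a derivative yields a \emph{lower} bound on the function for $t<0$, and vice versa. Concretely: (i) For the lower bound on $z$, you propose combining $D_5>0$ with $\dot\phi>-6z-\sqrt{6}\,y$; this gives a lower bound on $\dot z$, hence only an \emph{upper} bound on $z$ for $t<0$. The paper instead uses $D_3<0$ and $\dot\phi<-6z$ to obtain $\dot z<3z^2$, whence $z>\beta^3/(1-3\beta^3 t)$. (ii) For the lower bound on $w$, you write ``dividing by $y^3$ together with a crude upper bound on $y$'', but no upper bound on $y$ is yet available at that stage. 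The paper instead integrates $\dot w=w\dot\phi<-6zw$ directly using the lower bound on $z$. (iii) For the first upper bound on $y$, you plug the lower bound on $w$ into $D_3<0$, i.e.\ $\dot H<3H^4w-H^2$; but a lower bound on $w$ cannot control the right side from above. The paper uses $D_5>0$, namely $\dot y>y^2-3/w$, plugs in $w>(1-3\beta^3 t)^2$, and compares with $\bar y=\gamma/(1-3\beta^3 t)$ (with $\bar y(0)=\gamma\ge\beta$), which is exactly where the quadratic $\gamma^2-3\beta^3\gamma-3=0$ you mention arises. (iv) For the final lower bound on $y$, solving $y=p^{4/11}w^{-4/11}$ with an \emph{upper} bound on $p$ gives an \emph{upper} bound on $y$, not a lower one. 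The paper instead rewrites $D_3<0$ as $\dot y<3y^{5/4}p-y^2<3y^{5/4}p$, inserts the upper bound on $p$, and integrates this separable inequality backward; the exponent $m$ is simply the power of $(1-3\beta^3 t)$ appearing in that bound on $p$, not a balancing condition tied to the choice $11/4$.

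In short: the skeleton is right, but you need to swap the roles of $D_3$ and $D_5$ in the first upper bound on $y$, derive the $z$ and $w$ lower bounds from \emph{upper} bounds on $\dot z$ and $\dot w$, and obtain the lower bound on $y$ via the differential inequality $\dot y<3y^{5/4}p$ rather than the algebraic identity.
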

	\begin{proof}   
		To estimate 
	$\phi$ and $H$, we first derive a system of differential inequalities (Step 1), and then establish a hierarchy of estimates to handle them (Step 2). Solving these inequalities in a hierarchical manner yields the desired conclusion of the lemma.
		
		\underline{Step $1$: Derivations of differential inequalities.} 
		Since $D_3(t)<0$, $D_4(t)>0$  and $D_5(t)>0$ hold for all $t\in(T_0,0)$,  we obtain
		\begin{equation}\label{e:D3<0}
			\dot{H}<3 H^4e^\phi-H^2  , \quad 	\dot{H}>\frac{12}{5} H^4e^\phi- 3 H^2   \AND\dot{H}>H^2-3e^{-\phi}  
		\end{equation}
		for all $t\in(T_0,0)$. 
		In addition, we note that
			\begin{align}
			\frac{d (H^3e^\phi)}{dt}=& 3H^2\dot He^\phi+H^3e^\phi \dot{\phi},  \label{e:He1} \\
			\frac{d( He^\phi)}{dt} = & \dot H e^\phi +H e^\phi \dot\phi   ,  \label{e:He2}    \\ 	\frac{d(H^{\frac{11}{4}} e^\phi)}{dt} = & \frac{11}{4} H^{\frac{7}{4}} \dot H e^\phi+H^{\frac{11}{4}} e^\phi \dot\phi  .  \label{e:He3} 
		\end{align}
		
In the following analysis, we will frequently use the fact that $H>0$ for $t\in (\mathcal{T}_-,0)$, as stated in Lemma~\ref{1}, and this will no longer be mentioned explicitly.  

	Substituting the inequalities in \eqref{e:D3<0} into \eqref{e:He1}, respectively, and applying Corollary \ref{dot phi}, that is, $\dot{\phi}<-6H^3e^\phi$ and $\dot{\phi}>	-6H^3e^\phi-\sqrt{6}H$, we arrive at, 
	for all $t\in(T_0,0)$, 
		\begin{align}
	\frac{d (H^3e^\phi)}{dt} 
	< &  3 H^6e^{2\phi}-3H^4e^\phi   ,  \label{e:H4epineq1}\\
	\frac{d (H^3e^\phi)}{dt} 
	> & \frac{6}{5} H^6e^{2\phi}-12 H^4e^\phi . \label{e:H4epineq2} 
\end{align} 

Similarly, substituting the first inequality of \eqref{e:D3<0} into \eqref{e:He2}, and the second inequality of \eqref{e:D3<0} into \eqref{e:He3}, again using Corollary~\ref{dot phi}, we obtain, for all $t\in(T_0,0)$,  
		\begin{align}
	\frac{d( He^\phi)}{dt}  < & -3 H^4e^{2\phi}-H^2e^\phi  , \label{e:Heineq} \\
	\frac{d(H^{\frac{11}{4}} e^\phi)}{dt}   
	>& \frac{1}{10} H^{\frac{11}{4}} e^{\phi } \left(6 H^3 e^{\phi }-107H \right) . \label{e:H11/4eineq} 
\end{align}

\underline{Step $2$: Solving the system of differential inequalities.} 
For convenience, we introduce the quantities
		\begin{equation}\label{e:yzdef}
			y=H , \quad w=e^\phi, \quad z=H^3 e^\phi  , \quad  v=H e^\phi  \AND 
			p=H^{\frac{11}{4}} e^\phi   . 
		\end{equation}
With these new variables \eqref{e:yzdef}, the inequalities \eqref{e:D3<0}, \eqref{e:H4epineq1}–\eqref{e:H4epineq2}, \eqref{e:Heineq}–\eqref{e:H11/4eineq}, together with $\dot{\phi}<-6H^3e^\phi$ and $\dot{\phi}>	-6H^3e^\phi-\sqrt{6}H$, translate into the following system
		\begin{align}
			\dot y<&3 y^{\frac{5}{4}} p   -y^2  , \label{e:dy1} \\
			\dot y>& 
			\frac{12}{5}y^3v-3y^2
			, \label{e:dy2} \\
			\dot y> & y^2 -\frac{3}{w} , \label{e:dy3} \\
			\dot{z} < & 3z^2-3yz .  \label{e:dz1}\\
			\dot{z} > & \frac{6}{5}z^2-12yz .  \label{e:dz2}\\ 
			\dot{w}<& -6 z w.  \label{e:dw1} \\
			\dot w > &-6 z w -\sqrt{6} y w  ,  \label{e:dw2}   \\
			\dot v<& -3\frac{z^2}{y^2} -\frac{z}{y} ,   \label{e:dv1}   \\
			\dot p>&\frac{1}{10} p \left(6 z -107 y \right) ,  \label{e:dp1} 
		\end{align} 
		for all $t\in(T_0,0)$. 
These differential inequalities are supplemented with the initial data
		\begin{equation}\label{e:yzdata}
			y(0)=\beta, \quad z(0)=\beta^3, \quad w(0)=1 , \quad v(0)=\beta \AND p(0)=\beta^{\frac{11}{4}}  . 
		\end{equation}

In the following, we establish a hierarchy of estimates (refer to Fig.~\ref{f:flowchart}).

\underline{$(1)$ The lower bound for $z$.}
Since $yz=H^4 e^\phi>0$, \eqref{e:dz1} yields a Riccati-type inequality
		\begin{equation*}
			\dot{z} <   3z^2-3yz <3z^2. 
		\end{equation*}
With the initial condition $z(0) =\beta^3$ (recall \eqref{e:yzdata}), integration of the differential inequality gives, 
for all $t\in(T_0,0)$, 
		\begin{equation}\label{e:zlwb1}
			z> \frac{\beta^3}{-3\beta^3 t+1}  . 
		\end{equation}

\underline{$(2)$ The lower bound for $w$.}
Substituting the lower bound of 
$z$ from \eqref{e:zlwb1} into \eqref{e:dw1}, we obtain
\begin{equation*}
	\dot{w}<  -6 z w = -\frac{6 \beta^3}{-3\beta^3 t+1} w  \quad \text{with } w(0) =1 . 
\end{equation*} 
Integrating this differential inequality gives
\begin{equation}\label{e:wlbd}
	w=e^\phi>(-3\beta^3 t+1)^2 \quad \text{for all} \quad t \in (T_0, 0).
\end{equation}
Consequently, we obtain a lower bound for $\phi$, 
\begin{equation}\label{e:philwbd}
	\phi > \ln (-3\beta^3 t+1)^2 \quad \text{for all} \quad t \in (T_0, 0) , 
\end{equation}
which establishes the left-hand side of \eqref{e:phiest1}.

		\underline{$(3)$ The first upper bound for $y$.}		
	Substituting the lower bound of 
	$w$ in \eqref{e:wlbd} into \eqref{e:dy3}, we obtain
		\begin{equation*}
			\dot y>   y^2 -\frac{3}{w} >   y^2 -\frac{3}{(-3\beta^3 t+1)^2}  \quad \text{with } y(0)=\beta  . 
		\end{equation*} 
Consider the comparison equation
		\begin{equation}\label{e:bryeq}
				\dot{\bar{y} } =   \bar{y}^2 -\frac{3}{(-3\beta^3 t+1)^2}  \quad \text{with } \bar{y}(0)= \gamma \geq \beta=y(0) , 
		\end{equation}
where $\gamma$ is to be determined. 
A direct computation shows that
		\begin{equation}\label{e:theta}
			\bar{y}= \frac{\gamma}{-3\beta^3 t+1}    \quad \text{where }	\gamma :=  \frac{3\beta^3+\sqrt{9\beta^6+12}}{2} >\beta 
		\end{equation}
is a solution to \eqref{e:bryeq}.
Applying the comparison theorem (see, e.g., \cite[Theorem~$2.6.2$]{Hsu2013}), we conclude that, for all $t\in(T_0,0)$, 
		\begin{equation}\label{e:upbdy}
			y< \bar{y}= \frac{\gamma}{-3\beta^3 t+1}  <\gamma    . 
		\end{equation}

\underline{$(4)$ The lower bound for $v$.}	We now derive a lower bound for 
$v$ using \eqref{e:dv1}.
From \eqref{e:zlwb1} and \eqref{e:upbdy}, we have
		\begin{align*}
			-\frac{z}{y}  < &- \frac{\beta^3}{-3\beta^3 t+1}  \frac{1}{\frac{\gamma}{-3\beta^3 t+1} }  = - \frac{\beta^3}{ \gamma}    \\
			\intertext{and}
		 -\frac{z^2}{y^2}   < & - \left(\frac{\beta^3}{-3\beta^3 t+1}\right)^2  \frac{1}{(\frac{\gamma}{-3\beta^3 t+1})^2} =-\frac{\beta^6}{\gamma^2}. 
		\end{align*}
Therefore, combining \eqref{e:dv1} with \eqref{e:yzdata} yields
		\begin{equation}\label{e:dv2}
			\dot v<-\frac{3\beta^6}{\gamma^2}  - \frac{\beta^3}{ \gamma}   =-\frac{2 \left(9 \beta ^6+\sqrt{9 \beta ^6+12} \beta ^3\right)}{\left(\sqrt{9 \beta ^6+12}+3 \beta ^3\right)^2},  \quad \text{with }  v(0)=\beta . 
		\end{equation}
Integrating \eqref{e:dv2} then gives,  for all $t\in(T_0,0)$, 
		\begin{equation}\label{e:vlwbd}
			v(t)>\beta - \theta t   \quad \text{where }	\theta:= \frac{2\big(9\beta^{6}+\sqrt{9\beta^{6}+12}\,\beta^{3}\big)}{\big(\sqrt{9\beta^{6}+12}+3\beta^{3}\big)^{2}}  . 
		\end{equation}

		\underline{$(5)$ The second upper bound for $y$.}	 
	In this part, we derive an improved upper estimate for 
	$y$ by using \eqref{e:dy2}. This refined bound also shows that the differential inequality \eqref{e:dy2} guarantees that 
	$y$ remains bounded from above. 
	Recalling \eqref{e:dy2} and \eqref{e:yzdata}, we have
		\begin{equation}\label{e:dy2a}
			\dot y	>   \frac{12}{5}y^3v-3y^2   
			    \quad \text{with } y(0)=\beta  . 
		\end{equation}
		Multiplying both sides of \eqref{e:dy2a} by  	$y^{-4}$
		gives 
		\begin{equation*}
		-	\frac{1}{ 2 y }	\dot{q} 	 
		>   \frac{12}{5} \frac{v}{y} -3 q  , \quad \text{where }	q=y^{-2}  . 
		\end{equation*} 
Using \eqref{e:upbdy} and \eqref{e:vlwbd}, this implies
		\begin{equation*}
			\dot{q} 	 
			<   - \frac{24}{5} v + 6 y  q 	<   - \frac{24}{5} (\beta - \theta t ) + 6 \gamma   q  . 
		\end{equation*}
With the initial data \eqref{e:yzdata}, we obtain the differential inequality
		\begin{equation}\label{e:dq1}
			\dot{q} 	 -6 \gamma   q 
	 <   - \frac{24}{5} (\beta - \theta t )   \quad \text{with } q(0)=\beta^{-2} . 
		\end{equation}
This \eqref{e:dq1} is a \textit{linear} differential inequality, and solving it using an integrating factor yields
		\begin{equation*}
			q>e^{6 \gamma  t} \left(\frac{1}{\beta ^2}-\frac{4 \beta }{5 \gamma }+\frac{2 \theta }{15 \gamma ^2}\right)-\frac{2 (-6 \beta  \gamma +\theta +6 \gamma  \theta  t)}{15 \gamma ^2} . 
		\end{equation*}
Since $q=y^{-2}$, this gives the another upper bound
		\begin{equation*}
			y<\left(e^{6 \gamma  t} \left(\frac{1}{\beta ^2}-\frac{4 \beta }{5 \gamma }+\frac{2 \theta }{15 \gamma ^2}\right)-\frac{2 (-6 \beta  \gamma +\theta +6 \gamma  \theta  t)}{15 \gamma ^2}\right)^{-\frac{1}{2} }  . 
		\end{equation*}

Therefore, for all $t\in(T_0,0)$, 
		\begin{equation}\label{e:yupbd2}
			y<\min\left\{ \left(e^{6 \gamma  t} \left(\frac{1}{\beta ^2}-\frac{4 \beta }{5 \gamma }+\frac{2 \theta }{15 \gamma ^2}\right)-\frac{2 (-6 \beta  \gamma +\theta +6 \gamma  \theta  t)}{15 \gamma ^2}\right)^{-\frac{1}{2} },  \;\frac{\gamma}{-3\beta^3 t+1} \right\} , 
		\end{equation}
		where $\theta$ and $\gamma$ are defined in \eqref{e:theta} and \eqref{e:vlwbd}.

\underline{$(6)$ The upper bound for $z$.}		
From \eqref{e:dz2}, and using \eqref{e:upbdy}, we have
\begin{equation*}
\dot{z} >   \frac{6}{5}z^2-12yz >  -12yz > - \frac{12 \gamma z }{-3\beta^3 t+1}  \quad \text{with } z(0) = \beta^3 . 
\end{equation*} 
Integrating this inequality yields, for all $t\in(T_0,0)$, 
\begin{equation}\label{e:zupbd}
	z(t)<  \beta^3  \big(-3\beta^3 t+1\big)^{\frac{4\gamma}{\beta^3}} . 
\end{equation}

\underline{$(7)$ The upper bound for $w$.}		
From \eqref{e:dw2} and \eqref{e:yzdata}, together with   \eqref{e:upbdy} and \eqref{e:zupbd}, we obtain
\begin{equation*}
	\dot w >  -6 z w -\sqrt{6} y w> \left( -6 \beta^3  \big(-3\beta^3 t+1\big)^{\frac{4\gamma}{\beta^3}}   - \frac{\sqrt{6}\gamma}{-3\beta^3 t+1}\right)  w  \quad \text{with } w(0)=1. 
\end{equation*}
Integrating this differential inequality gives
\begin{equation*}
	w(t) < (-3\beta^3 t+1)^{\frac{\sqrt{6} \gamma}{3\beta^3}} \exp\left( \frac{2\beta^3}{4\gamma + \beta^3} \left[ (-3\beta^3 t+1)^{\frac{4\gamma + \beta^3}{\beta^3}} - 1 \right] \right) . 
\end{equation*}
Consequently,  for all $t\in(T_0,0)$, we further obtain the following upper bound for $	\phi $
\begin{equation}\label{e:phiupbd}
	\phi < \frac{\sqrt{6} \gamma}{3\beta^3} \ln(-3\beta^3 t+1) + \left( \frac{2\beta^3}{4\gamma + \beta^3} \left[ (-3\beta^3 t+1)^{\frac{4\gamma + \beta^3}{\beta^3}} - 1 \right] \right) .
\end{equation}

\underline{$(8)$ The upper bound for $p$.}		
Using \eqref{e:dp1} and \eqref{e:yzdata}, together with \eqref{e:zlwb1} and \eqref{e:upbdy}, we have
\begin{equation*}
	\dot p>\frac{1}{10} p \left(6 z -107 y \right)>\frac{1}{10} p \left(\frac{6 \beta^3}{-3\beta^3 t+1}
	 - \frac{107 \gamma}{-3\beta^3 t+1} \right) , \quad \text{with } p(0)=\beta^{\frac{11}{4}} . 
\end{equation*}
Integrating this inequality gives,  for all $t\in(T_0,0)$,  
\begin{equation}\label{e:pupbd}
	p<\beta ^{\frac{11}{4}} \left(-3\beta^3 t+1\right)^{\frac{107 \gamma }{30 \beta ^3}-\frac{1}{5}}=\beta ^{\frac{11}{4}} \left(-3\beta^3 t+1\right)^{\frac{107 \left(\sqrt{9 \beta ^6+12}+3 \beta ^3\right)}{60 \beta ^3}-\frac{1}{5}} .
\end{equation}

\underline{$(9)$ The lower bound for $y$.}		
From \eqref{e:dy1} and \eqref{e:yzdata}, together with \eqref{e:pupbd}, we have
\begin{equation*}
	\dot y< 3 y^{\frac{5}{4}} p   -y^2  < 3 y^{\frac{5}{4}} \beta ^{\frac{11}{4}} \left(-3\beta^3 t+1\right)^{\frac{107 \left(\sqrt{9 \beta ^6+12}+3 \beta ^3\right)}{60 \beta ^3}-\frac{1}{5}}  \quad \text{with } y(0) = \beta . 
\end{equation*}
Integrating this inequality yields, for all $t\in(T_0,0)$,  
\begin{equation}\label{e:ylwbd}
	y(t) > \left[ \beta^{-1/4} - \frac{\beta^{-1/4}}{4(m+1)} \left( 1 - (-3\beta^3 t+1)^{m+1} \right) \right]^{-4}>0  , 
\end{equation}
where
\begin{equation*}
	m =\frac{107 \gamma }{30 \beta ^3}-\frac{1}{5}= \frac{107 \left( \sqrt{9\beta^6 + 12} + 3\beta^3 \right)}{60 \beta^3} - \frac{1}{5} =\frac{107 \sqrt{9 \beta ^6+12}+309 \beta ^3}{60 \beta ^3} . 
\end{equation*}
 
Combining \eqref{e:philwbd}, \eqref{e:yupbd2}, \eqref{e:phiupbd}, and \eqref{e:ylwbd} completes the proof.
	\end{proof}

	\begin{lemma}
		\label{2}
		Suppose $\beta>0$. If $D_3(0)<0$, then $D_3(t) < 0$ for all $t \in (\mathcal{T}_-,0)$.
	\end{lemma}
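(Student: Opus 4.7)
The plan is to argue by contradiction using the \textit{first-hit argument} already employed in Lemmas~\ref{lemma:3.!} and \ref{lemma:3.3!!!}. Since $D_3$ is continuous on $(\mathcal{T}_-,0)$ by its definition \eqref{D_3} and Proposition~\ref{t:locext}, and since $D_3(0)<0$ by hypothesis, I will set
\[
T_{\min} := \inf\{T\in(\mathcal{T}_-,0) : D_3(t)<0 \text{ for all } t\in(T,0)\} \in [\mathcal{T}_-,0),
\]
and suppose for contradiction that $T_{\min}>\mathcal{T}_-$. Continuity then forces $D_3(T_{\min})=0$ while $D_3<0$ still holds on $(T_{\min},0)$. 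Applying Lemma~\ref{1} with $T=T_{\min}$ yields the strict positivity $H(T_{\min})>0$, which is the ingredient needed to turn the power identity into a strict sign.

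Next I would evaluate $\mathcal{P}(T_{\min})=0$ from Lemma~\ref{proposition:3.2++} using $\dot H(T_{\min}) = 3H^4 e^\phi - H^2$, which follows from $D_3(T_{\min})=0$. This choice is tailored so that the two coefficients appearing in $\mathcal{P}$ collapse: $1 - H^2 e^\phi + \frac{\dot H}{3H^2} = \frac{2}{3}$ and $1 + \frac{\dot H}{H^2} = 3H^2 e^\phi$ at $t=T_{\min}$. After dividing out the positive factor $H(T_{\min})$, the power identity reduces to
\[
\tfrac{2}{3}\dot\phi^2 + 4 H^3 e^\phi \dot\phi + 9 H^8 e^{3\phi} = 0 \quad \text{at } t=T_{\min}.
\]

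The main obstacle is that this expression is not manifestly signed: by Proposition~\ref{lemma:3.1} $\dot\phi<0$, so the middle term is negative while the other two are positive, and one might worry about an unfavorable cancellation. The crucial trick will be to eliminate $\dot\phi^2$ by means of the Hamiltonian constraint \eqref{eq:2.5++}, which gives $\dot\phi^2 = 6H^2 - 6H^3 e^\phi \dot\phi$. Substituting this identity into the displayed equation exactly cancels the indefinite $\dot\phi$-linear term and leaves $4H^2 + 9H^8 e^{3\phi} = 0$ at $t=T_{\min}$, which is impossible since $H(T_{\min})>0$. This contradicts the assumption $T_{\min}>\mathcal{T}_-$, and so $D_3<0$ on all of $(\mathcal{T}_-,0)$, completing the proof.
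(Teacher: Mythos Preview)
Your proof is correct and follows the same first-hit architecture as the paper: define $T_{\min}$, use Lemma~\ref{1} to secure $H(T_{\min})>0$, then evaluate the power identity with $D_3(T_{\min})=0$ substituted in. The reduction of $\mathcal{P}$ to $H\bigl(\tfrac{2}{3}\dot\phi^2 + 4H^3 e^\phi \dot\phi + 9H^8 e^{3\phi}\bigr)$ is exactly what the paper obtains.

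The difference lies in how the indefinite middle term is disposed of. The paper invokes Corollary~\ref{dot phi} (i.e.\ $\dot\phi<-6H^3e^\phi$) to bound $4H^3e^\phi\dot\phi>-\tfrac{2}{3}\dot\phi^2$, leaving only $9H^8e^{3\phi}>0$. You instead substitute the Hamiltonian constraint \eqref{eq:2.5++} in the form $\dot\phi^2 = 6H^2 - 6H^3e^\phi\dot\phi$, which produces an \emph{exact} cancellation and leaves $4H^2 + 9H^8 e^{3\phi}$. Your route is slightly cleaner: it avoids appealing to the branch condition $\dot\phi(0)<0$ (implicit in Corollary~\ref{dot phi}) and yields an identity rather than an inequality, so the contradiction is sharper. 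The paper's route, on the other hand, keeps everything within the lemmas already set up and reuses the same tool (Corollary~\ref{dot phi}) that drives the companion estimates.
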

	\begin{proof}
		Since $D_3(0) < 0$ and $D_3(t)$ is continuous (by the definition \eqref{D_3} and Proposition \ref{t:locext}), there exists a constant $T \in [\mathcal{T}_-, 0)$ such that $D_3(t) < 0$ for all $t \in (T, 0)$. Define  
		\begin{equation}\label{e:Tmin!+}
			T_{\text{min}} := \inf\{T \in [\mathcal{T}_-, 0) \mid D_3(t) < 0 \text{ for all } t \in (T, 0)\}.
		\end{equation}
		
		We prove $T_{\text{min}} = \mathcal{T}_-$ by contradiction. Suppose $T_{\text{min}} > \mathcal{T}_-\geq -\infty$. By the continuity of $D_3$, we must have $D_3(T_{\text{min}}) = 0$; otherwise, this  contradicts the definition of $T_{\text{min}}$ in \eqref{e:Tmin!+}.
		
		Note $H(t)>0$   for all $ t\in [T_{\text{min}},0)$ from Lemma \ref{1}. Thus, $H(T_{\text{min}})>0$.   Evaluating the power identity at $t = T_{\text{min}}$.  By Lemma \ref{proposition:3.2++}, we have $\mathcal{P}(T_{\text{min}}) = 0$, and taking $D_3(T_{\text{min}}) = 0$ into account yields 
		\begin{align}
			0=\mathcal{P}(T_{\text{min}} )=&\left. H \Biggl( \frac{2}{3} \dot{\phi}^2+4H^3\dot{\phi}e^\phi+9H^8 e^{3\phi}    \Biggr) \right|_{t=T_{\text{min}} }  \notag  \\
			\overset{\text{Cor. } \ref{dot phi}}{>}&\left. H \Biggl( \frac{2}{3} \dot{\phi}^2- \frac{2}{3} \dot{\phi}^2+9H^8 e^{3\phi}   \Biggr) \right|_{t=T_{\text{min}} }  \notag  \\
			> & 9H^9e^{3\phi}  \big|_{t=T_{\text{min}} } >0 . \label{e:0>0!!!}
		\end{align}  
This leads to the contradiction \eqref{e:0>0!!!}. 
		Hence, our assumption that $T_{\text{min}} > \mathcal{T}_-$ must be false, and we conclude that $T_{\text{min}} = \mathcal{T}_-$. This completes the proof. 
	\end{proof}

	\begin{lemma}
		\label{3}
		Suppose $\beta>0$.  If $D_4(0)>0$ and $D_3(0)<0$, then $D_4(t) > 0$ for all $t \in (\mathcal{T}_-,0)$.
	\end{lemma}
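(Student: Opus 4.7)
The plan is to mirror the structure of Lemma~\ref{2} and apply the \emph{first-hit} contradiction scheme to $D_4$ moving backward from $t=0$. Since $D_4(0) > 0$ and $D_4(t)$ is continuous (by \eqref{D_4} and Proposition~\ref{t:locext}), there exists a constant $T \in [\mathcal{T}_-, 0)$ such that $D_4(t) > 0$ for all $t \in (T, 0)$. I would then define
\begin{equation*}
T_{\text{min}} := \inf\{T \in [\mathcal{T}_-, 0) \mid D_4(t) > 0 \text{ for all } t \in (T, 0)\},
\end{equation*}
and aim to show $T_{\text{min}} = \mathcal{T}_-$ by contradiction. Suppose instead $T_{\text{min}} > \mathcal{T}_-$; continuity of $D_4$ then forces $D_4(T_{\text{min}}) = 0$, since otherwise the infimum could be pushed strictly to the left.

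Before invoking the power identity at $t = T_{\text{min}}$, I need to secure $H(T_{\text{min}}) > 0$. This is where the second hypothesis enters: from $D_3(0) < 0$ and Lemma~\ref{2}, we obtain $D_3(t) < 0$ for all $t \in (\mathcal{T}_-, 0)$. Since $T_{\text{min}}$ is finite under the standing assumption $T_{\text{min}} > \mathcal{T}_-$, Lemma~\ref{1} applies with $T = T_{\text{min}}$, yielding $H(t) > 0$ on $[T_{\text{min}}, 0)$, and in particular $H(T_{\text{min}}) > 0$. Next, solving $D_4(T_{\text{min}}) = 0$ gives $\dot{H}(T_{\text{min}}) = \tfrac{12}{5} H^4 e^\phi - 3 H^2$, which reduces the two $\dot{H}$-dependent coefficients in $\mathcal{P}$ to $1 - H^2 e^\phi + \dot{H}/(3H^2) = -\tfrac{1}{5} H^2 e^\phi$ and $1 + \dot{H}/H^2 = \tfrac{12}{5} H^2 e^\phi - 2$. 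The power identity therefore simplifies to
\begin{equation*}
\mathcal{P}(T_{\text{min}}) = -\tfrac{1}{5}\, H^3 e^\phi \bigl[\dot\phi^2 - 20 H \dot\phi - 36 H^6 e^{2\phi} + 30 H^4 e^\phi\bigr]\Big|_{t=T_{\text{min}}}.
\end{equation*}

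The main remaining step is to show that the bracketed quantity is strictly positive, so that $\mathcal{P}(T_{\text{min}}) < 0$, contradicting $\mathcal{P}(T_{\text{min}}) = 0$ from Lemma~\ref{proposition:3.2++}. Here I invoke the upper bound $\dot\phi < -6 H^3 e^\phi$ from Corollary~\ref{dot phi}, which simultaneously yields $\dot\phi^2 > 36 H^6 e^{2\phi}$ and $-20 H \dot\phi > 120 H^4 e^\phi$; the bracket is then bounded below by $150 H^4 e^\phi > 0$. I expect the main obstacle to be precisely this sign determination. Unlike the proof of Lemma~\ref{2}, where the reduced power identity collapsed into a sum of manifestly signed terms, here the expression is a genuine quadratic in $\dot\phi$ whose sign is not immediately apparent and must be controlled through Corollary~\ref{dot phi}. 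Fortunately, the one-sided estimate $\dot\phi < -6 H^3 e^\phi$ alone suffices, and no use of the lower bound on $\dot\phi$ nor completion of the square is required.
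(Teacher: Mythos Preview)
Your proposal is correct and follows essentially the same approach as the paper: the same first-hit argument, the same use of Lemma~\ref{2} and Lemma~\ref{1} to secure $H(T_{\text{min}})>0$, and the same appeal to Corollary~\ref{dot phi} (specifically $\dot\phi<-6H^3e^\phi$) to force a sign contradiction in the power identity. The only cosmetic difference is that you factor out $-\tfrac{1}{5}H^3 e^\phi$ and bound the resulting bracket, whereas the paper leaves the expression unfactored and cancels the $H^8 e^{3\phi}$ terms directly before observing that the residual $H(4H^3\dot\phi e^\phi - 6H^6 e^{2\phi})$ is negative; the underlying estimate is identical.
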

	\begin{proof}
Since $D_4(0) > 0$ and $D_4(t)$ is continuous (by Definition \eqref{D_4} and Proposition \ref{t:locext}), there exists a constant $T \in [\mathcal{T}_-, 0)$ such that $D_4(t) > 0$ for all $t \in (T, 0)$. Define  
		\begin{equation}\label{e:Tmin+}
			T_{\text{min}} := \inf\{T \in [\mathcal{T}_-, 0) \mid D_4(t) > 0 \text{ for all } t \in (T, 0)\}.
		\end{equation}
		
		We prove $T_{\text{min}} = \mathcal{T}_-$ by contradiction. Suppose $T_{\text{min}} > \mathcal{T}_-$. By the continuity of $D_4$, we must have $D_4(T_{\text{min}}) = 0$; otherwise, this   contradicts the definition of $T_{\text{min}}$ in \eqref{e:Tmin+}.

		Since $D_3(0)<0$ and $\beta>0$, by Lemma \ref{2}, we obtain $D_3(t) < 0$ for all $t \in (\mathcal{T}_-,0)$. We further use Lemma \ref{1}, then  $H(t)>0$ for all $ t\in [T_{\text{min}},0)$.  Thus, $H(T_{\text{min}})>0$.   
		
		Evaluating the power identity at $t = T_{\text{min}}$. By Lemma \ref{proposition:3.2++}, we have $\mathcal{P}(T_{\text{min}}) = 0$, Corollary \ref{dot phi} gives rise to $\dot{\phi}<-6H^3e^\phi$, thus $\dot{\phi}^2>36H^6e^{2\phi}$, and taking $D_4(T_{\text{min}}) = 0$ into account yields 
		\begin{align}
			0=\mathcal{P}(T_{\text{min}} )=&\left. H \Biggl( -\frac{1}{5}H^2 e^\phi\dot{\phi}^2+4H^3\dot{\phi}e^\phi+3H^6e^{2\phi}\biggl(\frac{12}{5}H^2e^\phi-2\biggr) \Biggr) \right|_{t=T_{\text{min}} } \notag \\
			<&\left. H (4H^3\dot{\phi}e^\phi-6H^6e^{2\phi} )  \right|_{t=T_{\text{min}} }<0
			\label{e:0>0!!}
		\end{align}  
		Note that every term in the bracket is negative and $H(T_{\text{min}})>0$, leading to the contradiction \eqref{e:0>0!!}. 
		Hence, our assumption that $T_{\text{min}} > \mathcal{T}_-$ must be false, and we conclude that $T_{\text{min}} = \mathcal{T}_-$. This completes the proof. 
	\end{proof}

	\begin{lemma}
		\label{3.5}
		Suppose $\beta>0$. If $D_5(0)>0$ and $D_3(0)<0$, then $D_5(t) > 0$ for all $t \in (\mathcal{T}_-,0)$.
	\end{lemma}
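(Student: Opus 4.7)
The plan is to mirror the first-hit contradiction scheme already used in Lemmas~\ref{2} and~\ref{3}. Since $D_5$ is continuous on $(\mathcal{T}_-,0)$ (by \eqref{D_5} and Proposition~\ref{t:locext}) and $D_5(0)>0$, I would define
\begin{equation*}
T_{\min}:=\inf\{T\in[\mathcal{T}_-,0)\mid D_5(t)>0 \text{ for all } t\in(T,0)\},
\end{equation*}
and argue by contradiction that $T_{\min}=\mathcal{T}_-$. Assuming $T_{\min}>\mathcal{T}_-$, continuity forces $D_5(T_{\min})=0$. Because $D_3(0)<0$ and $\beta>0$, Lemma~\ref{2} propagates $D_3<0$ throughout $(\mathcal{T}_-,0)$, and then Lemma~\ref{1} yields $H>0$ on $[T_{\min},0)$, in particular $H(T_{\min})>0$. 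The whole argument is then reduced to showing $\mathcal{P}(T_{\min})\neq 0$, contradicting Lemma~\ref{proposition:3.2++}.

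The central step is to substitute the relation $\dot H=H^2-3e^{-\phi}$ (from $D_5(T_{\min})=0$) into the power identity. A short computation of $1-H^2e^\phi+\dot H/(3H^2)$ and $1+\dot H/H^2$ at $T_{\min}$ yields
\begin{equation*}
\frac{\mathcal{P}(T_{\min})}{H}=\left(\frac{4}{3}-H^2e^\phi-\frac{e^{-\phi}}{H^2}\right)\dot\phi^2+4H^3\dot\phi e^\phi+6H^6e^{2\phi}-9H^4e^\phi.
\end{equation*}
The new ingredient, absent in Lemmas~\ref{2} and~\ref{3}, is the AM-GM inequality
\begin{equation*}
H^2e^\phi+\frac{e^{-\phi}}{H^2}\geq 2\sqrt{H^2e^\phi\cdot H^{-2}e^{-\phi}}=2,
\end{equation*}
which forces the coefficient of $\dot\phi^2$ to be at most $-\frac{2}{3}$. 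Combined with Corollary~\ref{dot phi}, which gives $\dot\phi<-6H^3e^\phi<0$ and hence $\dot\phi^2>36H^6e^{2\phi}$ and $4H^3\dot\phi e^\phi<-24H^6e^{2\phi}$, this produces
\begin{equation*}
\frac{\mathcal{P}(T_{\min})}{H}\leq -\frac{2}{3}\dot\phi^2+4H^3\dot\phi e^\phi+6H^6e^{2\phi}-9H^4e^\phi<-42H^6e^{2\phi}-9H^4e^\phi<0,
\end{equation*}
which, together with $H(T_{\min})>0$, gives the desired contradiction $\mathcal{P}(T_{\min})<0$.

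The main obstacle I expect is precisely identifying the algebraic manipulation that pins down a definite sign in the expression for $\mathcal{P}(T_{\min})/H$. In Lemmas~\ref{2} and~\ref{3} the sign was determined by simple polynomial identities built into the definitions of $D_3$ and $D_4$, but for $D_5$ one needs to exploit the multiplicative duality between $H^2e^\phi$ and its inverse $H^{-2}e^{-\phi}$, which is natural given the exponential coupling but has no analogue in the quadratic theory of \cite{he2025proofssingularityfreesolutionsscalarization}. Once this AM-GM observation is made, the remainder of the argument reduces to a routine application of the template used for Lemmas~\ref{2} and~\ref{3}, with no further delicate estimates required.
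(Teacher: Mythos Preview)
Your proposal is correct and follows essentially the same route as the paper: the paper also substitutes $\dot H=H^2-3e^{-\phi}$ into the power identity, invokes the inequality $H^2e^\phi+\tfrac{1}{H^2e^\phi}>2$ to force the coefficient of $\dot\phi^2$ below $-\tfrac{2}{3}$, and then uses $\dot\phi<-6H^3e^\phi$ to conclude $\mathcal{P}(T_{\min})<0$. The only cosmetic difference is that the paper stops at $-\tfrac{2}{3}\dot\phi^2-18H^6e^{2\phi}-9H^4e^\phi<0$, whereas you additionally substitute $\dot\phi^2>36H^6e^{2\phi}$ to reach $-42H^6e^{2\phi}-9H^4e^\phi$; both are fine.
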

	\begin{proof}
		Since $D_5(0) > 0$ and $D_5(t)$ is continuous (by Definition \eqref{D_5} and Proposition \ref{t:locext}), there exists a constant $T \in [\mathcal{T}_-, 0)$ such that $D_5(t) > 0$ for all $t \in (T, 0)$. Define  
		\begin{equation}\label{e:Tmin+!}
			T_{\text{min}} := \inf\{T \in [\mathcal{T}_-, 0) \mid D_5(t) > 0 \text{ for all } t \in (T, 0)\}.
		\end{equation}
		
		We prove $T_{\text{min}} = \mathcal{T}_-$ by contradiction. Suppose $T_{\text{min}} > \mathcal{T}_-$. By the continuity of $D_5$, we must have $D_5(T_{\text{min}}) = 0$; otherwise, this   contradict the definition of $T_{\text{min}}$ in \eqref{e:Tmin+!}.

		Since $D_3(0)<0$ and $\beta>0$, by Lemma \ref{2}, we obtain $D_3(t) < 0$ for all $t \in (\mathcal{T}_-,0)$. We further use Lemma \ref{1}, then  $H(t)>0$ for all $ t\in [T_{\text{min}},0)$.  Thus, $H(T_{\text{min}})>0$.

		Evaluating the power identity at $t = T_{\text{min}}$. By Lemma \ref{proposition:3.2++}, we have $\mathcal{P}(T_{\text{min}}) = 0$ and Corollary \ref{dot phi} gives rise to $\dot{\phi}<-6H^3e^\phi$.  Noting $H^2e^\phi+\frac{1}{H^2e^\phi}>2$ and taking $D_5(T_{\text{min}}) = 0$ into account yields 
		\begin{align}
			0=\mathcal{P}(T_{\text{min}} )=&\left. H \Biggl( \biggl(\frac{4}{3}-H^2e^\phi-\frac{1}{H^2e^\phi}\biggr)\dot{\phi}^2+4H^3\dot{\phi}e^\phi+3H^6e^{2\phi}\biggl(2-\frac{3}{H^2e^\phi}\biggr)\Biggr) \right|_{t=T_{\text{min}} } \notag \\
			<&\left. H \Biggl(-\frac{2}{3}\dot{\phi}^2-18 H^6e^{2\phi}-9H^4e^\phi\Biggr)  \right|_{t=T_{\text{min}} }<0 . 
			\label{e:0>0!!!!}
		\end{align}  
	Note that every term in the bracket is negative and $H(T_{\text{min}})>0$, leading to the contradiction \eqref{e:0>0!!!!}. 
		Hence, our assumption that $T_{\text{min}} > \mathcal{T}_-$ must be false, and we conclude that $T_{\text{min}} = \mathcal{T}_-$. This completes the proof. 
	\end{proof}

	\begin{lemma} \label{4}
		Under the initial conditions \eqref{eq:2.12!!}--\eqref{eq:2.11!!}, $D_3(0) < 0$ and $D_4(0) > 0$, $D_5(0) > 0$.
	\end{lemma}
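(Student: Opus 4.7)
The plan is a direct algebraic computation at $t=0$, in the same spirit as Lemma \ref{t:dataB!}. At $t=0$ we have $H=\beta$, $\phi=\alpha=0$, $e^\phi=1$, and the negative branch of \eqref{e:dotphi} selected by \eqref{eq:2.11!!} fixes
\begin{equation*}
\Psi:=\dot\phi(0)=-3\beta^3-\sqrt{9\beta^6+6\beta^2}<0,
\end{equation*}
which automatically satisfies the Hamiltonian relation $\Psi^2=6\beta^2-6\beta^3\Psi$ coming from \eqref{eq:2.5++}. Substituting these values into \eqref{eq-dotH} expresses $\dot H(0)$ as a rational function of $\beta$ and $\Psi$ whose denominator $2-2\beta\Psi+3\beta^4$ is strictly positive by Lemma \ref{t:Dvalue}.

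After clearing this positive denominator, each of $D_3(0)<0$, $D_4(0)>0$, $D_5(0)>0$ reduces to a polynomial inequality in $\beta$ and $\Psi$. The key structural observation is that in all three cases the resulting expression is quadratic in $\Psi$ with leading term exactly $(\beta^2-1)\Psi^2$; using the Hamiltonian identity to eliminate $\Psi^2$ collapses the expression to the linear form
\begin{equation*}
N_i(\beta,\Psi)=P_i(\beta)+Q_i(\beta)\Psi,\qquad i=3,4,5.
\end{equation*}
For $D_3$ all $\Psi$-dependence cancels miraculously and one is left with $N_3=-4\beta^2-9\beta^8<0$, giving $D_3(0)<0$. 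For $D_4$ and $D_5$, a direct expansion yields $Q_i(\beta)<0$ while $P_i(\beta)>0$ on the interval $\beta^2<1/6$; since $\Psi<0$, the product $Q_i(\beta)\Psi$ is then also positive, so $N_i>0$ and hence $D_i(0)>0$.

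The only real obstacle is bookkeeping: one must carry out the $\Psi^2$-elimination carefully and verify that the residual $\beta$-polynomials, which I expect to take the form $P_4(\beta)=\tfrac{6}{5}\beta^4(1+5\beta^2-6\beta^4)$ and $P_5(\beta)=6-8\beta^2+15\beta^4-6\beta^6$, are positive on $\beta^2<1/6$. The former uses $6\beta^4<1$, a consequence of $\beta^2<1/6$; the latter is dominated by the constant $6$, and moreover the factor $3\beta^4-2\beta^2+3$ appearing in $Q_5$ is a positive quadratic in $\beta^2$ (discriminant $4-36<0$). No new idea beyond that of Lemma \ref{t:dataB!} is required; the restriction $\beta\in(0,\sqrt{6}/6)$ is precisely the threshold that keeps the $P_i$ positive, while the sign of $\Psi$ takes care of the rest.
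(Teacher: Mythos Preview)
Your proposal is correct and follows precisely the approach the paper indicates: a direct algebraic computation at $t=0$ in the style of Lemma~\ref{t:dataB!}, which the paper itself omits (``we omit the detail here''). Your use of the Hamiltonian identity $\Psi^2=6\beta^2-6\beta^3\Psi$ to reduce each $D\cdot D_i(0)$ to a linear expression $P_i(\beta)+Q_i(\beta)\Psi$ is clean, and the explicit forms $N_3=-4\beta^2-9\beta^8$, $P_4=\tfrac{6}{5}\beta^4(1+5\beta^2-6\beta^4)$, $Q_4=-\tfrac{6}{5}\beta^5-4\beta^3$, $P_5=6-8\beta^2+15\beta^4-6\beta^6$, $Q_5=-2\beta(3\beta^4-2\beta^2+3)$ check out, with the sign claims on $(0,\sqrt{6}/6)$ holding for the reasons you give.
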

	
	\begin{proof}
		This proof can be derived by direct computation as the proof of Lemma \ref{t:dataB!}, we omit the detail here.
	\end{proof}

	\begin{proposition}\label{proposition:8}
		Under the initial conditions \eqref{eq:2.12!!}--\eqref{eq:2.11!!},  $H$ satisfies
		\begin{align*}
			H(t)< & \min\left\{ \left(e^{6 \gamma  t} \left(\frac{1}{\beta ^2}-\frac{4 \beta }{5 \gamma }+\frac{2 \theta }{15 \gamma ^2}\right)-\frac{2 (-6 \beta  \gamma +\theta +6 \gamma  \theta  t)}{15 \gamma ^2}\right)^{-\frac{1}{2} },  \;\frac{\gamma}{-3\beta^3 t+1} \right\}   , \\
			H(t)> &   \left[ \beta^{-1/4} - \frac{\beta^{-1/4}}{4(m+1)} \left( 1 - (-3\beta^3 t+1)^{m+1} \right) \right]^{-4}>0    , 
		\end{align*}
		and $\phi$ satisfies
		\begin{equation*} 
			\ln (-3\beta^3t+1)^2 < \phi < \frac{\sqrt{6} \gamma}{3\beta^3} \ln(-3\beta^3 t+1) +   \frac{2\beta^3}{4\gamma + \beta^3} \left[ (-3\beta^3 t+1)^{\frac{4\gamma + \beta^3}{\beta^3}} - 1 \right] 
		\end{equation*} 
		for $t\in (\mathcal{T}_-,0)$, 	where
		\begin{equation*}
			\theta:= \frac{2\big(9\beta^{6}+\sqrt{9\beta^{6}+12}\,\beta^{3}\big)}{\big(\sqrt{9\beta^{6}+12}+3\beta^{3}\big)^{2}} , \quad 			\gamma =  \frac{3\beta^3+\sqrt{9\beta^6+12}}{2} \AND 	m =\frac{107 \sqrt{9 \beta ^6+12}+309 \beta ^3}{60 \beta ^3}  . 
		\end{equation*} 
	\end{proposition}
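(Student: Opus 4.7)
The plan is to recognize that Proposition \ref{proposition:8} is essentially the synthesis of the lemmas proven immediately above it, namely Lemmas \ref{1}, \ref{2}, \ref{3}, \ref{3.5}, \ref{4}, and \ref{t:Hphibd}. The idea is to first verify the sign conditions of the auxiliary quantities $D_3,D_4,D_5$ at the initial time, propagate these signs backward in time to cover the entire maximal interval $(\mathcal{T}_-, 0)$ via the first-hit arguments already established, and finally invoke the hierarchical estimates machinery of Lemma \ref{t:Hphibd} to get the explicit bounds.

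More concretely, I would proceed as follows. First, by Lemma \ref{4}, the initial data \eqref{eq:2.12!!}--\eqref{eq:2.11!!} ensure that $D_3(0)<0$, $D_4(0)>0$, and $D_5(0)>0$. Next, combining the assumption $\beta>0$ with Lemma \ref{2} yields $D_3(t)<0$ for all $t\in(\mathcal{T}_-,0)$. Using this together with $D_4(0)>0$ and invoking Lemma \ref{3} gives $D_4(t)>0$ for all $t\in(\mathcal{T}_-,0)$, and analogously Lemma \ref{3.5} delivers $D_5(t)>0$ on the same interval. Having propagated all three sign conditions globally backward, I would then apply Lemma \ref{t:Hphibd}: for any fixed $T_0\in(\mathcal{T}_-,0)$, the three sign conditions hold on $(T_0,0)$, so the lemma produces the claimed explicit bounds on $H(t)$ and $\phi(t)$ on $(T_0,0)$. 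Since the constants $\gamma,\theta,m$ depend only on $\beta$ (not on $T_0$), and since $T_0$ is arbitrary in $(\mathcal{T}_-,0)$, the same bounds hold on the whole interval $(\mathcal{T}_-,0)$.

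The argument is mostly bookkeeping once the lemmas are in hand, so I do not anticipate any serious obstacle at this stage; the entire nontrivial analytic work has already been invested into Lemma \ref{t:Hphibd} (the hierarchical estimate scheme) and into the first-hit/power-identity contradictions behind Lemmas \ref{2}--\ref{3.5}. The one point worth double-checking is that the application of Lemma \ref{t:Hphibd} is legitimate on the open interval $(\mathcal{T}_-,0)$ rather than only on a subinterval $(T_0,0)$ bounded away from $\mathcal{T}_-$; this is handled by the observation that the bounds are $T_0$-independent, so letting $T_0$ approach $\mathcal{T}_-$ extends the estimates to the full maximal backward interval of existence. With this remark noted, combining the four displayed bounds stated in Lemma \ref{t:Hphibd} yields verbatim the conclusion of Proposition \ref{proposition:8}.
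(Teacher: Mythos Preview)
Your proposal is correct and matches the paper's own proof, which simply states that the result follows directly from Lemmas \ref{t:Hphibd}--\ref{4}. Your explicit unpacking of the logical chain (Lemma \ref{4} for the initial signs, Lemmas \ref{2}, \ref{3}, \ref{3.5} to propagate them over $(\mathcal{T}_-,0)$, then Lemma \ref{t:Hphibd} for the bounds, with the $T_0$-independence remark to pass to the full interval) is exactly the intended synthesis.
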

	\begin{proof}
		This proof follows directly from Lemmas \ref{t:Hphibd}--\ref{4}.
	\end{proof}

	\section{Proof of the Main Theorem}\label{sec:4}
    Now we are in a position to prove the main theorem \ref{theorem:2.2}. 
    \begin{proof}[Proof of Theorem \ref{theorem:2.2}] 
	The argument proceeds in two steps. First, we recall the local estimates established in \S\ref{sec:3}. Then, we extend these estimates to obtain the desired global result.

   \underline{The local bounds:} In Proposition \ref{t:locext}, we established the local existence of solutions to the system \eqref{eq:2.5}–\eqref{eq:2.6}, together with bounds for $H$ and $\phi$ on the interval $(\mathcal{T}_-,\mathcal{T}_+)$ corresponding to the initial data \eqref{initial-data!!}–\eqref{eq:2.11!!}. 
For convenience, we recall and collect here all estimates for $H$ and $\phi$  on $t \in (0,\mathcal{T}_+)$ given in Propositions \ref{proposition:5}, \ref{proposition:6} and  \ref{proposition:phi2} here, 
			\begin{equation*}
				\frac{1}{5t+\frac{1}{\beta}} < H(t) < \frac{1}{\frac{1}{2}t+\frac{1}{\beta}}, 
			\end{equation*}
			\begin{equation*}
						-(12\beta^2+2\sqrt{6})\ln{\biggl(\frac{1}{2}\beta t+1\biggr)}<\phi(t)<-\ln{\left(1+\frac{3}{5}\left(\beta^2-\frac{1}{(5t+\frac{1}{\beta})^2}\right)\right)}.
			\end{equation*} 
In addition, the corresponding bounds for  $H$ and $\phi$ on $t\in(\mathcal{T}_-,0)$ are given in Proposition \ref{proposition:8}.

		\underline{Extensions of solutions:} Let us now prove $\mathcal{T}_- = -\infty$ and $\mathcal{T}_+ = +\infty$ by contradiction. 	
		We first assume, for contradiction, that $\mathcal{T}_+ < +\infty$, and focus on the solution on $(0,\mathcal{T}_+)$. 
		Using the bounds for $H$ and $\phi$ established above, together with Corollary \ref{dot phi}, we obtain
		\begin{equation*}
			|\dot{\phi}|<6H^3e^\phi+\sqrt{6}H<(6\beta^2+\sqrt{6})\beta. 
		\end{equation*}
		Thus there exists a constant
		\begin{equation*}
			R>\max\left\{   \beta,       (12\beta^2+2\sqrt{6})\ln{\left(\frac{1}{2}\beta \mathcal{T}_++1\right)},   (6\beta^2+\sqrt{6})\beta    \right\}>0, 
		\end{equation*}
		such that $\mathcal{U}:=\p{H,\phi,\Phi}^T\in B_R(0) \subset \Rbb^3$ for all $t\in(0,\mathcal{T}_+)$, where $B_R(0)$ denotes the open ball centered at the origin with radius $R$.

		Recall that the system \eqref{e:locsys} can be written as the ODE:
		\begin{equation*}
			\frac{d}{dt} \mathcal{U}= \mathcal{F}(\mathcal{U})
		\end{equation*}
		where  
		\begin{equation*}
			\mathcal{F}(\mathcal{U}):= \p{ F_1(H,\phi,\Phi) \\
				\Phi \\ F_2(H,\phi,\Phi)  }
		\end{equation*}
		and $F_1$ and $F_2$ are defined in \eqref{e:F1} and \eqref{e:F2}, respectively. Since $\mathcal{F}\in C^1(\overline{B_R(0)}, \Rbb^3)$, it is Lipschitz continuous and bounded on $\overline{B_R(0)}$. 
		Considering Corollary~\ref{t:contthm2}, the solution $\mathcal{U}$ can therefore be continued to the right passing through the point $\mathcal{T}_+$. This contradicts the assumption that $\mathcal{T}_+$ is the maximal time of existence. Hence, we conclude that $\mathcal{T}_+ = +\infty$. 
		
A similar argument applies backward in time.
By Proposition \ref{proposition:8}, $H$ is bounded above by a constant $\gamma$ on  $(\mathcal{T}_-,0)$. In addition, $\phi$ has a constant upper bound
\begin{equation*}
	R_0:=\frac{\sqrt{6} \gamma}{3\beta^3} \ln(-3\beta^3 \mathcal{T}_-+1) +   \frac{2\beta^3}{4\gamma + \beta^3} \left[ (-3\beta^3 \mathcal{T}_-+1)^{\frac{4\gamma + \beta^3}{\beta^3}} - 1 \right] 
\end{equation*}
on this interval as well. Choose
		\begin{equation*}
			R>\max\left\{\gamma,    R_0, 6 \gamma^3 e^{R_0}+\sqrt{6}\gamma \right\}>0,
		\end{equation*}
and applying the same continuation argument yields $\mathcal{T}_-=-\infty$.     
This completes the proof.  
	\end{proof}

	\appendix

\section{Basic ode theorems}
	\label{app:A}
	This appendix presents fundamental existence and comparison theorems employed in the paper. The proofs are omitted here as they can be found in standard ODE textbooks such as \cite{Hsu2013,you1982}. 
	\begin{theorem}[Picard-Lindelöf Theorem]
		\label{theorem:PL}
		Consider a closed domain $ D \subseteq \mathbb{R} \times \mathbb{R}^n $ containing the point $ (t_0, y_0) $, and let $ f: D \to \mathbb{R}^n $ be a continuous function that is Lipschitz continuous in $y$ with constant $L$. Then the initial value problem
		\begin{equation}\label{e:ode1}
			\frac{dy}{dt} = f(t, y), \quad y(t_0) = y_0
		\end{equation}
		admits a unique $C^1$ solution $ y(t) $ defined on some interval $ [t_0 - h, t_0 + h] $ with $ h > 0 $.
	\end{theorem}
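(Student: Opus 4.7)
The plan is to reformulate the initial value problem as a fixed-point equation for an integral operator and then apply the Banach contraction mapping principle. First I would observe that, by the Fundamental Theorem of Calculus, a function $y \in C^1$ solves \eqref{e:ode1} on an interval $I \ni t_0$ if and only if $y$ is continuous on $I$ and satisfies the Volterra integral equation
\[
y(t) = y_0 + \int_{t_0}^{t} f(s, y(s))\, ds.
\]
Note that a mere continuous solution to this integral equation is automatically $C^1$, since $s \mapsto f(s, y(s))$ is continuous and antiderivatives of continuous functions are $C^1$. Thus it suffices to produce a unique fixed point of the Picard operator $T[y](t) := y_0 + \int_{t_0}^{t} f(s, y(s))\, ds$.

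Next I would choose a closed rectangle $R := [t_0 - a, t_0 + a] \times \overline{B_b(y_0)} \subset D$ with $a, b > 0$, and set $M := \sup_R \|f\|$, which is finite because $f$ is continuous on the compact set $R$. Then I pick $h > 0$ small enough that $h \leq a$, $Mh \leq b$, and $Lh < 1$. On the Banach space $X := C([t_0 - h, t_0 + h], \mathbb{R}^n)$ equipped with the sup norm, consider the closed nonempty subset
\[
X_b := \{ y \in X : \|y(t) - y_0\| \leq b \text{ for all } t \in [t_0 - h, t_0 + h]\}.
\]
The estimate $\|T[y](t) - y_0\| \leq Mh \leq b$ shows $T(X_b) \subseteq X_b$, while the Lipschitz condition yields
\[
\|T[y_1](t) - T[y_2](t)\| \leq L \left|\int_{t_0}^{t} \|y_1(s) - y_2(s)\|\, ds\right| \leq Lh \, \|y_1 - y_2\|_\infty,
\]
so $T$ is a strict contraction on $X_b$ with contraction constant $Lh < 1$. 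The Banach fixed point theorem then produces a unique $y \in X_b$ with $T[y] = y$, which is the desired solution, and its $C^1$ regularity follows from the observation in the first paragraph.

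The only genuine subtlety, which I expect to be the main obstacle to state cleanly rather than technically difficult, is the simultaneous calibration of the constants $a$, $b$, $h$ so that (i) the rectangle $R$ fits inside the domain $D$, (ii) the operator preserves the ball $X_b$, and (iii) the contraction condition $Lh < 1$ holds; all three are automatic for sufficiently small $h$, but must be recorded explicitly. A secondary issue is that the fixed-point argument a priori gives uniqueness only within $X_b$; to upgrade to uniqueness among all continuous (and a fortiori all $C^1$) solutions on $[t_0 - h, t_0 + h]$, I would append a short Grönwall argument: if $y$ and $\tilde y$ both solve \eqref{e:ode1} with the same initial data, then $u(t) := \|y(t) - \tilde y(t)\|$ satisfies $u(t) \leq L \left|\int_{t_0}^{t} u(s)\, ds\right|$, and Grönwall's inequality forces $u \equiv 0$.
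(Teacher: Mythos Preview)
Your proposal is the standard Picard iteration/contraction-mapping proof and is correct as written. Note, however, that the paper does not actually prove this statement: it is quoted in the appendix as a classical result with the proof explicitly omitted and a reference to standard ODE textbooks, so there is no paper proof to compare against. Your argument is precisely the textbook proof those references would supply.
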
 
	\begin{theorem}[Comparison Theorem]
		\label{theorem:C}
		Let $ f(t,x) $ and $ F(t,x) $ be continuous scalar functions defined on a planar region $ D $, satisfying
		\begin{equation*}
			f(t,x) < F(t,x), \quad (t,x) \in D. 
		\end{equation*}
		If $ x = \varphi(t) $ and $ x = \Phi(t) $ are solutions to the differential equations
		\begin{equation*}
			x' = f(t,x) \quad \text{and} \quad x' = F(t,x), 
		\end{equation*}
		respectively, that both pass through the point $ (\tau, \xi) \in D $, then the following holds:
		\begin{enumerate}[leftmargin=*]
			\item[(1)] $ \varphi(t) < \Phi(t) $ for $ t > \tau $ within their common interval of existence;
			\item[(2)] $ \varphi(t) > \Phi(t) $ for $ t < \tau $ within their common interval of existence.
		\end{enumerate}
	\end{theorem}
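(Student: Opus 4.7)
The plan is to reduce both claims to sign properties of the difference $\psi(t) := \Phi(t) - \varphi(t)$ on the common interval of existence. Since both solutions pass through $(\tau,\xi)$, one has $\psi(\tau)=0$, and at $t=\tau$ the strict inequality $f<F$ evaluated at $(\tau,\xi)$ gives
\[
\psi'(\tau) \;=\; F(\tau,\xi) - f(\tau,\xi) \;>\; 0.
\]
Because $\psi\in C^1$, a first-order Taylor expansion at $\tau$ shows that there exists $\delta>0$ with $\psi(t)>0$ on $(\tau,\tau+\delta)$ and $\psi(t)<0$ on $(\tau-\delta,\tau)$. This establishes the two conclusions locally near $\tau$.

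To propagate (1) to the full common interval I would run a first-hit contradiction argument in the same spirit as those used repeatedly in Section~\ref{sec:3}. Suppose for contradiction that the conclusion of (1) fails, so there exists some $t^\ast>\tau$ in the common interval of existence with $\varphi(t^\ast)\geq \Phi(t^\ast)$, i.e.\ $\psi(t^\ast)\leq 0$. Combined with $\psi>0$ on $(\tau,\tau+\delta)$, continuity of $\psi$ and the intermediate value theorem produce a smallest time
\[
t_1 \;:=\; \inf\bigl\{\, t\in(\tau,t^\ast] \,:\, \psi(t)=0 \,\bigr\},
\]
with $t_1>\tau$, $\psi(t_1)=0$, and $\psi(t)>0$ for all $t\in(\tau,t_1)$. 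At $t_1$ the two curves meet at the common value $x_1:=\varphi(t_1)=\Phi(t_1)$, so $(t_1,x_1)\in D$, and therefore
\[
\psi'(t_1) \;=\; F(t_1,x_1) - f(t_1,x_1) \;>\; 0.
\]
On the other hand, since $\psi(t)>0$ on $(\tau,t_1)$ and $\psi(t_1)=0$, the one-sided difference quotient from the left satisfies $\psi'(t_1)\leq 0$, a contradiction. Hence no such $t^\ast$ exists, and $\varphi(t)<\Phi(t)$ for all $t>\tau$ in the common interval.

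Part (2) is entirely symmetric: assume for contradiction some $t^\ast<\tau$ with $\psi(t^\ast)\geq 0$; set
\[
t_1 \;:=\; \sup\bigl\{\, t\in[t^\ast,\tau) \,:\, \psi(t)=0 \,\bigr\},
\]
so that $t_1<\tau$, $\psi(t_1)=0$, and $\psi(t)<0$ on $(t_1,\tau)$. Evaluating $f<F$ at the common point $(t_1,\varphi(t_1))$ gives $\psi'(t_1)>0$, but $\psi(t)<0$ immediately to the right of $t_1$ with $\psi(t_1)=0$ forces the right-hand difference quotient to be $\leq 0$, another contradiction.

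The argument uses nothing beyond continuity of $f$ and $F$, the fact that $\varphi,\Phi\in C^1$ on the common interval (guaranteed by Theorem~\ref{theorem:PL} under the implicit hypothesis that the two solutions exist on a common interval), and the intermediate value theorem. There is no real technical obstacle; the only point that requires care is verifying that the candidate first-hit time $t_1$ is a genuine interior point of the common interval so that $\psi'(t_1)$ is defined and the strict inequality $f(t_1,x_1)<F(t_1,x_1)$ is applicable, which follows from $(t_1,x_1)\in D$ by continuity. The Lipschitz hypothesis underlying Theorem~\ref{theorem:PL} is not needed here, since the comparison statement concerns two given solutions rather than uniqueness.
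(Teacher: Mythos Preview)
Your argument is correct and is the standard textbook proof of the comparison theorem. The paper itself does not give a proof of Theorem~\ref{theorem:C}; it explicitly omits it, referring to standard ODE references (see the sentence preceding Theorem~\ref{theorem:PL} in Appendix~\ref{app:A}). So there is nothing to compare against beyond noting that your first-hit contradiction is precisely the classical approach one finds in those references. One small remark: you do not need to invoke Theorem~\ref{theorem:PL} to get $\varphi,\Phi\in C^1$; this follows directly from $\varphi'=f(t,\varphi)$ with $f$ continuous and $\varphi$ continuous, and likewise for $\Phi$.
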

	
	\begin{theorem}[Continuation of solutions]\label{t:contthm1}
		Let $f\in C(D)$ and assume $|f(t,y)|\leq M$ for some constant $M>0$ and all $(t,y) \in D$. If $\phi$ is a solution of \eqref{e:ode1} on the interval $J=(a,b)$, then
		
		$(1)$ the limits $\lim_{t\rightarrow a+} \phi(t)=\phi(a)$ and $\lim_{t\rightarrow b-} \phi(t)=\phi(b)$ exist and are finite;
		
		$(2)$ if $(b,\phi(b)) \in D$, then the solution $\phi$ can be extended to the right beyond $t=b$.
	\end{theorem}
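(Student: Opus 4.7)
The plan is to handle the two parts sequentially: first derive a uniform Lipschitz bound on $\phi$ from the uniform bound on $f$ and use it to produce the boundary limits in part (1); then paste a local solution onto the continuously-extended $\phi$ to obtain the continuation in part (2).

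For part (1), I would start from $\phi'(t)=f(t,\phi(t))$ on $(a,b)$, which combined with $|f|\leq M$ yields $|\phi'(t)|\leq M$ throughout the interval. Integrating $\phi'$ between two points and taking norms gives the uniform estimate $|\phi(t_1)-\phi(t_2)|\leq M|t_1-t_2|$ for all $t_1,t_2\in(a,b)$, so $\phi$ is $M$-Lipschitz. Consequently, for any sequence $t_n\to b^-$ the vectors $\phi(t_n)$ form a Cauchy sequence in Euclidean space and converge; the same Lipschitz bound forces the limit to be independent of the sequence chosen, so $\phi(b):=\lim_{t\to b^-}\phi(t)$ exists and is finite. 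The existence of $\phi(a):=\lim_{t\to a^+}\phi(t)$ follows by the identical argument at the left endpoint.

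For part (2), I would first extend $\phi$ continuously to the half-open interval $(a,b]$ using the limit produced in part (1). Since $(b,\phi(b))\in D$ by hypothesis and $f$ is continuous at this point, a local existence theorem (Peano's theorem in the purely continuous case, or Theorem \ref{theorem:PL} under the Lipschitz assumption) supplies a $C^1$ solution $\psi$ of $\psi'=f(t,\psi)$ on some interval $[b,b+h]$ with $\psi(b)=\phi(b)$. Define the concatenation
\begin{equation*}
\tilde\phi(t):=\begin{cases}\phi(t), & t\in(a,b],\\ \psi(t), & t\in[b,b+h],\end{cases}
\end{equation*}
which is continuous on $(a,b+h]$ and solves the ODE on $(a,b)\cup(b,b+h]$.

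The one step worth checking with some care is that $\tilde\phi$ is actually $C^1$ at the junction $t=b$ with derivative $f(b,\phi(b))$, which is what I would regard as the main (and only mildly subtle) obstacle. The right derivative is $\psi'(b)=f(b,\psi(b))=f(b,\phi(b))$ by construction. For the left derivative, continuity of $f$ at $(b,\phi(b))$ together with $\phi'(t)=f(t,\phi(t))$ on $(a,b)$ gives $\lim_{t\to b^-}\phi'(t)=f(b,\phi(b))$; combined with the Lipschitz extension of $\phi$ to $(a,b]$, a standard mean-value argument on $(\phi(t)-\phi(b))/(t-b)$ identifies the left derivative of $\tilde\phi$ at $b$ with $f(b,\phi(b))$. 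The two one-sided derivatives agreeing, $\tilde\phi\in C^1((a,b+h])$ and solves the ODE on all of $(a,b+h]$, completing the extension strictly past $b$.
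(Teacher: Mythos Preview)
The paper does not supply its own proof of this theorem; it is stated in Appendix~\ref{app:A} with the remark that proofs ``are omitted here as they can be found in standard ODE textbooks,'' so there is no in-paper argument to compare against. Your proposal is correct and is precisely the standard textbook proof: the uniform bound on $f$ yields Lipschitz continuity of $\phi$ and hence existence of the endpoint limits, after which a local solution through $(b,\phi(b))$ (via Peano, since only continuity of $f$ is assumed) is glued on and the $C^1$ join is verified by matching one-sided derivatives.
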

	
	\begin{corollary}[Continuation principle]\label{t:contthm2}
		Let $f\in C(D)$. Suppose $\phi$ is a solution of \eqref{e:ode1} on the interval $J=(a,b)$, and there exists a finite constant $M>0$ such that for every $t\in (a,b)$,
		\begin{equation*}
			|f(t,\phi(t))|\leq M<+\infty,
		\end{equation*}
		then the solution $\phi$ can be continued to the right beyond $t=b$.
	\end{corollary}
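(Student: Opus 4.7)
The plan is to reduce Corollary \ref{t:contthm2} to part (2) of Theorem \ref{t:contthm1}. The essential observation is that the only role of the global bound $|f(t,y)| \leq M$ on $D$ in Theorem \ref{t:contthm1} is to force the existence of the one-sided endpoint limit of $\phi$; this existence requires only the boundedness of $f$ along the specific solution trajectory, which is exactly what the corollary assumes.

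First, I would use the ODE \eqref{e:ode1} together with $|f(t,\phi(t))| \leq M$ to conclude $|\phi'(t)| \leq M$ on $(a,b)$, and then integrate to obtain the Lipschitz estimate
\begin{equation*}
|\phi(t_2) - \phi(t_1)| \leq M\,|t_2 - t_1|, \quad \text{for all } t_1, t_2 \in (a,b).
\end{equation*}
By the Cauchy criterion, the limit $\phi(b^-) := \lim_{t \to b^-} \phi(t)$ exists, so setting $\phi(b) := \phi(b^-)$ extends $\phi$ continuously to $(a,b]$. Once $(b,\phi(b)) \in D$ is verified, part (2) of Theorem \ref{t:contthm1} produces a local solution through $(b,\phi(b))$ on some interval $[b, b+\delta)$, and splicing it onto $\phi$ on $(a,b]$ yields the desired continuation.

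The only real subtlety---and the step I expect to be the main obstacle---is verifying that $(b,\phi(b^-)) \in D$. For a generic domain this need not hold, since the trajectory could in principle approach the topological boundary of $D$ as $t \uparrow b$. However, $D$ is assumed closed throughout this appendix, so the continuity of $\phi$ on $(a,b]$ together with $(t,\phi(t)) \in D$ for every $t \in (a,b)$ immediately yields $(b,\phi(b^-)) \in D$ by passing to the limit. This closedness of $D$ is exactly the structural hypothesis that makes the argument go through, and in the paper's application to Theorem \ref{theorem:2.2} it is guaranteed by choosing $D$ as a cylindrical set whose phase-space factor is the closed ball $\overline{B_R(0)}$ already shown to contain the trajectory.
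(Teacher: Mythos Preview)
The paper does not supply its own proof of Corollary~\ref{t:contthm2}; the appendix explicitly states that the proofs are omitted and refers to standard ODE textbooks. Your argument is exactly the standard textbook reduction and is correct: bound $|\phi'|$ along the trajectory, extract the endpoint limit by the Cauchy criterion, verify $(b,\phi(b))\in D$, and continue. Your identification of the closedness of $D$ as the only nontrivial point is apt, and your remark that in the paper's application (proof of Theorem~\ref{theorem:2.2}) the phase-space factor is the compact ball $\overline{B_R(0)}$ is precisely how the hypothesis is met there.

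One small caveat on presentation: when you invoke ``part~(2) of Theorem~\ref{t:contthm1}'' to obtain the local continuation at $(b,\phi(b))$, note that Theorem~\ref{t:contthm1} as stated carries the \emph{global} bound $|f(t,y)|\le M$ on all of $D$ among its hypotheses, which is not assumed in the corollary. Your opening sentence already anticipates this---you correctly observe that the global bound is used only for part~(1)---but to make the logic airtight you should either (i) remark explicitly that the proof of part~(2) requires only $f\in C(D)$ and local existence (Peano) at the endpoint, so it applies verbatim here, or (ii) bypass Theorem~\ref{t:contthm1} altogether and invoke Peano (or Picard--Lindel\"of, since in the paper's application $\mathcal{F}\in C^1$) directly at $(b,\phi(b))$. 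Either fix is routine.
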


	\section{Useful inequalities}
	\label{app:B}
 
	\begin{lemma}\label{t:bscineq}
		For any $x>0$,  the following inequalities are valid:
		\begin{equation}
			\sqrt{1+x^2}<1+x,
			\label{eq:B2}
		\end{equation}
		\begin{equation}
			\sqrt{1+x^2}-x>\frac{1}{1+2x},
			\label{eq:B1}
		\end{equation}
		\begin{equation}
			\sqrt{1+x^2}>1.
			\label{eq:B3}
		\end{equation}
	\end{lemma}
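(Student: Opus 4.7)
The plan is to prove the three inequalities by elementary algebraic manipulation, observing that \eqref{eq:B2} and \eqref{eq:B1} are tightly linked through the conjugation identity for $\sqrt{1+x^2}-x$. I would first dispatch \eqref{eq:B2} and \eqref{eq:B3}, both of which reduce to squaring arguments, and then derive \eqref{eq:B1} as a short corollary of \eqref{eq:B2}.

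For \eqref{eq:B2}, since $x>0$ both $\sqrt{1+x^2}$ and $1+x$ are positive, so squaring preserves the inequality. The claim then reduces to $1+x^2<(1+x)^2=1+2x+x^2$, i.e.\ $0<2x$, which holds by hypothesis. For \eqref{eq:B3}, from $x>0$ one has $1+x^2>1$, and taking the positive square root yields $\sqrt{1+x^2}>1$ immediately.

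For \eqref{eq:B1}, I would use the standard conjugation trick
\begin{equation*}
\sqrt{1+x^2}-x=\frac{(\sqrt{1+x^2}-x)(\sqrt{1+x^2}+x)}{\sqrt{1+x^2}+x}=\frac{1}{\sqrt{1+x^2}+x}.
\end{equation*}
Since $\sqrt{1+x^2}+x>0$ and $1+2x>0$ for $x>0$, the desired bound $\sqrt{1+x^2}-x>\frac{1}{1+2x}$ is equivalent to $\sqrt{1+x^2}+x<1+2x$, i.e.\ $\sqrt{1+x^2}<1+x$, which is precisely \eqref{eq:B2} already established.

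Since these are classical inequalities, there is no genuine obstacle; the only care needed is to justify the squaring step in \eqref{eq:B2} by positivity of both sides, and to record the equivalence in \eqref{eq:B1} through conjugation so that it follows from \eqref{eq:B2} rather than being an independent calculation.
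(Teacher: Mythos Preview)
Your proof is correct and entirely self-contained. The paper itself states this lemma without proof, so there is no argument to compare against; your squaring argument for \eqref{eq:B2} and \eqref{eq:B3} together with the conjugation identity reducing \eqref{eq:B1} to \eqref{eq:B2} is exactly the kind of elementary verification one would expect here.
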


	\section*{Acknowledgements}
	
	C.L. is partially supported by  NSFC (Grant No. $12571234$).

	\bigskip
	
	\textbf{Data Availability} Data sharing is not applicable to this article as no datasets were
	generated or analysed during the current study.
	
	\bigskip
	
	\textbf{Declarations}
	
	\bigskip
	
	\textbf{Conflict of interest} The authors declare that they have no conflict of interest.

	\bibliographystyle{amsplain}
	\bibliography{ref}

\end{document}